\documentclass{article}
\title{Some Three- and Four-Dimensional Invariants of Satellite Knots with $(1,1)$ Trefoil Patterns}
\author{Holt Bodish}
\date{}
\usepackage{amsmath,amsthm, amssymb,amsfonts}
\usepackage[alphabetic]{amsrefs}
\usepackage{graphicx}
\usepackage{mathtools}
\usepackage{mathrsfs}
\usepackage{tikz-cd} 
\usepackage[margin=1.5in]{geometry}

\theoremstyle{plain}
\newtheorem{theorem}{Theorem}[section]
\newtheorem{lemma}[theorem]{Lemma}
\newtheorem{proposition}[theorem]{Proposition}
\newtheorem{definition}[theorem]{Definition}
\newtheorem{corollary}[theorem]{Corollary}

\newtheorem{remark}[theorem]{Remark}

\newcommand{\HFhat}{\widehat{\mathrm{HF}}}
\newcommand{\CFAhat}{\widehat{\mathrm{CFA}}}
\newcommand{\txi}{\tilde{\xi}}
\newcommand{\R}{\mathbb{R}}

\newcommand{\Z}{\mathbb{Z}}

\newcommand{\F}{\mathbb{F}}

\newcommand{\CFDhat}{\widehat{\mathrm{CFD}}}
\newcommand{\HFKhat}{\widehat{\mathrm{HFK}}}

\newcommand{\teta}{\tilde{\eta}}

\newcommand{\CFKhat}{\widehat{\mathrm{CFK}}}
\newcommand{\rk}{\text{rk}}

\newcommand{\CFK}{\mathrm{CFK}}
\newcommand{\tikzcirc}[2][black,fill=white]{\tikz[baseline=-0.5ex]\draw[#1,radius=#2] (0,0) circle ;}
\newcommand{\tikzcircle}[2][black,fill=black]{\tikz[baseline=-0.5ex]\draw[#1,radius=#2] (0,0) circle ;}
\newcommand{\tikzcirclee}[2][black,fill=green]{\tikz[baseline=-0.5ex]\draw[#1,radius=#2] (0,0) circle ;}
\newcommand{\talpha}{\tilde{\alpha}}
\newcommand{\tbeta}{\tilde{\beta}}
\begin{document}
\maketitle

\begin{abstract}
We use bordered Floer homology, specifically the immersed curve interpretation of the bordered pairing theorem, to compute various three- and four-dimensional invariants of satellite knots with arbitrary companions and patterns from a family of knots in the solid torus that have the knot type of the trefoil in $S^3$. We compute the three-genus, and bound the four-genus of these satellites. We show that all patterns in this family are fibered in the solid torus. This implies that satellites with fibered companions and patterns from this family are also fibered. We also show that satellites with thin fibered companions or companions $K$ with $\tau(K)=\pm g(K)$ formed from these patterns have left or right veering monodromy. We then use this to show that satellites with thin fibered companion knots $K$ so that $|\tau(K)|<g(K)$ formed from these patterns do not have thin knot Floer homology. 
\end{abstract}

\section{Introduction}
\emph{Knot Floer} homology, introduced by Rasmussen \cite{Rasknot} and Oszv\'{a}th and Szab\'{o} \cite{OSknot}, is an invariant of null homologous knots in the three sphere. Its simplest instantiation takes the form of a bigraded Abelian group, $\HFKhat(S^3,K)\cong \bigoplus_{m,A} \HFKhat_m(S^3,K,A)$. Here $m$ is called the \emph{Maslov grading} and $A$ is called the \emph{Alexander grading}. Knot Floer homology contains information about the knot $K$ and its complement $S^3\setminus \nu(K)$. For example, it detects the three-genus \cite{genusdetection} and fiberedness of the knot \cite{surfacedecomp, Nifibered}, contains information about the monodromy of fibered knots \cite{Niveering}, bounds the number of disjoint, non-isotopic Seifert surfaces in the knot complement \cite{Seifertsurface}, and bounds the four-genus of the knot \cite{OStau}. In this note, we use these detection properties to investigate three- and four-dimensional invariants of satellite knots formed from a family of $(1,1)$-patterns. 

Recall that, given a knot $K \subset S^3$ and a pattern $P \subset S^1\times D^2$, we can construct a new knot, called the ($0$-twisted) satellite knot with \emph{companion} knot $K$ and \emph{pattern} knot $P$, denoted $P(K)$, by removing a tubular neighborhod of $K$ and gluing in the pair $(S^1\times D^2,P)$ so that $S^1\times \{pt\}$ is identified with the Seifert longitude of $K$. A pattern knot $P$ is a $(1,1)$-pattern if it admits a genus-$1$ doubly-pointed bordered Heegaard diagram, a concept that we recall in section \ref{CFA(1,1)}. 

Our main reason for restricting to $(1,1)$-patterns is computational. For an arbitrary pattern $P$, the bordered pairing theorem of \cite{LOT} expresses $\HFKhat(S^3,P(K))$ in terms two invariants: $\CFAhat(S^1\times D^2,P)$ and $\CFDhat(S^3\setminus\nu(K))$. For $(1,1)$-patterns, the work of Chen in \cite{Chen} recasts this pairing theorem in terms of Lagrangian intersection Floer homology of two curves in the punctured torus. This facilitates computation in two ways: it allows one to vary the pattern within a family and it allows one to compute the decompositon into Alexander gradings much more efficiently than with the language of the original bordered pairing theorem.

Many of the computations of knot Floer homology of satellite knots that exist in the literature involve $(1,1)$-patterns. For example the cabling patterns studied in \cite{Hom} (see also \cite{cabling}), Mazur pattens studied in \cite{Levinemazur} and \cite{petkovamazur}, and Whitehead double patterns studied in \cite{Heddenwhitehead} are all $(1,1)$-patterns. Given this, it is interesting to compute knot Floer homology of satellites where the pattern comes from a family of $(1,1)$-patterns. In \cite{Chen} this project is taken up and he examines the case where $P$ is an arbitrary $(1,1)$-pattern $P$ so that $P(U)\simeq U$, called an \emph{unknot pattern}, and the companion knot is the right or left handed trefoil. 

In the following, we use the immersed curve pairing theorem as stated in \cite{Chen} to compute the knot Floer homology of satellites with arbitrary companion knots $K$ and patterns $P$ from a specific family of $(1,1)$-patterns with the property that $P(U) \simeq T_{2,3}$. We will refer to such patterns as \emph{trefoil patterns}. In section \ref{trefoilpatterns} we introduce, for each $p>1$, a trefoil pattern denoted $P_{p,1}$ which is closely related to the $(p,1)$ unknot cabling pattern. Our goal is to investigate various three- and four-dimensional properties of the satellite knots obtained from these trefoil patterns. First, for each $p>1$ and for any knot $K$, we compute the invariant $\tau(P_{p,1}(K))$, an integer valued concordance invariant derived from the knot Floer homology package first defined by \cite{OStau}, in terms of $\tau(K)$ and $\epsilon(K)$.

\begin{theorem}\label{tauoftrefoil}
For the patterns $P_{p,1}$ and for an arbitrary companion knot $K \subset S^3$, we have

\begin{itemize}
\item If $\epsilon(K)=1$, then $\tau(P_{p,1}(K))=(p+1)\tau(K)+1$

\item If $\epsilon(K)=-1$, then $\tau(P_{p,1}(K))=(p+1)(\tau(K)+1)$

\item If $\epsilon(K)=0$, so $\tau(K)=0$, then $\tau(P_{p,1}(K))=\tau(T_{2,3})=1$. 

\end{itemize}
\end{theorem}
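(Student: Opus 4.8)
The plan is to carry out the entire computation inside Chen's immersed-curve reformulation of the bordered pairing theorem. Recall that $\CFAhat(S^1\times D^2,P_{p,1})$, coming from a genus-$1$ doubly-pointed bordered Heegaard diagram, is equivalent to an immersed curve $\gamma_{P_{p,1}}$ (carrying the trivial local system) in the once-punctured torus, and that for any companion $K$ the pairing theorem in the form of \cite{Chen} identifies $\HFKhat(S^3,P_{p,1}(K))$ with the Lagrangian intersection Floer homology of $\gamma_{P_{p,1}}$ with the immersed multicurve $\gamma_K$ of the complement $S^3\setminus\nu(K)$, the Alexander grading being recovered from a relative winding number against a fixed arc. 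The first step is therefore to read $\gamma_{P_{p,1}}$ off the diagram constructed in Section~\ref{trefoilpatterns}: it is a single immersed curve whose lift to the plane wraps with the appropriate slope --- visible directly in the diagram and responsible for the coefficient $p+1$ in the theorem --- together with a local turnback near the puncture that records the trefoil factor, so that $\gamma_{P_{p,1}}$ is a small modification of the curve of the $(p,1)$ cabling pattern. After fixing the change-of-framing identification coming from the satellite gluing (which sends $S^1\times\{pt\}$ to the Seifert longitude of $K$), one may regard $\gamma_{P_{p,1}}$ and $\gamma_K$ as living in the same punctured torus.

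The second step is to recall, following Hom \cite{Hom} and Hanselman--Rasmussen--Watson, how $\tau(K)$ and $\epsilon(K)$ are encoded in $\gamma_K$. In the coordinates in which the unknot's curve is the horizontal line at height $0$, the multicurve $\gamma_K$ has a distinguished component $\gamma_K^{0}$ which is a horizontal line at height $\tau(K)$ decorated, near the puncture, by a single clasp whose side is determined by $\epsilon(K)$ (left for $\epsilon(K)=-1$, right for $\epsilon(K)=+1$), and when $\epsilon(K)=0$ the distinguished component is isotopic rel puncture to the unknot's curve, so in particular sits at height $\tau(K)=0$. The remaining (figure-eight-type) components of $\gamma_K$ turn out not to affect $\tau(P_{p,1}(K))$: as in Hom's cabling computation, the generator of $\HFhat(S^3)\cong\F$ surviving in $\HFKhat(S^3,P_{p,1}(K))$ is supported on an intersection point of $\gamma_{P_{p,1}}$ with $\gamma_K^{0}$, and it is the Alexander grading of this distinguished intersection point that equals $\tau(P_{p,1}(K))$.

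The third step is the pairing itself. I put $\gamma_K^{0}$, in the model determined by $\tau(K)$ and $\epsilon(K)$, into minimal position with $\gamma_{P_{p,1}}$, locate the distinguished intersection point, and compute its Alexander grading from the winding-number formula. When $\epsilon(K)=1$ the right-hand clasp forces the distinguished point onto a particular strand of $\gamma_{P_{p,1}}$ and yields $(p+1)\tau(K)+1$; when $\epsilon(K)=-1$ the left-hand clasp instead drags it across $p$ further strands of the pattern curve, producing the additional $+p$ and hence $(p+1)(\tau(K)+1)$; and when $\epsilon(K)=0$ we have $\tau(K)=0$ and $\gamma_K^{0}$ is the unknot's curve, so the relevant pairing reduces to the one computing $\HFKhat(S^3,P_{p,1}(U))=\HFKhat(S^3,T_{2,3})$ and we read off $\tau(P_{p,1}(K))=\tau(T_{2,3})=1$ --- consistently with plugging $\tau(K)=0$ into the $\epsilon(K)=1$ formula.

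The main obstacle is the bookkeeping of this last step: getting the change-of-framing identification exactly right, checking that the intersection point singled out genuinely generates the image of $\HFhat(S^3)$ and is not cancelled by the differential in the paired complex, and tracking precisely how the $\epsilon(K)=\pm1$ clasps interact with the several strands of $\gamma_{P_{p,1}}$ so as to produce exactly the grading jump of $p$ between the two non-degenerate cases --- this is the point at which the two cases of the theorem diverge. A secondary subtlety is that $\tau$ is \emph{a priori} extracted from the filtered complex $\CFKhat$ rather than from $\HFKhat$; I would phrase the argument so that the distinguished generator and its Alexander grading are read directly off the immersed curves, using that these record the full filtered chain homotopy type, so that no separate $\CFK^{\infty}$-level computation is needed.
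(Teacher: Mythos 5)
Your plan follows the same strategy as the paper: work in Chen's immersed-curve reformulation of the pairing theorem, read off the essential ``non-vertical'' component $\gamma_0$ of $\alpha(K)$ from $\tau(K)$ and $\epsilon(K)$, locate the distinguished intersection point that computes $\tau(P_{p,1}(K))$, and evaluate its Alexander grading via Chen's winding-number formula (Lemma~\ref{Alex}). The case split you sketch (the $\epsilon(K)=\pm1$ ``clasps'' sitting on opposite sides of the pattern curve, producing the extra $+1$ versus $+(p+1)$) is exactly the distinction drawn in the paper's cases.

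The ``main obstacle'' you flag --- certifying that the distinguished generator genuinely survives and is supported on $\gamma_0$ rather than on a figure-eight component near the meridian, and packaging the $\tau$-extraction at the filtered level rather than just on homology --- is precisely what the paper resolves in Section~\ref{tauofpairing} by running the $z$-basepoint (Alexander-filtration) spectral sequence geometrically: one isotopes $\beta$ to a curve lying entirely in the far-right column of the pairing diagram, crossing only $z$ basepoints and recording the filtration shifts by Chen's $A$-bouys, so the sole surviving intersection point is forced to lie on the essential component. If you intend to complete the proof along your lines, you should supply that mechanism (or an equivalent substitute such as quoting a known result about the essential component dominating the $\tau$-computation for $(1,1)$ patterns); ``as in Hom's cabling computation'' gestures at the right idea but in the paper this step is carried out explicitly and is where the real content of the argument sits. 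A minor point: your description of $\gamma_0$ as ``a horizontal line at height $\tau(K)$ with a clasp'' and the paper's ``slopes through $2\tau(K)$ rows crossing at heights $\pm\tau(K)$'' are the same curve in different pegboard positions, but you will want to fix one picture and verify the winding-number count against it carefully, since the $(p+1)\tau(K)$ contribution comes from counting $\delta_{w,z}$ crossings row-by-row in the chosen representative.
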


\begin{remark}
In forthcoming work with Subhankar Dey, we show that for any knot $K$ with $\epsilon(K)=0$ and for any $(1,1)$-pattern $P$, we have $\epsilon(P(U))=\epsilon(P(K))$ and $\tau(P(U))=\tau(P(K))$, generalizing the third bullet point above.

\end{remark}

As shown in \cite{OStau}*{Corollary 1.3}, the integer $\tau(K)$ satisfies $|\tau(K)| \leq g_4(K)$, where $g_4(K)$ is the smooth four-genus of a knot (the minimal genus of a surface properly embedded in $B^4$ with boundary $K \subset S^3$). This gives the following corollary concerning the slice genus of these satellite knots.
\begin{corollary}
For any companion knot $K$ with $\tau(K)\neq -1$ and  $\epsilon(K) \neq -1$, the satellite knots $P_{p,1}(K)$ are not slice. 
\end{corollary}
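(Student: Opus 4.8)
The plan is to deduce non-sliceness directly from the $\tau$-computation in Theorem~\ref{tauoftrefoil} together with the slice-genus bound $|\tau(J)|\le g_4(J)$ of \cite{OStau}*{Corollary 1.3}. First I would record the standard observation that if a knot $J\subset S^3$ is slice then $g_4(J)=0$, so the bound forces $\tau(J)=0$; contrapositively, any knot with $\tau\neq 0$ fails to be slice. Hence it suffices to check that $\tau(P_{p,1}(K))\neq 0$ for every companion $K$ with $\tau(K)\neq -1$ and $\epsilon(K)\neq -1$, and this is a short case analysis against the three bullet points of Theorem~\ref{tauoftrefoil}.

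If $\epsilon(K)=-1$ there is nothing to do, as such $K$ are excluded by hypothesis; for completeness I would note that the formula $\tau(P_{p,1}(K))=(p+1)(\tau(K)+1)$ vanishes exactly when $\tau(K)=-1$, which is the remaining excluded value. If $\epsilon(K)=0$, then $\tau(K)=0$ and Theorem~\ref{tauoftrefoil} gives $\tau(P_{p,1}(K))=\tau(T_{2,3})=1\neq 0$. The only case requiring even a sliver of argument is $\epsilon(K)=1$, where $\tau(P_{p,1}(K))=(p+1)\tau(K)+1$: since $p>1$ we have $p+1\ge 3$, and $(p+1)\tau(K)$ is an integer multiple of $p+1$, so $(p+1)\tau(K)+1\equiv 1\not\equiv 0 \pmod{p+1}$; equivalently, $\tau(K)=-1/(p+1)$ is impossible for $\tau(K)\in\Z$. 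Thus $\tau(P_{p,1}(K))\neq 0$ in all admissible cases, and $P_{p,1}(K)$ is not slice.

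I do not expect any real obstacle here: the substance is entirely contained in Theorem~\ref{tauoftrefoil}, and the corollary is bookkeeping. The hypotheses on $K$ are precisely calibrated to exclude the two ways the formulas for $\tau(P_{p,1}(K))$ could produce $0$; in fact, since the $\epsilon(K)=1$ formula can never output $0$ by the integrality argument above, the hypothesis $\tau(K)\neq -1$ is only needed to handle $\epsilon(K)=-1$ and is already subsumed by excluding that case, so one could equally state the corollary under the single hypothesis $\epsilon(K)\neq -1$ (or, symmetrically, under ``$\epsilon(K)\neq -1$ or $\tau(K)\neq -1$''). I would keep the write-up to a few lines, emphasizing the reduction to ``$\tau\neq 0\Rightarrow$ not slice'' and then quoting the three cases.
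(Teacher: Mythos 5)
Your argument is correct and is exactly the intended one: the paper proves this corollary implicitly by combining the $\tau$-formulas of Theorem~\ref{tauoftrefoil} with the bound $|\tau|\le g_4$ from \cite{OStau}, which is precisely what you do. Your side remark that the hypothesis $\tau(K)\neq -1$ is in fact redundant once $\epsilon(K)\neq -1$ is assumed (since $(p+1)\tau(K)+1$ can never vanish for integer $\tau(K)$ and $p>1$) is also accurate.
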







Given a pattern in the solid torus, we can associate to it an integer $w(P)$, called the winding number of the pattern, by computing the algebraic intersection between the pattern $P$ and a meridional disk $\{0\}\times D^2$. Given a pattern $P$ with winding number $r$, we define a \emph{relative Seifert surface} for $P$ to be a surface $\tilde{\Sigma}$ in $S^1\times D^2$ so that the interior of $\tilde{\Sigma}$ is disjoint from $P$, and the boundary of $\tilde{\Sigma}$ consists of $P$ together with $r$ coherently oriented longitudes. A pattern is \emph{fibered} if the complement $S^1 \times D^2 \setminus \nu(P)$ is fibered over $S^1$ with fiber surface a relative Seifert surface for $P$. Furthermore, the genus of a pattern, $g(P)$, is defined to be the minimal genus of a relative Seifert surface for $P$. 

For a satellite knot $P(K)$ with a \emph{non-trivial} companion $K$ a result of Schubert \cite{schubert} shows that the three-genus of the satellite knot $g(P(K))$ can be expressed in terms of $w(P)$, $g(K)$ and $g(P)$:
\begin{equation}\label{schubertgenus}
g(P(K))=|w(P)|g(K)+g(P).
\end{equation}

This has the consequence that for any non-trivial knot $K$, the value of $g(P(K))$ is determined by the value of $g(K)$ and $g(P)$. However, $g(P)$ depends only on the pattern. Hence, we can compute $g(P)$ if we can compute the three genus of some satellite with non-trivial companion $K$ and pattern $P$, for example $P(T_{2,3})$. Using the fact that knot Floer homology detects the genus of knots, we prove

\begin{lemma}\label{patterngenus}
For any $p>1$, the trefoil patterns $P_{p,1}$ have $g(P_{p,1})=1$.
\end{lemma}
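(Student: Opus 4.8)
The plan is to combine Schubert's genus formula \eqref{schubertgenus} with a single knot Floer homology computation. Since $g(P)$ in \eqref{schubertgenus} depends only on the pattern, it suffices to compute $g(P_{p,1}(K))$ for one non-trivial companion $K$ and then solve for $g(P_{p,1})$; I will take $K = T_{2,3}$, so that $g(T_{2,3}) = 1$. Writing $w = w(P_{p,1})$ for the winding number of the pattern (computed from the diagram in Section \ref{trefoilpatterns}), formula \eqref{schubertgenus} reads $g(P_{p,1}(T_{2,3})) = |w| + g(P_{p,1})$, so the lemma is equivalent to the equality $g(P_{p,1}(T_{2,3})) = |w| + 1$.

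To evaluate $g(P_{p,1}(T_{2,3}))$ I will use the fact that knot Floer homology detects the Seifert genus \cite{genusdetection}, so that $g(P_{p,1}(T_{2,3}))$ is the top Alexander grading $A$ for which $\HFKhat(S^3, P_{p,1}(T_{2,3}), A) \neq 0$. The immersed-curve pairing theorem of \cite{Chen} computes this group, together with its Alexander grading, as the Lagrangian intersection Floer homology of the immersed multicurve $\gamma_{P_{p,1}}$ attached to $P_{p,1}$ with (a suitable lift of) the immersed curve attached to the trefoil complement in the punctured torus, the Alexander grading being recorded by intersections with a fixed meridian. Since $\gamma_{P_{p,1}}$ winds roughly $p$ times around the torus, the resulting chain complex has explicitly enumerable generators; I will locate the intersection point(s) of maximal Alexander grading and argue that, for grading reasons, the top Alexander grading is supported by a single generator — or by a subcomplex whose differential vanishes — so that its homology is nonzero. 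As a consistency check, Theorem \ref{tauoftrefoil} applied with $\epsilon(T_{2,3}) = 1$ and $\tau(T_{2,3}) = 1$ gives $\tau(P_{p,1}(T_{2,3})) = p + 2$, and the inequalities $g \geq g_4 \geq |\tau|$ then force $|w| \geq p + 1$; combined with the winding number read off in Section \ref{trefoilpatterns}, this pins everything down.

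I expect the main obstacle to be the bookkeeping in the immersed-curve pairing uniformly in $p$: one must draw $\gamma_{P_{p,1}}$ and the trefoil curve, enumerate all intersection points, and track the Alexander grading of each carefully enough to be certain that the genuinely maximal grading carries nonvanishing homology and is not killed by a differential to or from an adjacent grading. Once the top nonvanishing Alexander grading $N = g(P_{p,1}(T_{2,3}))$ has been identified, the lemma follows by substituting into \eqref{schubertgenus}, giving $g(P_{p,1}) = N - |w|$. (Alternatively, the upper bound $g(P_{p,1}) \leq 1$ could be obtained by constructing an explicit genus-one relative Seifert surface for $P_{p,1}$ from the diagram in Section \ref{trefoilpatterns}, and the lower bound $g(P_{p,1}) \geq 1$ from the observation that capping a minimal-genus relative Seifert surface with $|w|$ meridian disks in the complementary solid torus produces a genus-$g(P_{p,1})$ Seifert surface for $P_{p,1}(U) \simeq T_{2,3}$, so $g(T_{2,3}) = 1 \leq g(P_{p,1})$; but the uniform Floer computation yields both bounds at once.)
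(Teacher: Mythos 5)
Your main plan coincides with the paper's proof: use Schubert's formula \eqref{schubertgenus} with $K=T_{2,3}$, read off the top nonvanishing Alexander grading $g(P_{p,1}(T_{2,3}))=p+2$ from the immersed-curve pairing diagram (the paper's Figure \ref{generalP_pRHT}, where the single generator $a$ carries $A(a)=p+2$), and then solve $p+2=(p+1)\cdot 1+g(P_{p,1})$ using $w(P_{p,1})=p+1$. One small caveat about your ``consistency check'': from $g\geq |\tau|=p+2$ and Schubert you only get $|w|+g(P_{p,1})\geq p+2$, which does not force $|w|\geq p+1$ without already knowing $g(P_{p,1})\leq 1$ — but since this is explicitly secondary and the winding number is computed directly in the paper (via \cite{Chen}*{Lemma 6.3}), the argument as a whole is sound.
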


Now, given the value of $g(P)$, we can determine $g(P(K))$ in terms of $g(K)$ for any non-trivial companion knot $K$ by using equation (\ref{schubertgenus}). This gives the following corollary. Note that the case $K=U$ follows since $g(U)=0$ and $P_{p,1}(U)\simeq T_{2,3}$ has genus $1$.

\begin{corollary}\label{satellitegenus}
For any knot $K$ and for any $p>1$, $g(P_{p,1}(K))=(p+1)g(K)+1$.
\end{corollary}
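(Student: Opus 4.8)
The plan is to read this corollary off from Lemma~\ref{patterngenus} together with Schubert's genus formula~(\ref{schubertgenus}); the only ingredient not already in hand is the winding number of the pattern. So the first step is to record that $w(P_{p,1}) = p+1$. This is a direct computation from the explicit description of $P_{p,1}$ in Section~\ref{trefoilpatterns}: one counts the signed intersection of $P_{p,1}$ with a meridional disk $\{0\}\times D^2$. As a consistency check, $p+1$ is exactly the leading coefficient appearing in Theorem~\ref{tauoftrefoil}.

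With this in place, for a \emph{non-trivial} companion knot $K$ I would simply substitute into~(\ref{schubertgenus}):
\[
g(P_{p,1}(K)) = |w(P_{p,1})|\, g(K) + g(P_{p,1}) = (p+1)\, g(K) + 1,
\]
using $g(P_{p,1}) = 1$ from Lemma~\ref{patterngenus}. Since Schubert's theorem requires $K$ to be non-trivial, the remaining case is $K = U$, and there the formula holds because $g(U) = 0$ while $P_{p,1}(U) \simeq T_{2,3}$ has genus $1 = (p+1)\cdot 0 + 1$.

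I do not anticipate a real obstacle here: all of the Heegaard Floer input has already been expended in proving Lemma~\ref{patterngenus} (genus detection of $\HFKhat$ applied to $P_{p,1}(T_{2,3})$, computed via the immersed-curve pairing theorem), and what remains is bookkeeping. The only points that need a moment's care are getting the winding number right and remembering that Schubert's formula excludes the unknotted companion, so that case must be checked separately by hand.
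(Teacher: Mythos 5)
Your argument is correct and matches the paper's own reasoning: apply Schubert's formula (\ref{schubertgenus}) with $g(P_{p,1})=1$ from Lemma~\ref{patterngenus} and $w(P_{p,1})=p+1$ for non-trivial companions, then handle $K=U$ separately via $P_{p,1}(U)\simeq T_{2,3}$. The paper cites \cite[Lemma 6.3]{Chen} for the winding number rather than recomputing it directly, but that is an immaterial difference.
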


\begin{remark}\label{remark1}
There are trefoil patterns $P$ with $g(P)>1$ and there are unknot patterns $P$ with $g(P)>0$. The author does not know any upper bound on the genus of a pattern in the solid torus with a fixed knot type in $S^3$, but also does not know examples of patterns of a fixed knot type where the genus gets arbitrarily large. 
\end{remark}

In a similar vein, recall that Hirasawa, Murasugi, and Silver proved in \cite{fiberedsatellite} that a satellite knot with non-trivial companion is fibered if and only if both the pattern and the companion knot are fibered. This has the consequence that to determine if a pattern $P$ is fibered in the solid torus, it is enough to determine if the knot $P(T_{2,3})$ is fibered. Since knot Floer homology detects when a knot is fibered, to show that the pattern $P$ is fibered, it is then enough to compute $\HFKhat(S^3,P(T_{2,3}),g(P(T_{2,3})))$ and show that it has rank $1$.

\begin{theorem}\label{fiberedpattern}
For $p>1$ the pattern knot $P_{p,1}$ is fibered in the solid torus.
\end{theorem}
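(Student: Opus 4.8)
The plan is to reduce, via the known satellite fiberedness criterion, to a knot Floer computation essentially contained in the proof of Lemma~\ref{patterngenus}. Take $T_{2,3}$ as the companion; it is fibered and non-trivial. Since $P_{p,1}(T_{2,3})$ is a satellite with non-trivial companion, Hirasawa--Murasugi--Silver \cite{fiberedsatellite} shows it is fibered in $S^3$ if and only if both $T_{2,3}$ and the pattern $P_{p,1}$ are fibered; as $T_{2,3}$ is already fibered, it is equivalent to prove that $P_{p,1}(T_{2,3})$ is a fibered knot. By Corollary~\ref{satellitegenus}, $g(P_{p,1}(T_{2,3}))=(p+1)\cdot 1+1=p+2$, and since knot Floer homology detects fiberedness \cite{surfacedecomp, Nifibered}, it suffices to show that $\HFKhat(S^3,P_{p,1}(T_{2,3}),p+2)\cong\F$, i.e.\ that the top nonvanishing Alexander grading of $\HFKhat(S^3,P_{p,1}(T_{2,3}))$ has rank $1$.

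For the rank computation I would use the immersed-curve pairing theorem of Chen \cite{Chen}, exactly as in the computations behind Theorem~\ref{tauoftrefoil} and Lemma~\ref{patterngenus}. Recall that $\HFKhat(S^3,P_{p,1}(T_{2,3}))$ is the Lagrangian intersection Floer homology of the immersed multicurve associated to $\CFAhat(S^1\times D^2,P_{p,1})$ and the immersed curve associated to $\CFDhat(S^3\setminus\nu(T_{2,3}))$, both drawn in the punctured torus, with the Alexander grading recorded by the height of an intersection point in the appropriate cyclic cover. For $T_{2,3}$ this is the standard immersed curve of the right-handed trefoil, and the curve of $P_{p,1}$ is the one identified in Section~\ref{trefoilpatterns} (closely modeled on the $(p,1)$ unknot cabling curve, perturbed to a trefoil-type curve), which has winding number $p+1$, consistently with Schubert's formula \eqref{schubertgenus} and Corollary~\ref{satellitegenus}. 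After isotoping the two curves into minimal position on the surface, the pairing complex carries no differential (there are no bigons between the curves), so $\rk\,\HFKhat(S^3,P_{p,1}(T_{2,3}),i)$ is simply the number of intersection points lying at height $i$.

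It then remains to check that exactly one intersection point occurs at the top height $i=p+2$. This is precisely the information already needed to pin down the genus in Lemma~\ref{patterngenus}: after pulling both curves tight and examining the top of the lifted picture, the only intersection at height $p+2$ is that of the top strand of the trefoil's staircase curve with the corresponding top arc of the pattern curve, and the local geometry forces it to be unique. Hence $\HFKhat(S^3,P_{p,1}(T_{2,3}),p+2)\cong\F$ has rank $1$, so $P_{p,1}(T_{2,3})$ is fibered, and by the reduction above $P_{p,1}$ is fibered in the solid torus.

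The main obstacle is the diagrammatic bookkeeping in the pairing: one must write $\gamma(P_{p,1})$ down explicitly, verify the Alexander-grading normalization so that the top height really is $p+2$ (not merely that the genus equals $p+2$ by some indirect count), and confirm that no additional intersection point can be pushed into the top grading band -- equivalently, that minimal position on the surface and the correct grading in the cover are realized simultaneously. Since this analysis overlaps heavily with the proof of Lemma~\ref{patterngenus}, the genuinely new input for Theorem~\ref{fiberedpattern} is only the refinement from ``the top grading is nonzero'' to ``the top grading has rank exactly $1$''; a secondary nuisance is keeping the chirality and framing conventions consistent so that the winding number $p+1$ and the grading shifts come out as stated.
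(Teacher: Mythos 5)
Your proposal follows the same route as the paper: reduce via Hirasawa--Murasugi--Silver to the fiberedness of $P_{p,1}(T_{2,3})$, invoke the Ni--Ghiggini detection theorem to reduce to computing the rank in the top Alexander grading, and then read that rank off the Chen-style immersed curve pairing diagram by observing that the top grading $p+2$ is achieved by exactly one intersection point (the one labelled $a$, with the adjacent $b$ one grading lower and all other intersections no higher than $b$). This matches the paper's proof in substance and in structure.
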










\begin{remark}
Similar to remark \ref{remark1}, note that there are non-fibered trefoil patterns, and non-fibered unknot patterns. For example, any winding number zero pattern with any knot type in $S^3$ cannot be fibered \cite{fiberedsatellite}.
\end{remark}

One motivation to understand fibered patterns is the result of Ni \cite{Nisutured}*{Theorem 1.2} that the knot Floer homology of satellites with fibered patterns in the top Alexander grading has the same dimension as the knot Floer homology of the companion in the top Alexander grading. That is 
\begin{equation}\label{nifiberedpattern}
\rk\HFKhat(S^3,K,g(K))=\rk\HFKhat(S^3,P(K),g(P(K))).
\end{equation}

This theorem, when combined with the work of Juhasz in \cite{surfacedecomp,Seifertsurface} which relates the knot Floer homology in the top Alexander grading to the sutured Floer homology of the complement of a Seifert surface for the knot $K$ has the following consequences. 

\begin{proposition}\label{Seifertsurface}
If $K$ is a knot with $\rk(\HFKhat(S^3,K,g(K)))<4$ and $P$ is a fibered pattern, then for all $i\geq1$ the knots $K$ and $P^i(K)$ have unique minimal genus Seifert surfaces. 
\end{proposition}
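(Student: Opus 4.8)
The plan is to deduce the proposition from two results quoted in this introduction: the theorem of Juhász (\cite{Seifertsurface}, resting on the identification $\SFH(S^3\setminus\nu(R))\cong\HFKhat(S^3,K,g(K))$ of \cite{surfacedecomp}) that a knot whose knot Floer homology in the top Alexander grading has rank less than $4$ has a unique minimal genus Seifert surface, together with Ni's rank equality (\ref{nifiberedpattern}) for fibered patterns. For the companion $K$ itself there is nothing to prove: the hypothesis $\rk\HFKhat(S^3,K,g(K))<4$ is exactly the hypothesis of Juhász's theorem, so $K$ has a unique minimal genus Seifert surface.

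To handle the iterated satellites I would first promote the rank bound to every $P^i(K)$ by induction on $i$. Writing $P^i(K)=P\bigl(P^{i-1}(K)\bigr)$ and regarding $P^{i-1}(K)$ as the companion of the fibered pattern $P$, equation (\ref{nifiberedpattern}) gives
\[
\rk\HFKhat\bigl(S^3,P^i(K),g(P^i(K))\bigr)=\rk\HFKhat\bigl(S^3,P^{i-1}(K),g(P^{i-1}(K))\bigr),
\]
so with base case $P^0(K)=K$ and the hypothesis $\rk\HFKhat(S^3,K,g(K))<4$ we obtain $\rk\HFKhat(S^3,P^i(K),g(P^i(K)))<4$ for every $i\geq1$. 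Applying Juhász's theorem a second time, now to the knot $P^i(K)$ in place of $K$, shows that $P^i(K)$ has a unique minimal genus Seifert surface, which completes the proof.

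The proposition is thus a formal consequence of the two cited theorems, and the genuine depth lies entirely in Juhász's theorem rather than in anything new here; the only points that deserve a sentence of care are the following. First, the inductive step needs (\ref{nifiberedpattern}) to be valid with companion $P^{i-1}(K)$, which is fine since Ni's theorem holds for arbitrary companions (and if one prefers to stay with non-trivial companions, note that $P^{j}(K)$ is non-trivial for all $j$ whenever $K$ is, while for $K$ the unknot the conclusion for $P^i(K)$ still follows from the same induction). Second, one should recall why the numerical threshold in Juhász's theorem is exactly $4$: if $K$ had two non-isotopic minimal genus Seifert surfaces then, by connectivity of the Kakimizu complex, it would have two disjoint non-parallel ones $R_1,R_2$; decomposing $S^3\setminus\nu(R_1)$ along $R_2$ via the surface decomposition theorem of \cite{surfacedecomp} exhibits a direct summand of $\SFH(S^3\setminus\nu(R_1))\cong\HFKhat(S^3,K,g(K))$ which, by non-parallelness, is the $\SFH$ of a non-product sutured manifold and so has rank at least $2$, while the conjugation symmetry of $\SFH$ pairs it with a distinct ``opposite'' summand of the same rank (distinct because $R_2$ is homologically essential once $g(K)\geq1$; the case $g(K)=0$ is the unknot, which has a unique Seifert surface trivially), forcing $\rk\HFKhat(S^3,K,g(K))\geq4$.
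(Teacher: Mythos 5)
Your proposal takes essentially the same route as the paper: apply Juh\'asz's theorem (\cite{Seifertsurface}*{Theorem 2.3}) to $K$ directly, invoke Ni's rank equality (\ref{nifiberedpattern}) for the fibered pattern $P$ to propagate the bound $\rk\HFKhat(S^3,P^{i}(K),g(P^{i}(K)))<4$ inductively, and then apply Juh\'asz's theorem again at each stage. The paper's proof is exactly this, stated slightly more tersely (``repeating the above argument''). The extra paragraph you include sketching why the threshold in Juh\'asz's theorem is $4$ is material the paper simply cites rather than reproves, and your aside about the unknot companion is a reasonable sanity check but not required by the paper's phrasing; neither changes the substance of the argument.
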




\begin{proposition}\label{taut}
If $K$ is a knot with $\rk(\HFKhat(S^3,K,g(K)))=3$ and $P$ is a fibered pattern, then $K$ and $P^i(K)$ admit depth $\leq 1$ taut foliations transverse to the boundary.
\end{proposition}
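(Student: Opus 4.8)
The plan is to push the rank hypothesis on $K$ out to each satellite $P^i(K)$ using Ni's theorem, recast it as a statement about the sutured Floer homology of a Seifert surface complement, and then apply the theorem — due to Gabai and Juhász — that controls the depth of a taut foliation in terms of $\rk\SFH$.

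The first step is to iterate equation (\ref{nifiberedpattern}). Applied with the companion knot taken to be $P^{i-1}(K)$ (which is just some knot in $S^3$) and the fibered pattern $P$, it gives $\rk\HFKhat(S^3,P^{i-1}(K),g(P^{i-1}(K)))=\rk\HFKhat(S^3,P^i(K),g(P^i(K)))$, so induction on $i$ yields $\rk\HFKhat(S^3,P^i(K),g(P^i(K)))=\rk\HFKhat(S^3,K,g(K))=3$ for every $i\geq 0$, with $i=0$ recovering $K$. In particular every $P^i(K)$ is non-trivial, since $\rk\HFKhat(S^3,U,0)=1$.

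Next I would pass to the sutured setting. For a knot $J\subset S^3$ with a minimal genus Seifert surface $R$, let $S^3(R)$ denote the balanced sutured manifold obtained by decomposing the exterior $S^3\setminus\nu(J)$ along $R$; since $R$ is Thurston-norm minimizing, $S^3(R)$ is taut, and a Mayer--Vietoris argument across $R$ (together with $H_2(R)=H_2(S^3\setminus\nu(J))=0$) shows $H_2(S^3(R))=0$. By Juhász's surface decomposition theorem \cite{surfacedecomp}, the $\Spinc$ summand of $\SFH$ cut out by this decomposition is precisely the top Alexander grading, so $\SFH(S^3(R))\cong\HFKhat(S^3,J,g(J))$. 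Combining this with the previous step, for each $i\geq 0$ and any minimal genus Seifert surface $R_i$ of $P^i(K)$ (in fact unique up to isotopy by Proposition \ref{Seifertsurface}, though uniqueness is not needed) we obtain a taut balanced sutured manifold $S^3(R_i)$ with $H_2=0$ and $\rk\SFH(S^3(R_i))=3$.

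The last step is to invoke the result that a taut balanced sutured manifold $(M,\gamma)$ with $H_2(M)=0$ and $\rk\SFH(M,\gamma)<2^{n+1}$ admits a taut foliation of depth at most $n$ tangent to $R(\gamma)$; this comes from Gabai's construction of taut finite-depth foliations out of sutured manifold hierarchies, combined with Juhász's bound \cite{Seifertsurface} on the length of such a hierarchy in terms of $\rk\SFH$. Since $3<4=2^{1+1}$, each $S^3(R_i)$ carries such a foliation with $n=1$, and regluing the two copies of $R_i$ while extending across the suture annulus in the standard fashion turns it into a taut foliation of $S^3\setminus\nu(P^i(K))$ that is transverse to the boundary torus and still has depth at most $1$; taking $i=0$ gives the statement for $K$. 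I expect the main obstacle to be locating and applying the rank-to-depth estimate in exactly the needed form — in particular that the threshold is $2^{n+1}$, so that rank $3$ forces depth $\leq 1$ rather than merely finite depth, and that the foliation can be taken tangent to $R(\gamma)$ so that it descends to a genuinely boundary-transverse foliation of the knot exterior.
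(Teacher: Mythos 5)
Your overall strategy — iterate Ni's equation (\ref{nifiberedpattern}) to transport the rank-$3$ hypothesis to every $P^i(K)$, and then invoke a rank bound that forces a depth $\leq 1$ taut foliation — is the same as the paper's, but your route through $\SFH$ is a detour that introduces an imprecision at exactly the point where precision is needed. The paper does not pass to the sutured Floer homology of a Seifert surface complement; it cites \cite{surfacedecomp}*{Theorem 1.8} directly, which is stated at the level of $\HFKhat$ and already concludes that a knot $J$ with $\rk\HFKhat(S^3,J,g(J))<4$ and non-vanishing top Alexander coefficient $a_{g(J)}$ has a depth $\leq 1$ taut foliation of its exterior transverse to the boundary. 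Two things are worth flagging in your version. First, the general estimate you invoke — $\rk\SFH(M,\gamma)<2^{n+1}$ implying depth $\leq n$ — is not quite what \cite{Seifertsurface} proves; the depth bound coming from Juhász's hierarchy-length estimates there is weaker (of the form depth $\leq 2k$ when $\rk\SFH<2^{k+1}$), so at rank $3$ it would only yield depth $\leq 2$. The sharper depth-$\leq 1$ conclusion for rank $<4$ is precisely the content of Theorem 1.8 of \cite{surfacedecomp} (and its sutured analogue in the same paper), which you should cite instead. Second, the paper explicitly records that rank $3$ forces $a_{g}=\chi(\HFKhat(S^3,\cdot,g))$ to be odd and hence nonzero — this is the hypothesis of Theorem 1.8, and it is why the proposition is stated for rank $=3$ rather than rank $<4$ (the rank-$2$ case could have $a_g=0$). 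Your write-up never checks this Euler-characteristic condition, so as written there is a gap: you need the observation that odd rank forces the leading Alexander coefficient of each $P^i(K)$ to be nonzero before the depth theorem applies. Once you cite the correct theorem and add that one-line check, the argument closes.
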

Recall that fibered knots have unique minimal genus Seifert surfaces. These propositions can be viewed as generalizations of this fact. In particular, by Theorem \ref{fiberedpattern}, these propositions apply to the patterns $P_{p,1}$.

Finally we study the next to top Alexander graded piece of the knot Floer homology of these satellite knots. In the case that $K$ is a fibered knot, $\HFKhat(S^3,K,g(K)-1)$ contains information about the monodromy of the fibration, in the following sense. 

\begin{theorem}[\cite{Niveering}]\label{veering} If $K$ is a fibered knot and $\rk(\HFKhat(S^3,K,g(K)-1))=1$, then the monodromy of $K$ is either left or right veering. 
\end{theorem}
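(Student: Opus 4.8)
The plan is to reprove Ni's theorem by contraposition. Assume $K$ is fibered with fiber surface $F$ of genus $g=g(K)$ and monodromy $\phi\colon F\to F$, and assume $\phi$ is \emph{neither} right-veering \emph{nor} left-veering; I would show $\rk\HFKhat(S^3,K,g-1)\geq 2$. Since knot Floer homology detects fiberedness, $\HFKhat(S^3,K,g)\cong\Z$, and this top group is generated by a distinguished ``contact generator'' $x_0$ in a doubly-pointed Heegaard diagram $\mathcal D$ adapted to the open book $(F,\phi)$, in the sense of Ozsv\'{a}th--Szab\'{o} and Honda--Kazez--Mati\'{c}. The heart of the approach is to translate the veering behaviour of $\phi$ into sutured Floer homology: following Juh\'{a}sz's surface-decomposition machinery together with Ni's analysis of the fiber complement, $\HFKhat(S^3,K,g-1)$ is computed by the sutured Floer homology of a sutured manifold $M$ obtained from the knot complement --- viewed as the mapping torus of $\phi$ --- by excising neighbourhoods of a complete system of spanning arcs $\{a_1,\dots,a_{2g}\}$ of $F$. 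The crucial feature is that the position of $\phi(a_i)$ relative to $a_i$ (to the right, to the left, or crossing it) is recorded by a corresponding class in $\SFH(M)$, essentially an $EH$-type contact class of a partial open book, carrying a Maslov grading shift whose sign reflects that position.

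Granting this dictionary, the argument proceeds as follows. First, fix $\mathcal D$, $x_0$, and the identification $\SFH(M)\cong\HFKhat(S^3,K,g-1)$. Second, since $\phi$ is not right-veering, choose a spanning arc $a$ along which $\phi(a)$ runs strictly to the left at some endpoint; the associated class $y^{-}\in\HFKhat(S^3,K,g-1)$ is nonzero and lies in a definite Maslov grading coming from a ``leftward'' suture reconfiguration. Third, since $\phi$ is not left-veering, choose a spanning arc $b$ along which $\phi(b)$ runs strictly to the right at some endpoint, producing a nonzero class $y^{+}$ in the ``rightward'' Maslov grading; here $a$ and $b$ may coincide, in which case one uses the two endpoints (or a crossing point) of the single arc. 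Fourth, check that $y^{+}$ and $y^{-}$ are linearly independent --- they sit in distinct Maslov gradings, or, equivalently, they are the contact classes of the two contact structures supported by $(F,\phi)$ on $S^3$ and on $-S^3$, which occupy complementary slots of the next-to-top group --- so that $\rk\HFKhat(S^3,K,g-1)\geq 2$, contradicting the hypothesis $\rk\HFKhat(S^3,K,g-1)=1$. The genus-one case ($g-1=0$) is treated separately via the classification of genus-one fibered knots: the two trefoils have (left- or right-) veering monodromy, while the figure-eight knot has $\rk\HFKhat(S^3,4_1,0)=3\neq 1$.

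I expect the main obstacle to be making the ``dictionary'' of the first paragraph precise: identifying exactly which sutured manifold computes the next-to-top group, and showing that its sutured Floer homology splits over the spanning arcs in a way that faithfully records the veering of $\phi$ --- in particular that a left-moving arc and a right-moving arc contribute \emph{nonzero} and linearly \emph{independent} classes, rather than classes that cancel in homology. This is where one needs the full strength of Honda--Kazez--Mati\'{c}'s theory of partial open books and the sutured contact invariant, together with its compatibility with Juh\'{a}sz's surface-decomposition formulas --- or, alternatively, a sufficiently explicit holomorphic-disk count in the open-book-adapted diagram $\mathcal D$ controlling the differential from Alexander grading $g$ to $g-1$ well enough to see that right-veering monodromy collapses the next-to-top homology to rank one while ``mixed'' veering does not. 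A secondary subtlety is orientation bookkeeping: $\phi$ failing to be left-veering is the same as $\phi^{-1}$ failing to be right-veering, i.e.\ a right-veering-type failure for $K$ with reversed orientation, and the two relevant contact classes naturally live in Heegaard Floer groups of $S^3$ taken with opposite orientations; it is precisely the interaction of these two failure modes inside the single group $\HFKhat(S^3,K,g-1)$ that allows the one numerical hypothesis $\rk\HFKhat(S^3,K,g-1)=1$ to rule out both of them.
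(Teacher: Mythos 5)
The paper does not prove this theorem at all: it is stated as a citation to Ni's work, \cite{Niveering}, and no argument for it appears in the text. There is therefore no ``paper's own proof'' against which to measure your sketch, and the relevant question is only whether your outline would constitute a self-contained proof. It would not, and you have in fact identified the gap yourself.

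Your plan is to run a contrapositive through the ``sutured Floer homology detects veering'' philosophy: decompose the fiber complement along spanning arcs, identify $\HFKhat(S^3,K,g-1)$ with the sutured Floer homology of the resulting manifold, and argue that a leftward failure and a rightward failure of veering each produce a nonzero class, linearly independent of each other, forcing rank $\geq 2$. This is broadly the right circle of ideas --- Honda--Kazez--Mati\'{c}'s right-veering criterion, Juh\'{a}sz's surface decomposition theorem, and the identification of next-to-top knot Floer homology with $\SFH$ of the fiber complement are precisely the tools that appear in the literature around this result --- but the sketch leaves the entire load-bearing step as a black box. Two specific issues. First, the assertion that a left-moving spanning arc $a$ and a right-moving spanning arc $b$ produce classes $y^{-}$ and $y^{+}$ that are \emph{both nonzero and linearly independent} is not established; the Honda--Kazez--Mati\'{c} contact class can vanish even when the monodromy is not right-veering, so ``not right-veering'' does not by itself hand you a nonzero class in the next-to-top grading, and you need an argument adapted to the sutured manifold at hand, not just the contact invariant of $(F,\phi)$. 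Second, the claim that $y^{+}$ and $y^{-}$ live in ``distinct Maslov gradings'' or ``complementary slots'' is exactly the kind of statement that requires a computation in the open-book-adapted Heegaard diagram; without it the two classes could a priori coincide or cancel, and the rank bound would evaporate. Your genus-one base case via the classification of genus-one fibered knots is fine, but the general case as written is a research outline rather than a proof.
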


\begin{remark} 
There is no analogue of equation (\ref{nifiberedpattern}) for the next to top Alexander graded piece of knot Floer homology of a satellite and its companion. In general, there is not even an inequality relating them, even for fibered patterns. For example  $\rk(\HFKhat(S^3,T_{2,3},0)=1$ and $\rk\HFKhat(S^3,(T_{2,3})_{2,1},1)=2$ and as Theorem \ref{monodromy} shows, constructing satellites with certain patterns can decrease the rank in the next to top Alexander graded piece by an arbitrary amount. Note, certain families of patterns do preserve the property of having one dimensional Floer homology in the next to top Alexander grading, for example if $K$ is an L-space knot and $P$ is a pattern so that $P(K)$ is also an L-space knot (for example the $(p,q)$ cable pattern with $\frac{q}{p}\geq 2g(K)-1$) then by \cite{geography} both $K$ and $P(K)$ have one dimensional Floer homology in the next to top Alexander grading. 
\end{remark}

Recall that the $\delta$-grading on knot Floer homology is define by $\delta=m-A$. We call a knot $K$ \emph{Floer thin} (or \emph{thin}) if the $\delta$-grading is constant for all generators of $\HFKhat(S^3,K)$.

\begin{theorem}\label{monodromy}

For each $p>1$, and for any fibered knot $K$ with $\tau(K)=\pm g(K)$, or for any fibered thin knot $K$, we have

$$\rk(\HFKhat(S^3,P_{p,1}(K),g(P_{p,1}(K))-1))=1.$$

\end{theorem}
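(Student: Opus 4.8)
The plan is to compute $\HFKhat(S^3,P_{p,1}(K),g(P_{p,1}(K))-1)$ from the immersed-curve pairing theorem of \cite{Chen}, using the explicit immersed multicurve $\gamma(P_{p,1})$ produced in Section~\ref{trefoilpatterns} together with a local model for the immersed multicurve $\gamma(K)$ of the companion exterior near the top of its winding region. Throughout I would work with taut (minimal position) representatives of the two curves, so that the Lagrangian intersection Floer homology computing $\HFKhat(S^3,P_{p,1}(K))$ has vanishing differential and its rank in each Alexander grading equals the corresponding geometric intersection number.

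The first step is a grading estimate. By Corollary~\ref{satellitegenus} the top Alexander grading of the satellite is $g(P_{p,1}(K))=(p+1)g(K)+1$, and since $K$ is fibered so is $P_{p,1}(K)$ (Theorem~\ref{fiberedpattern} together with the theorem of Hirasawa--Murasugi--Silver from \cite{fiberedsatellite}), so this grading has rank $1$; this also follows from Ni's theorem, equation~\eqref{nifiberedpattern}. The winding number satisfies $|w(P_{p,1})|=p+1$ by Schubert's formula~\eqref{schubertgenus} and Lemma~\ref{patterngenus}, and an intersection point of $\gamma(P_{p,1})$ and $\gamma(K)$ at which the companion strand sits at winding height $h$ contributes only to Alexander gradings of the satellite in the window $[(p+1)h-g(P_{p,1}),\,(p+1)h+g(P_{p,1})]=[(p+1)h-1,\,(p+1)h+1]$, the correction being the pattern height of the other strand. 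Since there are no companion strands above height $g(K)$ and $p>1$, one checks that satellite grading $g(P_{p,1}(K))-1$ lies in such a window only for $h=g(K)$, and there only when the pattern height is $0$; meanwhile every strand of $\gamma(K)$ at height $\le g(K)-1$ feeds only into satellite gradings $\le (p+1)(g(K)-1)+1<g(P_{p,1}(K))-1$, so the (possibly large) rank of $\HFKhat(S^3,K,g(K)-1)$ is irrelevant here. Thus $\HFKhat(S^3,P_{p,1}(K),g(P_{p,1}(K))-1)$ is computed by a purely local count: the geometric intersection number, in minimal position, of the height-$g(K)$ part of $\gamma(K)$ with the pattern-height-$0$ arcs of $\gamma(P_{p,1})$.

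The second step is to pin down that local count. Because $\rk\HFKhat(S^3,K,g(K))=1$, the height-$g(K)$ part of $\gamma(K)$ is a single cap crossing the meridian once; what the two hypotheses provide is enough control on the shape of this cap, and on how it attaches to the lower part of the curve, to isotope it to a single horizontal strand at height $g(K)$ over the support of the pattern-height-$0$ arcs. If $\tau(K)=\pm g(K)$ (so $\epsilon(K)=\pm1$, the case $K=U$ being the base case $P_{p,1}(U)\simeq T_{2,3}$), the cap is the monotone one and this is immediate. If $K$ is thin and fibered, Petkova's classification of thin knot Floer complexes exhibits $\gamma(K)$ as a single distinguished component together with figure-eight-type components at height $\tau(K)$, and the fibered hypothesis forces the leading Alexander coefficient to be $\pm1$, which places all figure-eight components and all other nontrivial features strictly below height $g(K)$; again the cap is a horizontal strand near the relevant region. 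In either case the local model agrees with the one for the unknot, whose curve $\gamma(U)$ is a single horizontal strand, so the count is the same count that realizes $\rk\HFKhat(S^3,T_{2,3},0)=1$, and this value is already available from the diagram for $P_{p,1}$ in Section~\ref{trefoilpatterns}. This gives $\rk\HFKhat(S^3,P_{p,1}(K),g(P_{p,1}(K))-1)=1$.

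The main obstacle is the second step: verifying that under each hypothesis the top cap of $\gamma(K)$ really does meet the pattern-height-$0$ arcs of $\gamma(P_{p,1})$ in the same pattern as the unknot's curve does -- equivalently, controlling the width and position of the cap, and the location of the nearest lower features of $\gamma(K)$, sharply enough to rule out any extra intersection points surviving in minimal position. This is exactly where one cannot allow a fully general fibered companion, and where the case $\tau(K)=\pm g(K)$ and the thin fibered case are handled by somewhat different arguments; once the local model is established, the remaining work is diagram chasing with the explicit curve $\gamma(P_{p,1})$.
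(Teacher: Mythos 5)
Your proposal follows essentially the same strategy the paper takes in Lemmas~\ref{fiberedtau=g} and~\ref{thinmonodromy}: compute $\HFKhat(S^3,P_{p,1}(K),g(P_{p,1}(K))-1)$ from an immersed-curve pairing diagram, observe that the answer is governed purely by how $\beta(P_{p,1})$ meets $\gamma(K)$ in the immediate vicinity of the topmost meridional crossing of $\gamma(K)$, and split into the two hypotheses to control the shape of that top cap via Lemma~\ref{essentialcomponent} (for $\tau(K)=\pm g(K)$) or via Petkova's structure theorem (for the thin case). The paper phrases the localization as ``Alexander grading weakly decreases along $\beta$ past $b$ before the next intersection'' whereas you phrase it as a grading-window estimate keyed to the winding number $p+1$ and the pattern genus $g(P_{p,1})=1$; these are two ways of expressing the same observation, and both leave the explicit top-cap intersection count as the remaining step, which the paper carries out by inspection of figures~\ref{taupos} and~\ref{Pppairingthin} and which you correctly identify as ``the main obstacle.''

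One point in your thin-case analysis is actually wrong as written and should be corrected. You assert that fiberedness forces ``all figure-eight components and all other nontrivial features strictly below height $g(K)$'' and that ``the cap is a horizontal strand.'' This is internally inconsistent: when $|\tau(K)|<g(K)$ the essential component spans only rows $[-|\tau(K)|,|\tau(K)|]$, so if every figure-eight also lay strictly below height $g(K)$, nothing would cross at height $\pm g(K)$, contradicting $\rk\HFKhat(S^3,K,g(K))=1$. The correct picture, and the one the paper uses, is that a single figure-eight component \emph{does} pass through height $g(K)$; what fiberedness buys you is that exactly one such component does so, and its top cap contributes the unique meridional crossing at that height. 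The ultimate intersection count still goes through because only the top cap of that figure-eight feeds into Alexander gradings $\geq g(P_{p,1}(K))-1$ in your window argument, but your intermediate justification needs to be replaced by this corrected picture. Also, Petkova's figure-eights are distributed across many heights, not concentrated at height $\tau(K)$ as you state.
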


\begin{corollary}
For any fibered knot $K$ with $\tau(K)=\pm g(K)$, or for any fibered thin knot $K$, the fibered knot $P_{p,1}(K)$ has left or right veering monodromy. 

\end{corollary}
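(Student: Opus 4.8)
The plan is to deduce this corollary from Theorem \ref{monodromy} together with Ni's veering-detection result, Theorem \ref{veering}, after first confirming that the satellites in question are fibered. First I would record fiberedness of $P_{p,1}(K)$. If $K$ is non-trivial this is immediate from Theorem \ref{fiberedpattern} and the theorem of Hirasawa--Murasugi--Silver \cite{fiberedsatellite}: a satellite with non-trivial companion is fibered exactly when both pattern and companion are fibered, and here $P_{p,1}$ is fibered by Theorem \ref{fiberedpattern} while $K$ is fibered by hypothesis. If $K=U$, then $P_{p,1}(U)\simeq T_{2,3}$ is fibered; note this degenerate case is still covered by the hypothesis list, since $\tau(U)=0=g(U)$ (and $U$ is also thin). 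I would also note, via Corollary \ref{satellitegenus}, that $g(P_{p,1}(K))=(p+1)g(K)+1\geq 1$, so that $g(P_{p,1}(K))-1\geq 0$ is a genuine Alexander grading (the next-to-top one), which is what makes the hypothesis of Theorem \ref{veering} meaningful for the satellite.

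The main step is then purely a matter of quoting: under either hypothesis on $K$ ($K$ fibered with $\tau(K)=\pm g(K)$, or $K$ fibered and thin), Theorem \ref{monodromy} gives
$$\rk\big(\HFKhat(S^3,P_{p,1}(K),g(P_{p,1}(K))-1)\big)=1.$$
Since $P_{p,1}(K)$ is fibered, Theorem \ref{veering} applies with $K$ replaced by $P_{p,1}(K)$, and we conclude that the monodromy of the fibration of $S^3\setminus\nu(P_{p,1}(K))$ is either left-veering or right-veering. That is the entire content of the corollary.

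Because we are allowed to assume everything stated earlier in the excerpt, there is no substantial obstacle here: the genuine work lives in Theorem \ref{monodromy}, and granting it the corollary is a one-line application of Ni's theorem. The only places where a little care is warranted are (i) the $K=U$ case, which lies outside the Hirasawa--Murasugi--Silver criterion and so must be handled separately using $P_{p,1}(U)\simeq T_{2,3}$, and (ii) checking that the grading $g(P_{p,1}(K))-1$ named in Theorem \ref{monodromy} is nonnegative so that Theorem \ref{veering} is being invoked on the actual next-to-top summand; both are dispatched by Corollary \ref{satellitegenus} and the fiberedness of $T_{2,3}$.
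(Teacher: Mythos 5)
Your proof is correct and follows the same (and only natural) route the paper intends: Theorem \ref{monodromy} supplies the hypothesis of Ni's Theorem \ref{veering} applied to the fibered knot $P_{p,1}(K)$, and the corollary follows immediately. Your extra care about fiberedness (via Theorem \ref{fiberedpattern} and \cite{fiberedsatellite}, and the degenerate case $K=U$) and about $g(P_{p,1}(K))-1\geq 0$ is harmless bookkeeping that the paper leaves implicit.
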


Lastly, we use Theorem \ref{monodromy} to show that for some fibered companion knots $K$, the satellite knots $P_{p,1}(K)$ are not Floer thin. The main result we use is \cite{baldwinveering}*{Corollary 1.7} which says that a fibered thin knot with $|\tau(K)|<g(K)$ cannot have left or right veering monodromy.

\begin{proposition}\label{thinsatellite}
If $K$ is a non-trivial fibered knot with thin knot Floer homology such that $|\tau(K)|<g(K)$, then the knot Floer homology of $P_{p,1}(K)$ is not thin. 
\end{proposition}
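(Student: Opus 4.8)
\emph{Proof proposal.} The plan is to argue by contradiction: assume $\HFKhat(S^3,P_{p,1}(K))$ is thin, and then play Theorem \ref{monodromy} off against \cite{baldwinveering}*{Corollary 1.7}. Concretely, I need two facts about $P_{p,1}(K)$: that it is a fibered knot whose monodromy is left or right veering, and that $|\tau(P_{p,1}(K))|<g(P_{p,1}(K))$.

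For the first fact, note that $K$ is a non-trivial fibered knot and that $P_{p,1}$ is a fibered pattern by Theorem \ref{fiberedpattern}, so $P_{p,1}(K)$ is fibered by the theorem of Hirasawa--Murasugi--Silver \cite{fiberedsatellite}. Since $K$ is moreover thin, Theorem \ref{monodromy} yields $\rk\HFKhat(S^3,P_{p,1}(K),g(P_{p,1}(K))-1)=1$, whence Ni's Theorem \ref{veering} shows that the monodromy of $P_{p,1}(K)$ is left or right veering. (This step is exactly the Corollary stated just after Theorem \ref{monodromy}; the hypothesis $|\tau(K)|<g(K)$ plays no role here.)

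For the second fact, I would compute directly. Corollary \ref{satellitegenus} gives $g(P_{p,1}(K))=(p+1)g(K)+1$. Since $K$ is thin we may invoke the standard identity $\epsilon(K)=\operatorname{sgn}(\tau(K))$ and run through the three cases of Theorem \ref{tauoftrefoil}. If $\epsilon(K)=1$, then $1\leq\tau(K)\leq g(K)-1$ (the upper bound from $|\tau(K)|<g(K)$), so $0<\tau(P_{p,1}(K))=(p+1)\tau(K)+1\leq(p+1)g(K)-p<g(P_{p,1}(K))$. If $\epsilon(K)=-1$, then $-g(K)+1\leq\tau(K)\leq-1$, so $\tau(P_{p,1}(K))=(p+1)(\tau(K)+1)$ satisfies $-g(P_{p,1}(K))<(p+1)(-g(K)+2)\leq\tau(P_{p,1}(K))\leq0$. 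If $\epsilon(K)=0$, then $\tau(K)=0$, so $\tau(P_{p,1}(K))=\tau(T_{2,3})=1$, while $g(P_{p,1}(K))=(p+1)g(K)+1\geq p+2\geq4$ because $K$ is non-trivial. In all three cases $|\tau(P_{p,1}(K))|<g(P_{p,1}(K))$.

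Combining the two facts: if $P_{p,1}(K)$ were thin, then it would be a fibered thin knot with $|\tau(P_{p,1}(K))|<g(P_{p,1}(K))$, and \cite{baldwinveering}*{Corollary 1.7} would force its monodromy to be neither left nor right veering, contradicting the second paragraph. Hence $\HFKhat(S^3,P_{p,1}(K))$ is not thin. I do not expect a serious obstacle: apart from the identity $\epsilon(K)=\operatorname{sgn}(\tau(K))$ for thin knots, everything is bookkeeping with results already in hand, and the only place needing care is the case analysis — in particular, observing that $|\tau(K)|<g(K)$ together with $\tau(K)\neq0$ already forces $g(K)\geq2$, which is exactly what makes the $\epsilon(K)=\pm1$ estimates honest (the admissible intervals for $\tau(K)$ are non-empty).
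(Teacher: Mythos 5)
Your proposal is correct and follows essentially the same route as the paper: invoke Theorem \ref{monodromy} together with \cite{baldwinveering}*{Corollary 1.7}, and reduce to checking $|\tau(P_{p,1}(K))|<g(P_{p,1}(K))$ via Corollary \ref{satellitegenus} and a case analysis over $\epsilon(K)$ using Theorem \ref{tauoftrefoil}. The one place you are more explicit than the paper is in flagging the fact $\epsilon(K)=\operatorname{sgn}(\tau(K))$ for thin knots, which the paper's $\epsilon(K)=-1$ estimate $(p+1)|\tau(K)+1|<(p+1)g(K)$ implicitly relies on (it would fail if $\tau(K)$ could be close to $+g(K)$ with $\epsilon(K)=-1$); making this explicit is a small but genuine improvement in rigor.
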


Since quasialternating knots have thin knot Floer homology by \cite{quasialt}, we have the following consequence of Proposition \ref{thinsatellite}.
\begin{corollary}
For any $p>1$ and for any thin fibered knot $K$ with $|\tau(K)|<g(K)$, the knots $P_{p,1}(K)$ are not quasialternating. 

\end{corollary}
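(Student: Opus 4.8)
The plan is to obtain this as a contrapositive of Proposition \ref{thinsatellite}. By \cite{quasialt} every quasialternating knot has thin knot Floer homology, so a thin fibered knot $K$ with $|\tau(K)|<g(K)$ is in particular non-trivial (the inequality forces $g(K)\geq 1$) and satisfies the hypotheses of Proposition \ref{thinsatellite}; hence $P_{p,1}(K)$ is not thin, and therefore not quasialternating. So the work is all in Proposition \ref{thinsatellite}, whose proof I would organize as follows.

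Assume for contradiction that $\HFKhat(S^3,P_{p,1}(K))$ is thin. First I would establish that $P_{p,1}(K)$ is fibered: $K$ is fibered by hypothesis, $P_{p,1}$ is fibered in the solid torus by Theorem \ref{fiberedpattern}, and the theorem of Hirasawa--Murasugi--Silver \cite{fiberedsatellite} shows that the satellite of a non-trivial fibered knot along a fibered pattern is again fibered. Since $K$ is a fibered thin knot, Theorem \ref{monodromy} gives $\rk(\HFKhat(S^3,P_{p,1}(K),g(P_{p,1}(K))-1))=1$, and then Ni's Theorem \ref{veering} shows the monodromy of $P_{p,1}(K)$ is left or right veering.

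To reach a contradiction with \cite{baldwinveering}*{Corollary 1.7}, I must verify $|\tau(P_{p,1}(K))|<g(P_{p,1}(K))$. Since $K$ is thin, $\epsilon(K)$ equals the sign of $\tau(K)$, so one of the three cases of Theorem \ref{tauoftrefoil} applies; I combine it with $g(P_{p,1}(K))=(p+1)g(K)+1$ from Corollary \ref{satellitegenus}. If $\tau(K)=0$ then $|\tau(P_{p,1}(K))|=1$ while $g(P_{p,1}(K))=(p+1)g(K)+1\geq p+2>1$. If $\tau(K)>0$ then $1\leq\tau(K)\leq g(K)-1$ gives $0<\tau(P_{p,1}(K))=(p+1)\tau(K)+1\leq(p+1)g(K)-p<(p+1)g(K)+1$. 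If $\tau(K)<0$ then $1-g(K)\leq\tau(K)\leq -1$ gives $\tau(P_{p,1}(K))=(p+1)(\tau(K)+1)\leq 0$ with $|\tau(P_{p,1}(K))|=(p+1)(-\tau(K)-1)\leq(p+1)(g(K)-2)<(p+1)g(K)+1$. In every case $|\tau(P_{p,1}(K))|<g(P_{p,1}(K))$, so by \cite{baldwinveering}*{Corollary 1.7} the fibered thin knot $P_{p,1}(K)$ cannot have left or right veering monodromy, contradicting the previous paragraph. Hence $P_{p,1}(K)$ is not thin.

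The main obstacle here is bookkeeping rather than a new idea: one must handle the $\epsilon(K)=-1$ case of Theorem \ref{tauoftrefoil} carefully, since there $\tau(P_{p,1}(K))=(p+1)(\tau(K)+1)$ may be zero or negative, and one must keep every inequality strict so that the hypothesis $|\tau|<g$ of \cite{baldwinveering}*{Corollary 1.7} (as opposed to $|\tau|\leq g$) is genuinely satisfied. The only input I am quietly relying on beyond the results in this paper is the standard fact that thin knots satisfy $\epsilon(K)=\operatorname{sgn}(\tau(K))$; everything else is a direct assembly of Theorems \ref{fiberedpattern}, \ref{monodromy}, \ref{veering}, \ref{tauoftrefoil}, Corollary \ref{satellitegenus}, and \cite{baldwinveering}*{Corollary 1.7}.
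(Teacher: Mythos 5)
Your proof is correct and follows essentially the same route as the paper: the corollary is obtained from Proposition \ref{thinsatellite} via the fact \cite{quasialt} that quasialternating knots are Floer thin, and Proposition \ref{thinsatellite} itself is proved by combining Theorems \ref{fiberedpattern}, \ref{monodromy}, \ref{veering}, and \cite{baldwinveering}*{Corollary 1.7} with the numerical bounds from Theorem \ref{tauoftrefoil} and Corollary \ref{satellitegenus}. Your case analysis (organized by the sign of $\tau(K)$ via the standard fact that $\epsilon = \operatorname{sgn}(\tau)$ for thin knots) is a slightly more careful packaging of the paper's case analysis by $\epsilon(K)$, and the inequalities you derive are the same ones the paper uses.
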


\subsection*{Organization} 
In section \ref{bordered} we introduced the bordered pairing theorem from \cite{LOT} and recall the work of \cite{HRW} reinterpreting the bordered invariants in terms of immersed curves in the punctured torus. In section \ref{(1,1)pairingtheorem}, we recall Chen's immersed curve version of the pairing theorem from \cite{Chen}. In section \ref{trefoilpatterns} we prove Theorem \ref{tauoftrefoil}. In section \ref{threegenusandfibered}, we prove Theorem \ref{fiberedpattern}, as well as propositions \ref{Seifertsurface} and \ref{taut}. In section \ref{nexttotop}, we prove Theorem \ref{monodromy} and Proposition \ref{thinsatellite}.

\subsection*{Acknowledgements}
The author thanks Subhankar Dey for helpful discussions relating to satellites with $(1,1)$ patterns, and his advisor Robert Lipshitz for helpful conversations.

\section{Bordered Floer Homology}\label{bordered}

In this section we introduced the necessary notation to state and interpret the pairing theorem for bordered Floer homology of \cite{LOT}. Bordered Floer homology is an invariant that is used to study Heegaard Floer homology of three manifolds that have been decomposed along essential embedded surfaces. In our case, studying satellite operators, we are interested in decomposing the ambient three manifold, $S^3$ together with a knot $K$, along an essential torus. Then one can compute certain algebraic invariants of both sides and the Floer homology of the ambient three manifold (together with the knot filtration) can be computed by suitably combining these invariants.

In \cite{LOT}, Lipshitz, Oszv\'{a}th and Thurston associate, to a three manifold with parameterized torus boundary, a type A and D structure over the torus algebra $\mathcal{A}$. We now briefly describe these concepts. The torus algebra $\mathcal{A}$ is defined as follows. Over $\F$ it has a basis  consisting of two mutually orthogonal idempotents $\iota_0$ and $\iota_1$ and six other nontrivial elements $\rho_1,\rho_2,\rho_3,\rho_{12},\rho_{23},\rho_{123}$. The non-zero products in the algebra are given as follows:

$$\rho_1\rho_2=\rho_{12} \quad \rho_2\rho_3=\rho_{23} \quad \rho_1\rho_{23}=\rho_{12}\rho_3=\rho_{123}$$

\begin{align*}
\rho_1&=\iota_0\rho_1\iota_1 \qquad& \rho_2&=\iota_1\rho_2\iota_0 &\qquad \rho_3&=\iota_0\rho_3\iota_1\\
\rho_{12}&=\iota_0\rho_{12}\iota_0 \qquad &\rho_{23}&=\iota_1\rho_{23}\iota_1 &\qquad \rho_{123}&=\iota_0\rho_{123}\iota_1\\
\end{align*}
 If we let $\mathcal{I} \subset \mathcal{A}$ denote the subring of idempotents, then a type $D$ structure over $\mathcal{A}$ is a unital left $\mathcal{I}$ module $N$ together with an $\mathcal{I}$ linear map $\delta: N \to \mathcal{A} \otimes_{\mathcal{I}} N$ such that

$$(\mu \otimes \mathbb{I})\circ (\mathbb{I} \otimes \delta) \circ \delta =0$$

A type A structure is a right unital $\mathcal{I}$ module $M$ with a collection of maps $m_{i+1}:M \otimes \mathcal{A}^i \to M$, for $i \geq 0$ such that

\begin{equation}\label{inftyrelation}
0=\sum_{i=1}^n m_{n-i}(m_i(x\otimes a_1\otimes \cdots \otimes a_{i-1})\otimes\cdots \otimes a_{n-1})+\sum_{i=1}^{n-2}m_{n-1}(x\otimes \cdots\otimes a_ia_{i+1}\otimes \cdots \otimes a_n)\end{equation} and so that

$$m_2(x,1)=x$$
$$m_i(x,\cdots,1,\cdots)=0$$


Given a type A structure $M$ and a type D structure $N$, we can form a chain complex, called as the box tensor product and denoted $M \boxtimes N$. The underlying vector space is the tensor product $M \otimes_{\mathcal{I}} N$, and the differential is defined by

\begin{equation}
\partial^{\boxtimes}(x\otimes y)=\sum_{i=0}^{\infty} (m_{i+1}\otimes \mathbb{I})(x\otimes \delta_i(y))
\end{equation}
In the case that the type D structure is bounded, as defined in in \cite{LOT}*{section 2}, then the above sum is finite and the box tensor complex is well defined.

In what follows, we are interested in the following version of the bordered pairing theorem.

\begin{theorem}[\cite{LOT}*{Theorem 11.19}]\label{lotpairing}
Suppose $Y$ is a closed $3$-manifold decomposed as $Y=Y_1\cup Y_2$ with $\partial Y_1 \cong -\partial Y_2\cong  T^2$. Suppose further that $K \subset Y_1$ is a knot with is null homologous in $Y$. Then up to homotopy equivalence of chain complexes 

$$g\CFKhat(Y,K) \simeq \CFAhat(Y_1,K) \boxtimes \CFDhat(Y_2)$$
\end{theorem}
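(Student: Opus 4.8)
The plan is to prove Theorem~\ref{lotpairing} by the strategy of \cite{LOT}: build a Heegaard diagram for $(Y,K)$ by gluing bordered diagrams for $Y_1$ and $Y_2$ along their common torus boundary, and then understand how holomorphic curves in the glued diagram degenerate as the gluing region (the ``neck'') is stretched. Since the invariants $\CFAhat$ and $\CFDhat$ are themselves defined by counting holomorphic curves in bordered diagrams with asymptotics along Reeb chords, this is a matter of matching those asymptotics across the gluing circle.

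First I would fix a doubly-pointed bordered Heegaard diagram $\mathcal{H}_1$ for $(Y_1,K)$ --- with one basepoint $w$ recording the ambient manifold and a second basepoint $z$ recording $K$ --- together with a bordered Heegaard diagram $\mathcal{H}_2$ for $Y_2$, arranged so that $\partial\mathcal{H}_1$ and $-\partial\mathcal{H}_2$ agree as pointed matched circles. Then $\mathcal{H}=\mathcal{H}_1\cup_\partial\mathcal{H}_2$ is a genuine doubly-pointed Heegaard diagram for $(Y,K)$, and since $Y_2$ has torus boundary its bordered invariant may be represented by a bounded type-$D$ structure, so $\CFAhat(Y_1,K)\boxtimes\CFDhat(Y_2)$ is defined with finite sums. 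The generators of $g\CFKhat(\mathcal{H})$ are exactly the intersection points restricting to generators on each side that occupy complementary $\alpha$-arcs, i.e.\ carrying complementary idempotents; this identifies the underlying $\F$-vector space of $g\CFKhat(\mathcal{H})$ with $\CFAhat(Y_1,K)\otimes_{\mathcal{I}}\CFDhat(Y_2)$. Here I would also verify that the relative Alexander grading on $\mathcal{H}$ determined by $(w,z)$ --- which exists precisely because $K$ is null-homologous in $Y$ --- matches the grading on the box tensor product, so that the two sides are being compared as filtered complexes.

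Next I would compare differentials. Stretching the neck along the splitting torus and invoking the relevant (symplectic-field-theory style) compactness, any holomorphic curve contributing to $\partial$ on $\mathcal{H}$ breaks into a holomorphic curve on the $Y_2$ side carrying some collection of Reeb-chord asymptotics along the boundary, together with a matched curve on the $Y_1$ side carrying the complementary asymptotics; the curves on the $Y_1$ side (counted with multiplicity via $w$ and filtered via $z$) are precisely those enumerated by the operations $m_{i+1}$ of $\CFAhat(Y_1,K)$, and the curves on the $Y_2$ side are precisely those enumerated by the coefficient maps $\delta_i$ of $\CFDhat(Y_2)$. Conversely, a gluing argument shows that for a sufficiently stretched almost complex structure each such matched pair of curves glues to a unique curve in $\mathcal{H}$. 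Putting the two directions together identifies the differential on $g\CFKhat(\mathcal{H})$, for such an almost complex structure, with $\partial^{\boxtimes}$ term by term; since the filtered knot Floer complex is independent of the diagram and the almost complex structure up to filtered homotopy equivalence, this yields the asserted $\simeq$.

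The step carrying essentially all of the content is this degeneration-and-gluing analysis: proving that the zero-dimensional moduli spaces on the stretched diagram are in bijection with fibered products of zero-dimensional moduli spaces on the two sides with matching Reeb asymptotics, and that nothing is lost to boundary bubbling, sphere bubbling, or level-splitting invisible to the algebra. This rests on the transversality, compactness, and gluing theorems developed in \cite{LOT}, and in a self-contained account it is the hard part; the idempotent and grading book-keeping, and the reduction to a statement about a single glued diagram, are by comparison routine. Since the statement is verbatim \cite{LOT}*{Theorem~11.19}, in the present paper I would simply invoke it, the foregoing being a sketch of the argument given there.
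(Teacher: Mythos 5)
The paper states this result purely as a citation of \cite{LOT}*{Theorem 11.19} and does not reprove it, which you correctly observe at the end of your proposal. Your sketch of the strategy in \cite{LOT} --- gluing bordered Heegaard diagrams, matching generators via complementary idempotents, tracking the $(w,z)$ Alexander filtration, and identifying differentials through neck-stretching degeneration and gluing of holomorphic curves with matched Reeb-chord asymptotics --- is an accurate high-level account of the argument given there.
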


We will give the immersed curve interpretation of this pairing theorem due to \cite{Chen} for $(1,1)$ patterns in section \ref{(1,1)pairingtheorem}. First, we will describe in more detail how to compute and interpret $\CFDhat(S^3\setminus \nu(K))$ and $\CFAhat(S^1\times D^2,P)$ as immeresed curves in the punctured torus in the next two sections. 

\subsection{$\CFDhat(S^3\setminus \nu(K))$ from $\CFK^{-}(K)$}\label{CFDfromCFK}
In this section, we recall the algorithm from \cite{LOT}*{Section 11.5} for computing $\CFDhat(S^3\setminus \nu(K))$ from $\CFK^{-}(K)$. For the definitions of reduced, filtered basis, we refer the reader to the original source (see also \cite{geography}).
We call a filtered reduced basis over $\F[U]$ vertically simplified if for each basis element $x_i$ exactly one of the following conditions is satisfied

\begin{itemize}

\item There is a unique incoming vertical arrow, and no outgoing vertical arrow, or
\item There is a unique outgoing vertical arrow and no incoming vertical arrow, or
\item There are no vertical arrows. 

\end{itemize}

A horizontally simplified basis is defined similarly, replacing vertical by horizontal in the above. Given a knot $K$ and a framing $n$, exists a pair of bases $\teta=\{\teta_1,\dots,\teta_{2k}\}$ and $\txi=\{\txi_1,\dots,\txi_{2k}\}$ for $CFK^{-}(K)$ that are horizontally and vertically simplified respectively. They are indexed so that for every pair $\teta_{2i-1}$ and $\teta_{2i}$ there is a horizontal arrow of length $l_i\geq 1$ connecting them and similarly, there is a vertical arrow of length $k_i\geq 1$ from $\txi_{2i-1}$ to $\txi_{2i}$. There are corresponding bases $\xi=\{\xi_0,\dots,\xi_{2k}\}$ and $\eta=\{\eta_0,\dots,\eta_{2k}\}$ for $\iota_0\CFDhat(X_k,n)$ so that if $\txi_j=\sum_{i=0}^{2k} a_{ij}\teta_i$ and $\eta_j=\sum_{i=0}^{2k}b_{ij}\txi_i$, then the corresponding change of bases formulas hold with the coefficients restricted to $U=0$. The summand $\iota_1\CFDhat$ has basis $$\bigcup_{i=1}^k\{\kappa_1^i,\dots,\kappa_{k_i}^i\}\cup\bigcup_{i=1}^k \{\lambda_1^i,\dots,\lambda_{l_i}^i\} \cup\{\mu_1,\dots,\mu_{|2\tau(K)-n|}\}$$

There are non-zero coefficient maps induced from the horizontal and vertical arrows in the complex for $CFK^{-}$ as follows. A length $k_i$ vertical arrow from $\xi_{2i-1}$ to $\xi_{2i}$ induces coefficient maps

$$\xi_{2i-1} \xrightarrow{\rho_1} \kappa_1^i \xleftarrow{\rho_{23}} \kappa_2^{i} \dots\xleftarrow{\rho_{23}}\kappa^i_{k_i}\xleftarrow{\rho_{123}}\xi_{2i}$$

Similarly, for each length $l_i$ horizontal arrow from $\eta_{2i-1}$ to $\eta_{2i}$, we get coefficient maps

$$\eta_{2i-1} \xrightarrow{\rho_3} \lambda_1^i \xrightarrow{\rho_{23}} \lambda^i_{2}\xrightarrow{\rho_{23}} \dots \xrightarrow{\rho_{23}}\lambda_{l_i}^i \xrightarrow{\rho_2} \eta_{2i}$$

Additionally, there are coefficient maps from $\xi_0$ to $\eta_{0}$ depending on the framing and the value of the invariant $\tau(K)$.

\begin{itemize}

\item $\xi_0 \xrightarrow{\rho_{12}} \eta_0 \quad \text{if} \quad n=2\tau(K)$

\item $\xi_0 \xrightarrow{\rho_1} \mu_1 \xleftarrow{\rho_{23}} \dots \xleftarrow{\rho_{23}} \mu_m \xleftarrow{\rho_3} \eta_0  \quad \text{if} \quad n<2\tau(K) \quad m=2\tau(K)-n$

\item $\xi_0 \xrightarrow{\rho_{123}} \mu_1 \xrightarrow{\rho_{23}} \dots \xrightarrow{\rho_{23}} \mu_m \xrightarrow{\rho_2} \eta_0  \quad \text{if} \quad n>2\tau(K), \quad m=n-2\tau(K)$

\end{itemize}

For example, for the knot $K=T_{2,3}$, the right-handed trefoil, $\CFK^{-}(T_{2,3})$ has a simultaneously vertically and horizontally simplified $\F[U]$ basis $\{\txi_0,\txi_1,\txi_2\}$ with differential given by $\partial(\txi_1)=U\txi_0+\txi_2$. Applying the above algorithm, we get the type $D$ structure shown in figure \ref{trefoil0framed}.

\begin{figure}[!tbp]
\begin{center}
\begin{tikzcd}
\xi_0\arrow{dr}{\rho_1} & & \lambda \arrow{ll}[swap]{\rho_2} & \xi_1 \arrow{l}[swap]{\rho_3}\arrow{dd}{\rho_1}\\
& \mu_1 & &\\
&& \mu_2 \arrow{ul}{\rho_{23}}& \kappa \\
&&& \xi_2 \arrow{u}[swap]{\rho_{123}}\arrow{ul}{\rho_3}
\end{tikzcd}
\caption{Type D structure for $0$-framed right handed trefoil complement}\label{trefoil0framed}
\end{center}
\end{figure}
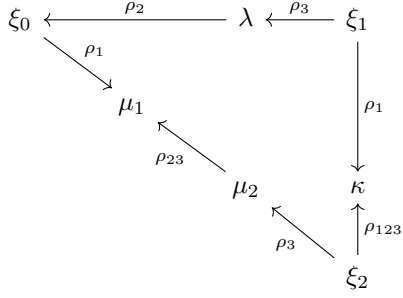

For any knot $K$ in $S^3$, there is always a vertically distinguished element of a horizontally simplified basis, which is an element in a horizontally simplified basis with no incoming or outgoing vertical arrows. Similarly, there is a horizontally distinguished element of a vertically simplified basis. In \cite{Hom}*{Lemma 3.2}, she shows that it is always posible to find a horizontally simplified basis for $\CFK^{\infty}(K)$ so that one of the horizontal basis elements $\xi_0$ is the vertically distinguished generator of some vertically simplified basis. Note that the concordance invariant $\epsilon(K)$ can be defined in terms of the generator $\xi_0$: If $\xi_0$ occurs at the end of a horizontal arrow, then $\epsilon(K)=1$, if $\xi_0$ occurs at the beginning of a horizontal arrow, then $\epsilon(K)=-1$. If there is no horizontal arrow to or from $\xi_0$, then $\epsilon(K)=0$.

\subsection{Immersed Curves for knot complements}\label{immersedD}

Given a type $D$ structure over the torus algebra, like $\CFDhat(S^3\setminus \nu(K))$, the work in \cite{HRW} shows how we can represent it as an immersed multicurve with local systems in the torus, which we now describe. The first step is to construct a decorated graph from the type D structure. Let $N$ be a type $D$ strucutre over the torus algebra, and let $N_i=\iota_iN$. This gives a decomposition $N=N_0\oplus N_1$. Given bases $B_i$ of $N_i$, for $i=0,1$, we construct a decorated graph $\Gamma$ as follows. The vertices of $\Gamma$ are in correspondence with the basis elements and are labelled $\tikzcircle{2pt}$ or $\tikzcirc{2pt}$ depending on if the vertex corresponds to a basis element in $B_0$ or $B_1$ respectively. Suppose now that we have two vertices corresponding to basis elements $x$ and $y$ such that $\delta(x)=\rho_I\otimes y +\cdots$, for $I \in \{\emptyset,1,2,3,12,23,123\}$. In this case we put an edge labelled $\rho_I$ from $x$ to $y$. A decorated graph is called reduced if no edge labelled by $\rho_{\emptyset}$ appears. The next step is to take a decorated graph and turn it into an immersed train track in the punctured torus. Let $T^2=\R^2/\Z^2$ and let $w=(1-\epsilon,1-\epsilon)$ be a basepoint. Let $\mu$ and $\lambda$ be the images of the $x$ and $y$ axes respectively and embed the vertices of $\Gamma$ into $T^2$ so that the $\tikzcircle{2pt}$ vertices lie on $\lambda$ in the interval $\{0\} \times [\frac{1}{4},\frac{3}{4}]$ and the $\tikzcirc{2pt}$ vertices lie on $\lambda$ in the interval $[\frac{1}{4},\frac{3}{4}]\times \{0\}$. Then we embed the edges into the torus according to the rules shown in \cite{HRW}*{Figure 19} (see also figure \ref{typeDtrefoilcurve}). In general this train track is not ncessarily an immersed curve, but work in \cite{HRW} shows that for type $D$ structures that arise from 3-manifolds with torus boundary one can always choose a nice basis so that the train track is an immersed curve (possibly with local systems). For example, we construct the immersed curve associated to the trefoil complement in figure \ref{typeDtrefoilcurve}, where for example the arc from $\xi_1$ to $\kappa$ indicates the presence of a $\rho_1$ edge from $\xi_1$ to $\kappa$ in the decorated graph. We will denote this immersed curve by $\alpha(K)$.


\begin{figure}
\begin{center}
\begin{tikzpicture}

\node[anchor=south west,inner sep=0] at (0,0)    {\includegraphics[scale=.2]{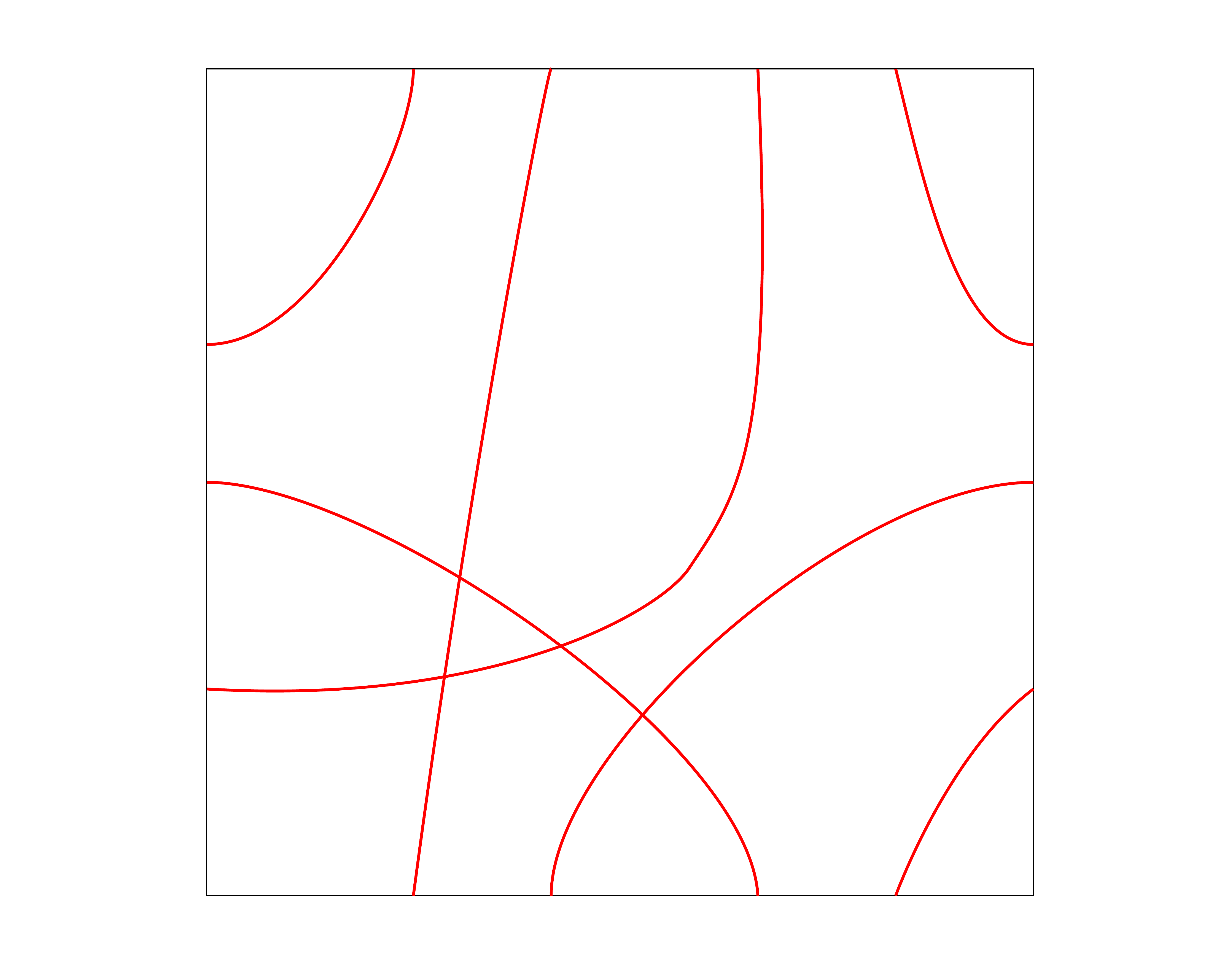}};
\node at (5.5,.4) {$\lambda$};
\node at (5.6,.6) {$\tikzcirc{2pt}$};
\node at (5.5,7) {$\lambda$};
\node at (5.6,6.7) {$\tikzcirc{2pt}$};
\node at (3,.4) {$\mu_2$};
\node at (3.1,.6) {$\tikzcirc{2pt}$};
\node at (3,7) {$\mu_2$};
\node at (3.1,6.7) {$\tikzcirc{2pt}$};
\node at (4,.4) {$\mu_1$};
\node at (4.1,.6) {$\tikzcirc{2pt}$};
\node at (4,7) {$\mu_1$};
\node at (4.1,6.7) {$\tikzcirc{2pt}$};
\node at (6.6,.4) {$\kappa$};
\node at (6.6,.62) {$\tikzcirc{2pt}$};
\node at (6.6,7) {$\kappa$};
\node at (6.6,6.7) {$\tikzcirc{2pt}$};
\node at (1,2.1) {$\xi_1$};
\node at (1.5,2.1) {$\tikzcircle{2pt}$};
\node at (1,3.6) {$\xi_0$};
\node at (1.5,3.65) {$\tikzcircle{2pt}$};
\node at (1,4.8) {$\xi_2$};
\node at (1.5,4.7) {$\tikzcircle{2pt}$};
\node at (8,2.1) {$\xi_1$};
\node at (7.6,2.15) {$\tikzcircle{2pt}$};
\node at (8,3.6) {$\xi_0$};
\node at (7.6,3.65) {$\tikzcircle{2pt}$};
\node at (8,4.8) {$\xi_2$};
\node at (7.6,4.7) {$\tikzcircle{2pt}$};
\node[font=\tiny] at (7.2,6.3) {$\rho_{123}$};
\node[font=\tiny] at (7.5,.9) {$\rho_1$};
\node[font=\tiny] at (1.7,.9) {$\rho_2$};
\node[font=\tiny] at (1.7,6.3) {$\rho_3$};
\end{tikzpicture}
\caption{The immersed curve associated to the $0$ framed trefoil complement}\label{typeDtrefoilcurve}
\end{center}
\end{figure}

\subsubsection{Properties of Immersed Multicurves for Knot Complements}\label{concordance}

In this section we recall how the immersed curve $\alpha(K)$ encodes the concordance invariants $\tau(K)$ and $\epsilon(K)$ as well as the genus of the knot $g(K)$. In order to do this, we fix a representative of the lift of the immersed curve to the universal cover, called the \textit{peg-board representative} of the immersed curve. This is discussed in \cite{HRW}*{Section 4.2}. In brief, we assume that we have chosen a minimal length representative of the immersed multicurve. Given a peg-board representative of $\alpha(K)$, the genus of the knot is half the maximal number of pegs between the minimum and maximum height attained by the immersed curve. The invariants $\tau(K)$ and $\epsilon(K)$ are related to the essential component $\gamma_0$ of the immersed curve, see \cite{HRW} and \cite{cabling}*{Proposition 2}. The essential component $\gamma_0$ is the unique non-vertical segment of the immersed curve, in the sense that all other components are supported in a neighborhood of the meridian, and the component $\gamma_0$ wraps once around the cylinder (in the covering of the torus corresponding to the longitudinal subgroup). As mentioned in \cite{HRW1}*{Remark 50} this component does not carry any non-trivial local system as only one curve component can wrap around the cylinder (since otherwise the meridional filling would have rank $\geq 2$). This observation, together with the discussion surrounding \cite{Hom}*{Lemma 3.2} in Section \ref{CFDfromCFK} implies the following lemma concerning the shape of the essential component of $\alpha(K)$ lifted to the universal cover.

\begin{lemma}\label{essentialcomponent}
Suppose that $K$ is a knot in $S^3$ and that $\gamma_0$ is the essential curve component of $\alpha(K)$ lifted to the universal cover. 
\begin{itemize}
\item If $\epsilon(K)=1$ and $\tau(K)\geq0$ $\gamma_0$ slopes upwards for $2\tau(K)$ rows and turns down at the top and up at the bottom

\item If $\epsilon(K)=-1$ and $\tau(K)\geq0$, then $\gamma_0$ slopes upwards for $2\tau(K)$ rows and turns up at the top and down at the bottom

\item If $\epsilon(K)=1$ and $\tau(K)\leq0$ then $\gamma_0$ slopes downwards for $2\tau(K)$ rows and turns down at the bottom at up at the top

\item If $\epsilon(K)=-1$ and $\tau(K)\leq0$ then $\gamma_0$ slopes downwards for $2\tau(K)$ rows and turns up at the bottom and down at the top. 

\item If $\epsilon(K)=0$, then $\tau(K)=0$ and $\gamma_0$ is horizontal at height $0$. 
\end{itemize}

In each case the remaining portion of the essential component of the immersed curve and any other component of the immersed curve are contained in a neighborhood of the meridian. 
\end{lemma}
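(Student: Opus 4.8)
The plan is to unwind the two constructions already recalled in this section: the algorithm of Section~\ref{CFDfromCFK} computing $\CFDhat(S^{3}\setminus\nu(K))$ from $\CFK^{-}(K)$, and the passage from a reduced decorated graph to an immersed train track via the rules of \cite{HRW}*{Figure~19}. Throughout I would work with the $0$-framing $n=0$, so that the three displayed cases for the coefficient maps between the distinguished generators $\xi_{0}$ and $\eta_{0}$ are governed precisely by the sign of $\tau(K)$: a single $\rho_{12}$ edge when $\tau(K)=0$; a chain $\xi_{0}\xrightarrow{\rho_{1}}\mu_{1}\xleftarrow{\rho_{23}}\cdots\xleftarrow{\rho_{23}}\mu_{m}\xleftarrow{\rho_{3}}\eta_{0}$ with $m=2\tau(K)$ when $\tau(K)>0$; and a chain $\xi_{0}\xrightarrow{\rho_{123}}\mu_{1}\xrightarrow{\rho_{23}}\cdots\xrightarrow{\rho_{23}}\mu_{m}\xrightarrow{\rho_{2}}\eta_{0}$ with $m=-2\tau(K)$ when $\tau(K)<0$.

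The first step is to pin down the essential component. Fixing a basis as in \cite{Hom}*{Lemma~3.2}, the generator $\xi_{0}$ is the vertically distinguished generator of a vertically simplified basis and is simultaneously a horizontal basis element, so that $\epsilon(K)$ records whether $\xi_{0}$ is the head of a horizontal arrow, the tail of one, or neither, while $\eta_{0}$ is the horizontally distinguished generator and carries no horizontal arrows. Every coefficient map produced by the algorithm other than the $\xi_{0}$--$\eta_{0}$ chain comes from a vertical or horizontal arrow of $\CFK^{-}(K)$ and assembles, under the Figure~19 rules, into the $\kappa$- and $\lambda$-chains, which are supported in a neighborhood of the meridian; by \cite{HRW1}*{Remark~50} exactly one curve component wraps around the cylinder and it carries the trivial local system. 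Hence $\gamma_{0}$ is the component through $\xi_{0}$ and $\eta_{0}$, and the only portion of it leaving a neighborhood of $\mu$ is the segment coming from the $\xi_{0}$--$\eta_{0}$ chain together with the single horizontal arrow incident to $\xi_{0}$ (present exactly when $\epsilon(K)\neq 0$).

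The second step is the local translation. Reading the chain through the $\mu_{i}$'s against \cite{HRW}*{Figure~19}, the $\rho_{1}$/$\rho_{3}$ terminal edges force $\gamma_{0}$ to climb monotonically through $m=2\tau(K)$ rows of the lattice of pegs, the $\rho_{123}$/$\rho_{2}$ variant forces it to descend through $2|\tau(K)|$ rows, and the lone $\rho_{12}$ edge keeps it at height $0$; the sign of $\epsilon(K)$, recorded by the direction of the horizontal arrow at $\xi_{0}$, then fixes the side to which $\gamma_{0}$ curls at the top and at the bottom of the climbing region, yielding the four ``turns up/down at top/bottom'' configurations. The $\tau(K)<0$ statements follow from the $\tau(K)\ge 0$ ones by the vertical-reflection symmetry exchanging the two variants of the chain, and the case $\epsilon(K)=0$ is the degenerate one with no climb. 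Comparing with the worked trefoil picture ($\epsilon=1$, $\tau=1$: two rows of climb, turning down at the top and up at the bottom) confirms the signs. The remaining arcs of $\gamma_{0}$, and every other component of $\alpha(K)$, visibly lie in a neighborhood of $\mu$.

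I expect the genuine difficulty to be bookkeeping rather than anything conceptual: one must track carefully which side of each peg the various $\rho_{I}$ arcs pass in \cite{HRW}*{Figure~19} and how the $\iota_{1}$ generators $\mu_{i}$ are stacked in the universal cover, so that the number of rows climbed comes out to exactly $2\tau(K)$, and one must argue cleanly why the turns at \emph{both} extremes of the climb are governed by $\epsilon(K)$ and not by independent data at the $\eta_{0}$ end. Neither point requires input beyond the algorithm of Section~\ref{CFDfromCFK}, Hom's basis lemma, and the uniqueness of the wrapping component recalled above.
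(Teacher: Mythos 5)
Your proposal follows essentially the same route as the paper: unwind the $\CFDhat$ algorithm of Section~\ref{CFDfromCFK} with Hom's special basis from \cite{Hom}*{Lemma~3.2}, translate the unstable $\xi_0$--$\eta_0$ chain together with the single horizontal arrow at $\xi_0$ via the \cite{HRW}*{Figure~19} rules, and invoke uniqueness of the wrapping component from \cite{HRW1}*{Remark~50} to confine everything else to a neighborhood of the meridian. The one step you flag at the end as needing a careful argument --- why the turn at the $\eta_0$ end is also dictated by $\epsilon(K)$ rather than by independent data at $\eta_0$ --- is resolved in the paper by a short appeal to the symmetry of the immersed curve under the elliptic involution: once the turn at the top is read off from the $\rho_2$ (resp.\ $\rho_3$) edge into $\xi_0$, the turn at the bottom is forced to be its mirror image, so only the $\xi_0$ end needs to be computed directly from the decorated graph. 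Adding that one sentence would close the gap you correctly identified.
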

\begin{proof}
We will show that the immersed curve has the claimed form in the case that $\tau(K)>0$ and $\epsilon(K)=1$. The rest of the cases are similar. As mentioned above, in \cite{Hom}, Hom constructs a horizontally simplified basis $\{\xi_0,\eta_0,\cdots,\eta_N\}$ so that $\xi_0$ is the distiguished element in a vertically simplified basis with no incoming or outgoing vertical arrows. In the case $\epsilon(K)=1$, this generator appears at the end of a horizontal arrow. Suppose that $\eta_1 \xrightarrow{} \xi_0$ is a length $l$ arrow from $\eta_1$ to $\xi_0$. In this case, the portion of $\CFDhat(S^3\setminus \nu(K))$ has the following form: From the length $l$ horizontal arrow from $\eta_1$ to $\xi_0$, the algorithm in \cite{LOT} produces a sequence of type $D$ operations


$$\eta_{1} \xrightarrow{\rho_3} \lambda_1^1 \xrightarrow{\rho_{23}} \lambda^1_{2}\xrightarrow{\rho_{23}} \dots \lambda_{l}^1 \xrightarrow{\rho_2} \xi_{0}$$

(Note that this part of the type D operations or immersed curve is what changes when $\epsilon(K)$ changes sign)

Since $\tau(K)>0$, the unstable chain takes the form

$$\xi_0 \xrightarrow{\rho_1} \mu_1 \xleftarrow{\rho_{23}}\cdots \xleftarrow{\rho_{23}} \mu_{2\tau(K)}\xleftarrow{\rho_{3}} \eta_0$$

(Note that this part of the type D operations or immersed curve is what changes when $\tau(K)$ changes sign)

Using the procedure described in \cite{HRW1} and the previous section, this decorated graph becomes the portion of the immersed curve shown in figure \ref{tauepsilonposimmersedcurve}. As claimed, the immersed curve slopes upwards for $2\tau(K)$ rows, turns down at the top (from the $\rho_2$ from $\lambda_l^1$ to $\xi_0$) and turns up at the bottom by the symmetry of the immersed curve under the elliptic involution. The remaining bullet points follow similarly.

The fact that the remaining portion of the immersed curve is contained in a neighborhood of the meridian follows since the meridional filling of any knot complement has rank one. If any other component wrapped around the longitude, this would imply that the meridional filling has rank $\geq 2$. \qedhere

\begin{figure}
\begin{center}
\begin{tikzpicture}

\node[anchor=south west,inner sep=0] at (0,0)    {\includegraphics[scale=.2]{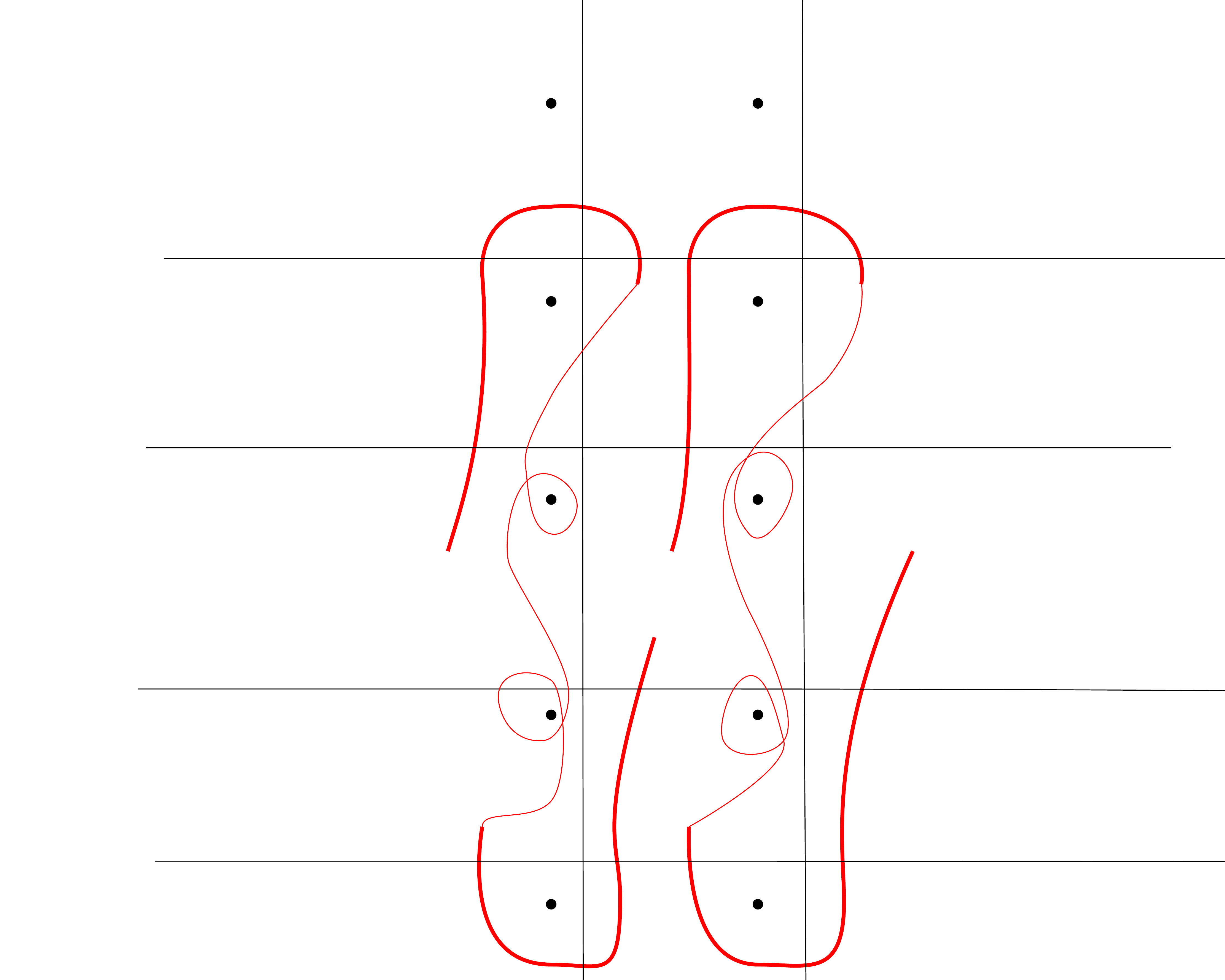}};
\node[font=\tiny] at (4.2,.65) {$w$};
\node[font=\tiny] at (4.2,2.1) {$w$};
\node[font=\tiny] at (4.2,3.7) {$w$};
\node[font=\tiny] at (4.2,5.2) {$w$};
\node[font=\tiny] at (4.2,6.5) {$w$};
\node[font=\tiny] at (5.7,.65) {$w$};
\node[font=\tiny] at (5.7,2.1) {$w$};
\node[font=\tiny] at (5.7,3.7) {$w$};
\node[font=\tiny] at (5.7,5.2) {$w$};
\node[font=\tiny] at (5.7,6.5) {$w$};
\node at (3.55,.9) {$\tikzcirc{2pt}$};
\node[font=\tiny] at (4.8,.9) {$\mu_{m}$};
\node[font=\tiny] at (5.2,2.15) {$\mu_{m-1}$};
\node[font=\tiny] at (4.9,4.8) {$\rho_{23}$};
\node[font=\tiny] at (3.3,.3) {$\rho_{123}$};
\node[font=\tiny] at (4.8,.3) {$\rho_3$};
\node at (4.75,2.15) {$\tikzcirc{2pt}$};
\node[font=\tiny] at (4.55,3) {$\rho_{23}$};
\node at (4.9,3) {$\vdots$};
\node[font=\tiny] at (4.3,1.5) {$\rho_{23}$};
\node at (5.05,3.9) {$\tikzcirc{2pt}$};
\node[font=\tiny] at (5.3,3.9) {$\mu_2$};
\node at (4.55,.9) {$\tikzcirc{2pt}$};
\node at (4.3,.12) {$\tikzcircle{2pt}$};
\node[font=\tiny] at (4.3,.25) {$\eta_0$};
\node at (3.55,5.3) {$\tikzcirc{2pt}$};
\node at (4.3,5.7) {$\tikzcircle{2pt}$};
\node[font=\tiny] at (4.3,6) {$\xi_0$};
\node[font=\tiny] at (3.5,5.6) {$\rho_1$};
\node[font=\tiny] at (3.3,5.3) {$\mu_1$};
\node[font=\tiny] at (4.85,5.7) {$\rho_2$};
\node[font=\tiny] at (5,5.3) {$\lambda_l^1$};
\node at (4.75,5.3) {$\tikzcirc{2pt}$};
\end{tikzpicture}
\caption{The essential component of the immersed curve for a knot $K$ with $\tau(K)>0$ and $\epsilon(K)=1$. The curve crosses at heights $-\tau(K)$ and $\tau(K)$. The lighter portion of the curve indicates that $\gamma_0$ is potentially immersed in the punctured torus, but is contained in a small neighborhood of the meridian, along with all the other components of the immersed multicurve $\alpha(K)$}\label{tauepsilonposimmersedcurve}
\end{center}
\end{figure}
\end{proof}

\subsection{$\CFAhat(S^1\times D^2,P)$ for $(1,1)$-patterns $P\subset S^1\times D^2$}\label{CFA(1,1)}
As we saw in the previous section, the type $D$ structure from the pairing theorem can be obtained algorithmically from knowledge of $\CFK^{-}(K)$. For the type A side, there is no such algorithm for determining $\CFAhat(S^1\times D^2,P)$ in terms of $\CFK^-(P(U))$. However, when the pattern $(S^1\times D^2,P)$ admits a particular type of Heegaard diagram, called a genus-$1$ doubly-pointed bordered Heegaard diagram, we can compute $\CFAhat(S^1\times D^2,P)$ directly. We now describe how to do this. First, we introduce the notation of a genus 1 doubly-pointed bordered Heegaard diagram.

\begin{definition}
A genus-1 doubly-pointed bordered Heegaard diagram is a five tuple $(\Sigma,\alpha^a,\beta,w,z)$. Here $\Sigma$ is a compact oriented surface of genus $1$ with a single boundary component. The alpha arcs $\alpha^a=(\alpha_1^a,\alpha_2^a)$ are a pair of properly embedded, disjoint arcs in $\Sigma$ with a fixed order to the intersections $\alpha^a \cap \partial\Sigma$. The basepoint $w$ lies on the boundary of $\Sigma$ in the complement of the endpoints of the $\alpha$ arcs; i.e. $w\subset\partial\Sigma \setminus \partial \alpha^a$. The resulting subdivision of $\partial \Sigma$ results in the data of a pointed matched circle. The $\beta$-curve is an embedded closed loop in $\Sigma$ so that $\beta$ is transverse to the $\alpha$-arcs and the complement $\Sigma \setminus \beta$ is connected. Furthermore, we place a basepoint $z$ in the interior of $\Sigma$ without the $\alpha$-arcs and $\beta$-circles, so that if we forget the $z$ basepoint, the $\beta$ curve is isotopic to $\alpha^a_2$. 

\end{definition}

This data specifies a three manifold with torus boundary together with a knot. The three manifold and knot can be recovered by the following recipe. Attach a two-handle to $\Sigma \times [0,1]$ along $\beta \times \{1\}$. The knot is specified by connecting the $z$ basepoint to the $w$ basepoint in the complement of $\beta$ and pushing the arc into the handlebody compressed by the $\beta$ curve and connecting $w$ to $z$ in the complement of $\alpha^a$ in $\Sigma$. Note that the $\alpha$-arcs are the cores of the $1$-handles of the boundary torus. 
In our case, we have $\alpha_1^a=\lambda$ and $\alpha_2^a=\mu$ the longitude and meridian of the torus boundary $\partial(S^1\times D^2)$. See figure \ref{P_3doublypointeddiagram} for an example of a genus 1 doubly pointed bordered Heeggard diagram. Note that by definition we have $\beta \cdot \mu=0$ and $\beta \cdot \lambda=1$ since if we forget the $z$ basepoint the $\beta$ curve is isotopic to the meridian. We orient the meridian as shown in figure \ref{P_3doublypointeddiagram} and the $\beta$ curve inherits an induced orientation from the meridian.

\begin{figure}
\begin{center}
\begin{tikzpicture}
\node at (5,7.8) {$\lambda=\alpha_1^a$};
\node at (1,4) {$\mu=\alpha_2^a$};
\node[anchor=south west,inner sep=0] at (0,0)    {\includegraphics[scale=.22]{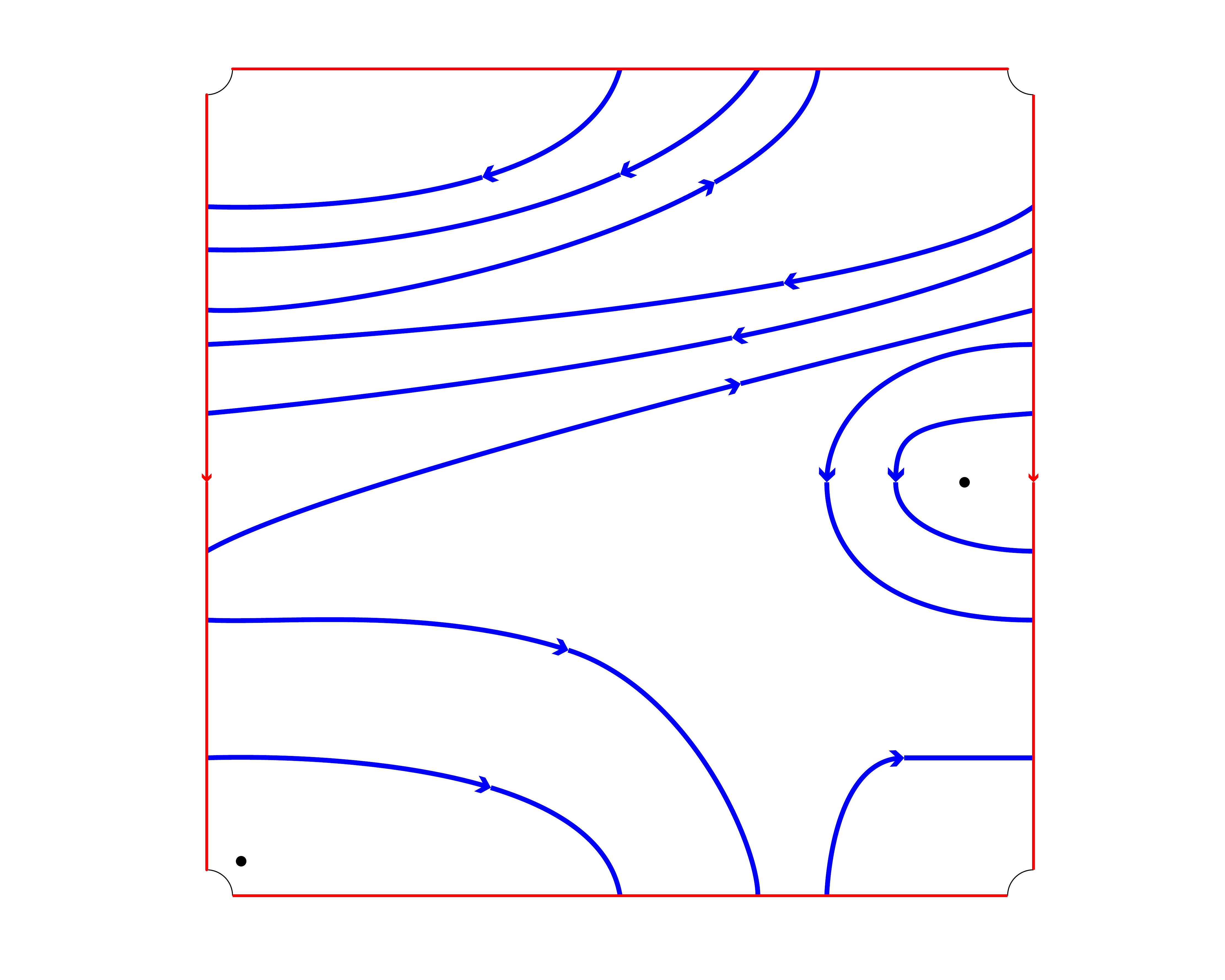}};
\node[font=\footnotesize] at (5,.5) {$x_0$};
\node[font=\footnotesize] at (6.1,.5) {$x_1$};
\node[font=\footnotesize] at (6.8,.5) {$x_2$};
\node[font=\footnotesize] at (2,1.2) {$w$};
\node[font=\footnotesize] at (8.6,6.3) {$y_0$};
\node[font=\footnotesize] at (8.6,5.9) {$y_1$};
\node[font=\footnotesize] at (8.6,5.4) {$y_2$};
\node[font=\footnotesize] at (8.6,5.1) {$y_3$};
\node[font=\footnotesize] at (8.6,4.6) {$y_4$};
\node[font=\footnotesize] at (8.6,3.45) {$y_5$};
\node[font=\footnotesize] at (8.6,2.95) {$y_6$};
\node[font=\footnotesize] at (8.6,1.8) {$y_7$};
\node[font=\footnotesize] at (8.1,4) {$z$};
\end{tikzpicture}
\caption{The genus $1$ doubly pointed Heegaard diagram for the pattern $P_{3,1}$}\label{P_3doublypointeddiagram}
\end{center}
\end{figure}

Now we describe how to obain $\CFAhat(S^1\times D^2,P)$ from a given genus $1$ doubly pointed bordered Heegaard diagram. As an $\F$ vector space $\CFAhat(S^1\times D^2,P)$ is generated by elements of the set 
$$\mathcal{G}=\{x| x \in\beta \cap \alpha^a\}.$$ For each $x \in\mathcal{G}$, we have the following right action of the idempotent subalgebra $\mathcal{I}$: $x\cdot \iota_0=x$ if $x \in \alpha_1^a \cap \beta$ and $x \cdot \iota_0=0$ otherwise. Similarly, $x \cdot \iota_1=x$ if $x \in \alpha_2^a \cap \beta$ and $x \cdot \iota_1=0$ otherwise. 

Now, regard the surface-with-boundary $\Sigma$ as $T^2 \setminus D^2$. Let $\R^2 \to T^2$ denote the universal cover of the torus, and set $\tilde{\Sigma}$ to be the covering space obtained from $\R^2$ by removing the lifts of $D^2$. Using this covering space, we define the maps
$$m_{n+1}: M \otimes \mathcal{A}^{\otimes n} \to M$$ for $n \geq 0$ as follows. 

$$m_{n+1}(x,\rho_{I_1},\cdots,\rho_{I_n})=\sum_{y \in \mathcal{G}} \# \mathcal{M}(x,y)y$$ where $\#\mathcal{M}(x,y)$ is the mod $2$ count of index $1$ embedded disks in $\tilde{\Sigma}$ such that, when we traverse the boundary of the disk we start from a lift of $x$ and walk along an arc of some lift of $\alpha^a$ then along the arc $\rho_{I_1}$ on some lift of $\partial D^2$, \dots, then walk along some the arc $\rho_{I_n}$ and then along some lift of $\alpha^a$ to $y$ and finally along a lift of $\beta$ from $y$ to $x$. 

For example, consider the doubly pointed genus $1$ Heegaard diagram shown in figure \ref{P_3doublypointeddiagram}. The generators of $\CFAhat(P)$ in idempotent $\iota_0$ (intersection of $\beta$ with $\alpha^a_1$) are labelled $x_0,x_1,x_2$ from left to right and the generators in idempotent $\iota_1$ (intersections of $\beta$ with $\alpha_2^a$) are labelled $y_0, \cdots, y_7$ from top to bottom. We draw the lift to the cover $\tilde{\Sigma}$ in figure \ref{liftofP_{3,1}pattern} and indicate a few of the type $A$ operations given by the disks shown. The gray disk gives a $m_3(x_0,\rho_{12},\rho_1)=y_3$, the green disk gives $m_2(x_1,\rho_1)=y_1$ and the pink disk gives $m_3(y_1,\rho_2,\rho_1)=y_4$. The full type $A$ module $\CFAhat(S^1\times D^2,P_{3,1})$ is shown in figure \ref{CFAhat}. In that figure, an arrow labelled $\rho_{I_1},\rho_{I_2},\dots,\rho_{I_n}$ from $x$ to $y$ means there is an $A_{\infty}$ operation $m_{n+1}(x,\rho_{I_1},\dots,\rho_{I_n})=y$.

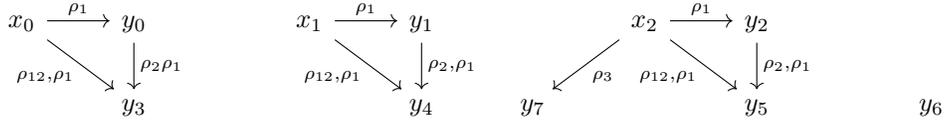
\begin{figure}[!tbp]
\begin{center}
\begin{tikzcd}
x_0 \arrow{dr}[swap]{\rho_{12},\rho_1} \arrow{r}{\rho_1} & y_0 \arrow{d}{\rho_{2}\rho_1} && x_1 \arrow{dr}[swap]{\rho_{12},\rho_1}\arrow{r}{\rho_1} &y_1 \arrow{d}{\rho_2,\rho_1} && x_2 \arrow{dr}[swap]{\rho_{12},\rho_1} \arrow{dl}{\rho_3}\arrow{r}{\rho_1} &y_2 \arrow{d}{\rho_2,\rho_1}\\

& y_3 &&& y_4 &y_7&  & y_5&& y_6 \\
\end{tikzcd}
\caption{$\CFAhat(\mathcal{H})$ where $\mathcal{H}$ is the doubly pointed bordered Heegaard diagram shown in figure \ref{P_3doublypointeddiagram}}\label{CFAhat}
\end{center}
\end{figure}

\begin{figure}
\begin{center}
\begin{tikzpicture}\hspace{-.9in}
\node[anchor=south west,inner sep=0] at (0,0)    {\includegraphics[scale=.4]{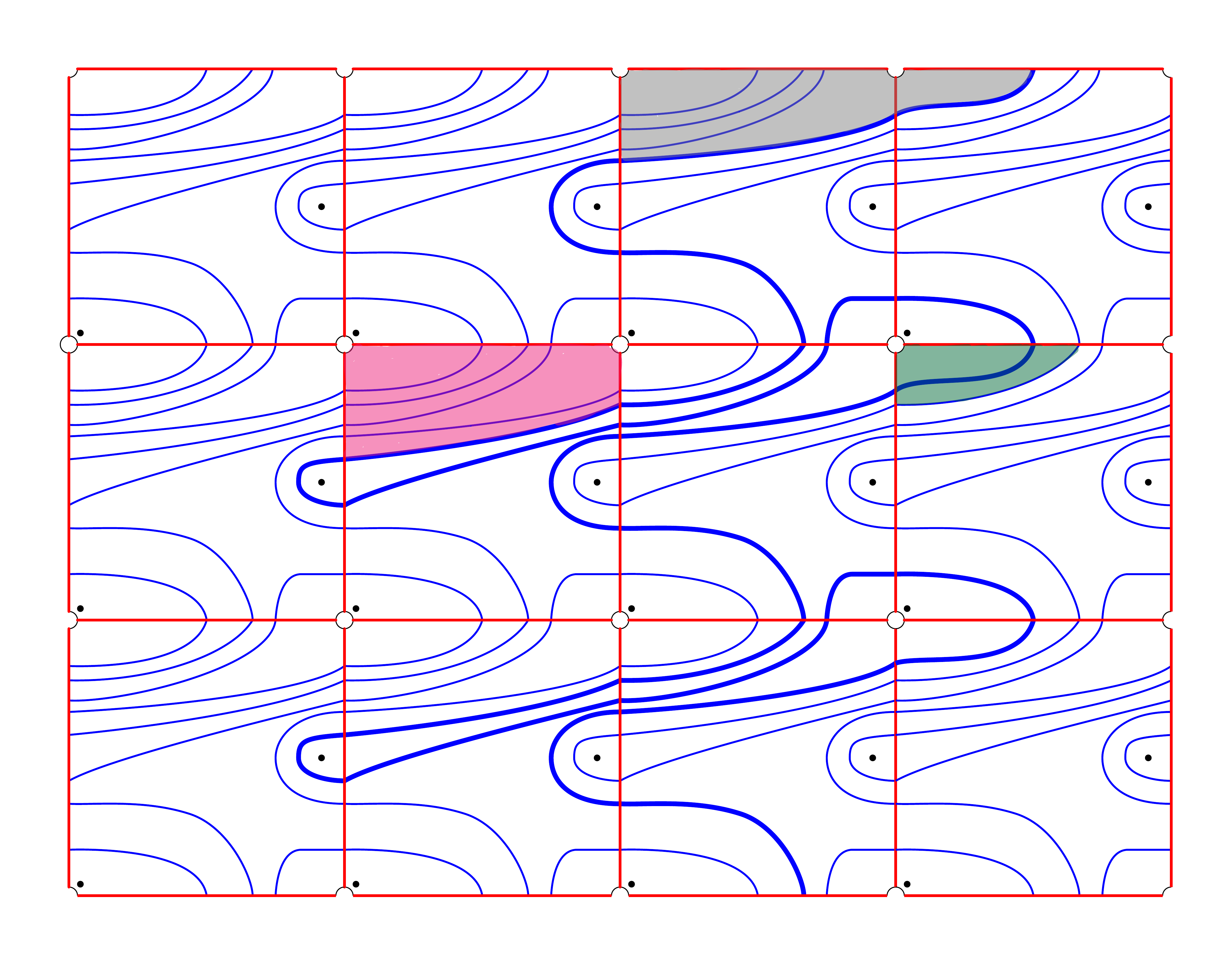}};
\node at (15.1,13.6) {$x_0$};
\node at (9,11.9) {$y_3$};
\node at (15.75,9.5) {$x_1$};
\node at (13,8.25) {$y_1$};
\node at (9.45,8.3)  {$y_1$};
\node at (4.85,7.8) {$y_4$};
\end{tikzpicture}
\caption{Lift of the pattern $P_{3,1}$ to the cover $\tilde{\Sigma}$ a single connected lift of $\beta$ is shown in bold}\label{liftofP_{3,1}pattern}
\end{center}
\end{figure}

\section{The pairing theorem for $(1,1)$ patterns}\label{(1,1)pairingtheorem}

\begin{figure}[!tbp]
\begin{center}
\begin{tikzpicture}
\node[anchor=south west, inner sep=0] at (0,0) {\includegraphics[scale=.2]{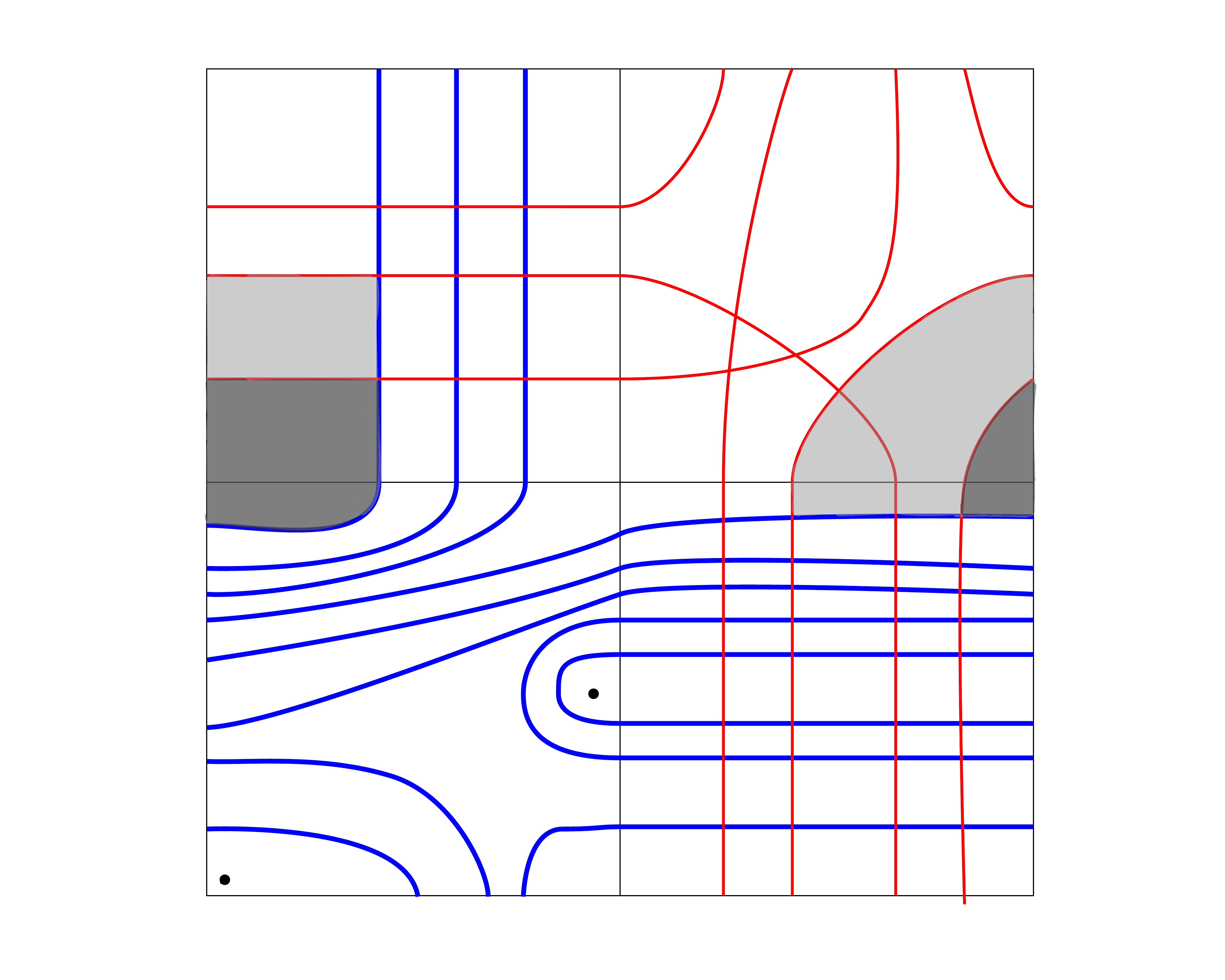}};
\node[font=\footnotesize]at (7.1,3.4) {$\tikzcircle{2pt}$};
\node[font=\tiny]at (7.25,3.2) {$y_0\kappa$};
\node at (5.85,3.35) {$\tikzcircle{2pt}$};
\node[font=\tiny] at (6.2,3.25) {$y_0\mu_1$};
\node[font=\tiny] at (2.8,4.2) {$x_0\xi_1$};
\node[font=\tiny] at (2.8,4.45) {$\tikzcircle{2pt}$};
\node[font=\tiny] at (2.8,5.2) {$\tikzcircle{2pt}$};
\node[font=\tiny] at (2.8,5) {$x_0\xi_0$};
\end{tikzpicture}
\caption{pairing diagram showing the trefoil pattern $P_{3,1}$ paired with $0$ framed trefoil companion}\label{P3pairingwith0framedtrefoil}
\end{center}
\end{figure}

The main result in \cite{Chen} is a reinterpretation of the pairing theorem from \cite{LOT}*{Theorem 11.19} in terms of immersed curves when $\CFAhat(S^1\times D^2,P)$ comes from a $(1,1)$ pattern $P$. In this section we recall this theorem. 

Let $\beta(P)$ denote the $\beta$ curve in the data of a genus one doubly pointed Heegaard diagram and let $\alpha(K)$ denote the immersed curve for $S^3\setminus \nu(K)$ as described in section \ref{immersedD}. Chen's theorem says that to compute $\HFKhat(S^3,P(K))$ we can compute the intersection Floer homology of $\alpha(K)$ and $\beta(P)$, denoted $\CFKhat(\alpha,\beta)$, in the torus as follows. Let $T^2=[0,1]^2/\sim$ and divide the square into four quadrants. Include the immersed curve $\alpha(K)$ into the first quadrant $[\frac{1}{2},1]\times[\frac{1}{2},1]$) and include $(\beta(P),w,z)$ into the third quadrant. Then extend both curves horizontally and vertically, so that $\alpha$ and $\beta$ intersect in the second and fourth quadrants only. In this set up intersections in the second quadrant correspond to generators of $\CFAhat(S^3,P)\boxtimes \CFDhat(S^3\setminus \nu(K))$ that come from pairing generators in idempotent $\iota_0$ and intersection points in the fourth quadrant correspond to generators of $\CFAhat(S^3,P)\boxtimes \CFDhat(S^3\setminus \nu(K))$ that come from pairing generators in the $\iota_1$ idempotent. The main work in \cite{Chen} is constructing from a differential in the Lagrangian Floer chain complex, $\CFKhat(\alpha,\beta)$, a type A operation in $\CFAhat(S^1\times D^2,P)$ and a corresponding type D operation in $\CFDhat(S^3\setminus \nu(K))$ so that these pair in the box tensor product to produce the given diferential.

The data of the torus divided into quadrants, with the curves $\alpha(K)$ and $\beta(P)$ places as described, or this same picture lifted to the universal cover, will be referred to as a \textit{pairing diagram} for the knot Floer homology of the satellite $P(K)$. For an example of a pairing diagram for the knot Floer homoloy of the satellite knot $P_{3,1}(T_{2,3})$ see figure \ref{P3pairingwith0framedtrefoil}. From the picture we can see that $\CFKhat(S^3,P_{3,1}(T_{2,3}))$ has $41$ generators. In that figure, we also indicate two differentials, in light and dark grey, that contribute to $\partial^{\boxtimes}$. The dark grey disk gives a differential in $\CFKhat(S^3,P_{3,1}(T_{2,3}))$ connecting $x_0\boxtimes \xi_1$ to $y_0 \boxtimes \kappa$. This arises from pairing the type A operation $m_2(x_0,\rho_1)=y_0$ and the Type D operation $\delta(\xi_1)=\rho_1\otimes\kappa$. The light grey disk represents a differential from $x_0 \boxtimes\xi_0$ to $y_0 \boxtimes \mu_1$ given by pairing the type A operation $m_2(x_0,\rho_1)=y_0$ and the type D operation $\delta(\xi_0)=\rho_1 \boxtimes \mu_1$.

For convenience we will usually draw pictures of single lifts of $\alpha(K)$ and $\beta(P)$ to the universal cover $\pi: \R^2 \to T^2$ of the torus. Here we choose a single lift of $\beta(P)$, call it $\tbeta$, and a lift of $\alpha(K)$, call it $\talpha$, so that $\talpha$ is in pegboard position with respect to a peg at the midpoint of the arc $\delta_{w,z}$ of large enough radius to contain both basepoints $w$ and $z$. We also require that $\talpha$ and $\tbeta$ intersect transversely and there are no pairs of intersections that are connected by a Whitney disk that does not cross any basepoint. This is allowed, since intersection Floer homology is an isotopy invariant (a topic we come back to in the next section). These conditions ensure that $\CFKhat(\talpha,\tbeta)\simeq\HFKhat(S^3,P(K))$, where $\CFKhat(\talpha,\tbeta)$ denotes the intersection Floer homology of the two curves in $\R^2\setminus \pi^{-1}(\{w,z\})$. See figure \ref{P_3trefoilpairingdiagram} for an example computing $\HFKhat(S^3,P_{3,1}(T_{2,3}))$ from a lifted pairing diagram. This figure shows that $\rk(\HFKhat(S^3,P_{3,1}(T_{2,3})))=21$.


The last bit of information we want to extract from the pairing diagram is the Alexander grading on $\HFKhat(S^3,P(K))$. This is achieved by the following lemma.

\begin{lemma}[\cite{Chen} Lemma 4.1]\label{Alex}

Let $x$ and $y$ be two intersection points between $\alpha$ and $\beta$. Let $\ell$ be an arc on $\beta$ from $x$ to $y$, and let $\delta_{w,z}$ be a straight arc connecting $w$ to $z$. Then $$A(y)-A(x)=\ell\cdot\delta_{w,z}$$

\end{lemma}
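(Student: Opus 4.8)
The plan is to reduce the formula to the usual combinatorial description of the Alexander grading in a doubly-pointed Heegaard diagram and then to evaluate the resulting multiplicity difference as an intersection number on $\beta$. Under Chen's pairing theorem the complex $\CFKhat(\alpha,\beta)$ is identified with $\CFAhat(S^1\times D^2,P)\boxtimes\CFDhat(S^3\setminus\nu(K))$ as a filtered complex computing $\HFKhat(S^3,P(K))$, and its Alexander grading is the one induced from the doubly-pointed bordered Heegaard diagram for $P$ (the $z$ basepoint recording the meridional winding of $\beta$). In particular it satisfies the standard relation $A(y)-A(x)=n_z(\phi)-n_w(\phi)$ for any connecting domain $\phi$, i.e.\ any integral $2$-chain (in the punctured torus, or in its universal cover $\R^2\setminus\pi^{-1}(\{w,z\})$) whose boundary is supported on $\alpha\cup\beta$ with $\partial(\partial_\alpha\phi)=\partial(\partial_\beta\phi)=y-x$. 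To keep both the connecting domain and the quantity $\ell\cdot\delta_{w,z}$ unambiguous I would work in the lifted pairing diagram, with fixed lifts $\talpha,\tbeta$ as in Figure \ref{P_3trefoilpairingdiagram}; there an arc on $\tbeta$ from $x$ to $y$ is determined up to homotopy rel endpoints, so the right-hand side is well defined.

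First I would produce a convenient connecting domain: given the arc $\ell\subset\tbeta$ from $x$ to $y$, pick any arc $a\subset\talpha$ from $y$ back to $x$, so that $\ell\cup a$ is a closed loop in the plane and hence bounds a $2$-chain $\phi$ with $\partial_\beta\phi=\ell$ and $\partial_\alpha\phi=a$. The main computational step is the identity $n_z(\phi)-n_w(\phi)=\partial\phi\cdot\delta_{w,z}$ for the straight arc $\delta_{w,z}$ from $w$ to $z$: as one traverses $\delta_{w,z}$, the local multiplicity of $\phi$ changes by the signed local intersection number with $\partial\phi$ at each crossing, so the net change between its endpoints equals the signed count $\partial\phi\cdot\delta_{w,z}$. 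Writing $\partial\phi=\ell+a$ then gives $A(y)-A(x)=\ell\cdot\delta_{w,z}+a\cdot\delta_{w,z}$.

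The step I expect to need the most care is showing that the $\alpha$-contribution vanishes, $a\cdot\delta_{w,z}=0$. This is where the geometry of the genus-$1$ doubly-pointed bordered Heegaard diagram enters: by definition the arc completing the knot from $w$ to $z$ lies in the complement of the $\alpha$-arcs, so $w$ and $z$ can be joined by an arc disjoint from $\alpha^a$, and this property survives passing to the pairing diagram and to the cover, where $\alpha(K)$ sits far from the basepoints. Choosing $\delta_{w,z}$ to be such an arc makes it disjoint from $a\subset\talpha$, so $a\cdot\delta_{w,z}=0$ and hence $A(y)-A(x)=\ell\cdot\delta_{w,z}$, as claimed. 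I would close by recording the consistency check that this is independent of the choice of $\ell$: in the cover two arcs on $\tbeta$ from $x$ to $y$ agree up to homotopy rel endpoints, and more invariantly, any two connecting domains differ by a periodic domain, which for this torus diagram has equal multiplicities at $w$ and $z$.
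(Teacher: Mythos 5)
The paper itself only cites this lemma from Chen without reproducing a proof, so there is no in-paper argument to compare against; your proof has to stand on its own. The overall strategy is sound and is the standard one: interpret $A(y)-A(x)$ as $n_z(\phi)-n_w(\phi)$ for a connecting domain $\phi$, rewrite this as $\partial\phi\cdot\delta_{w,z}$ via the ``multiplicity jump'' argument, decompose $\partial\phi=\ell+a$ with $\ell\subset\tbeta$ and $a\subset\talpha$, and show the $\alpha$-contribution vanishes.

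The one step whose justification is off is the vanishing of $a\cdot\delta_{w,z}$. You appeal to the fact that in the genus-$1$ bordered Heegaard diagram for $P$ the arc from $w$ to $z$ avoids the $\alpha$-arcs $\alpha^a$ and claim ``this property survives passing to the pairing diagram.'' But in the pairing diagram the curve labeled $\alpha$ is not $\alpha^a$; it is $\alpha(K)$, the immersed curve for the companion knot complement, which has no a priori relationship to $\alpha^a=(\lambda,\mu)$. The property you invoke therefore does not transfer. The correct reason, which the paper states when it sets up the lifted pairing diagram in Section~3, is the pegboard condition: $\talpha$ is put in pegboard position around pegs centered at the midpoints of the $\delta_{w,z}$ arcs with radius large enough to contain both $w$ and $z$, so $\talpha$ stays outside the pegs and hence never crosses any $\delta_{w,z}$. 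You do allude to this (``$\alpha(K)$ sits far from the basepoints''), so the conclusion is reached correctly, but the chain of reasoning through $\alpha^a$ should be removed and replaced by the pegboard argument. The closing consistency checks (uniqueness of $\ell$ in the cover; periodic domains) are correct, though in the cover the point is simpler: $\tbeta$ is a single embedded arc, so $\ell$ is literally unique.
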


For example, consider the intersection points labelled $x$ and $y$ in figure \ref{P_3trefoilpairingdiagram}. These intersection points are connected by an arb of the $\beta$ curve that is shown in bold in the figure. When we traverse this arc, from $x$ to $y$ along the orientation of $\beta$, we cross five $\delta_{w,z}$ arcs positively. Then Lemma \ref{Alex} implies $A(y)-A(x)=5$. 



\subsection{Computing $\tau(P(K))$ from a pairing diagram}\label{tauofpairing}

In this section, we recall from \cite{Chen} the precedure for computing $\tau$ from the pairing diagram for $\CFKhat(P(K))$ when $P$ is a $(1,1)$ pattern. Recall that the Alexander filtration on $\CFKhat(K)$ produces a spectral sequence converging to $\HFhat(S^3)$. The $\tau$ invariant is the minimal Alexander grading of the cycle that survives to the $E^{\infty}$ page. In what follows we give a way of computing this spectral sequence in the pairing diagram for $\CFKhat(S^3,P(K))$ for $(1,1)$ patterns $P$. First, we recall the following well known lemma that gives us a way of thinking about passing from one page of the spectral sequence to the next as cancelling differentials that decrease filtration by the minimal amount, see \cite{BLH,Zhang}.

\begin{lemma}[\cite{BLH}*{Lemma 2.4}]\label{cancel}
Suppose $(C,d)$ is a chain complex over $\F_2$ freely generated by elements $\{x_i\}$. Let $d(x_i,x_j)$ be the coefficient of $x_j$ in $d(x_i)$ and suppose $d(x_k,x_l)=1$. Then the complex $(C',d')$ wih generators $\{x_i| i \neq k,l\}$ and differential 

$$d'(x_i)=d(x_i)+d(x_i,x_l)d(x_k)$$

is chain homotopy equivalent to $(C,d)$
\end{lemma}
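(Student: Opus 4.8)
This is the classical Gaussian elimination (or ``cancellation'') lemma, and the plan is to realize $(C',d')$ as the quotient of $(C,d)$ by an acyclic subcomplex obtained after one change of basis.

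First I would normalize the arrow being cancelled. Since the $x_l$-coefficient $d(x_k,x_l)$ equals $1$, the element $x_l' := d(x_k)$ together with the $x_i$ for $i\neq l$ is again a free $\F_2$-basis of $C$. In this new basis $d(x_k)=x_l'$ and $d(x_l')=d^2(x_k)=0$, so $A := \langle x_k, x_l'\rangle$ is a genuine subcomplex of $C$ — which it was \emph{not} before the change of basis, since $d(x_k)$ in the original basis generally has components outside $\langle x_k,x_l\rangle$ — and $A$ is acyclic, its differential being the isomorphism $x_k\mapsto x_l'$ onto $\langle x_l'\rangle$ in the appropriate degree.

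Next I would use the short exact sequence $0\to A\to C\to C/A\to 0$ of complexes of $\F_2$-vector spaces. Since $A$ is acyclic, the long exact sequence in homology shows $C\to C/A$ is a quasi-isomorphism; and since any chain complex of $\F_2$-vector spaces is chain homotopy equivalent to its homology (split off the cycles, then split off the boundaries — no finiteness hypothesis on $C$ is needed), a quasi-isomorphism between such complexes is a chain homotopy equivalence, so $C\simeq C/A$. Finally I would identify $C/A$ with $C'$: a basis of $C/A$ is $\{\bar x_i : i\neq k,l\}$, so its underlying vector space is that of $C'$, and rewriting $d(x_i)$ in the normalized basis and reducing modulo $A$ expresses the induced differential as the image of $d(x_i)+d(x_i,x_l)\,d(x_k)$ after discarding its $x_k$- and $x_l$-coefficients. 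One checks that the $x_l$-coefficient of $d(x_i)+d(x_i,x_l)\,d(x_k)$ is $d(x_i,x_l)+d(x_i,x_l)d(x_k,x_l)=0$ over $\F_2$ automatically, so this induced differential is precisely the $d'$ of the statement (read with the standing convention that $d'$ records no $x_k$ or $x_l$ component). This yields $(C',d')\cong C/A\simeq (C,d)$.

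There is no serious obstacle here; the result is routine homological algebra. The points that need care are: that the single change of basis is exactly what turns $\langle x_k,x_l\rangle$ into a subcomplex; that ``killing an acyclic subcomplex yields a chain homotopy equivalence'' is legitimate over the field $\F_2$ even if $C$ is infinitely generated; and — the fiddliest item — the bookkeeping that recovers the precise formula for $d'$, where the observation that its $x_l$-coefficient vanishes automatically is the key.
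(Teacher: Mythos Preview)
Your argument is correct. The paper itself does not prove this lemma: it is quoted verbatim from \cite{BLH} as a well-known cancellation result and used as a black box, so there is no in-paper proof to compare against. Your approach---change basis so that $x_l'=d(x_k)$ makes $\langle x_k,x_l'\rangle$ an acyclic subcomplex, then pass to the quotient and identify the induced differential with $d'$---is the standard Gaussian-elimination proof, and the bookkeeping you describe (in particular the automatic vanishing of the $x_l$-coefficient of $d(x_i)+d(x_i,x_l)d(x_k)$ over $\F_2$) is handled correctly.
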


Now, suppose $C$ is a filtered chain complex. The above lemma tells us how to compute the spectral sequence associated to the filtration in stages. The $E_0$ term of the spectral sequence is the associated graded $C=\bigoplus C_i$. Then, we pass from the $E_0$ to the $E_1$ page by cancelling the components of $d$ that do not shift the grading, and arrive at a chain complex $(E_1,d_1)$, where the $d_1$ differential is defined as in lemma \ref{cancel}. Continuing in this way, we pass from the $E_1$ page to the $E_2$ page by cancelling the components of the differential $d_1$ that shift grading by one, etc. In this way, the spectral sequence collapses when we have reached a chain complex filtered chain homotopy equivalent to the original one but whose differential is zero. For more details, see the discussion after Remark 2.5 in \cite{BLH}.

In the spectral sequence induced by the Alexander filtration on $\CFKhat(\talpha,\tbeta)$, the previous discussion shows that passing from one page to the next in this spectral sequence amounts to cancelling differentials that connect elements of minimal Alexander filtration difference. We now give an way to see that cancellation geometrically in the complex $\CFKhat(\talpha,\tbeta)$. In the pairing diagram, differentials are given by Whitney disks that connect two intersection points and cross the $z$ basepoint, but not the $w$ basepoint and the filtration difference is the number of $z$ basepoints enclosed. To cancel two generators connected by such a Whitney disk, we perform an isotopy of the $\beta$ curve over the disk to a new curve $\beta'$ thus cancelling those two intersection points in the diagram, together with possible more if the Whitney disk contains any arcs of the $\alpha$ curve in its interior. In any case, all the intersection points cancelled by isotoping away this Whitney disk will all have the same filtration difference, so it doesn't matter if we cancel pairs of generators of minimal filtration difference one at a time or in bulk. Once this isotopy is performed, we arrive at a new complex, with fewer generators. To remember the filtration difference after the cancellation, following Chen we place small arrows on the $\beta'$ curve, called $A$-bouys, that remember that an isotopy of a Whitney disk crossing some number of $z$ basepoints was performed. Then, when we compute filtration differences of the remaining intersection points in the $\alpha$ and $\beta'$ complex, we count both intersections of the $\beta'$ curve with the $\delta_{w,z}$ arcs and the $A$-bouys.

It remains to observe that when we cancel two intersection points by isotoping the curve $\beta$ to $\beta'$, the differential $d'$ on the Lagrangian Floer chain complex $\CFKhat(\talpha,\tbeta')$, which is given by counting holomorphic disks with boundary conditions on $\talpha$ and $\tbeta'$, is given by the formula $d'(a)=d(a)+d(a,y)d(x)$, where $d(a,y)$ is, as above, the coefficient of $y$ in $d(a)$. To see this, recall that for a generator $x$ of the Lagrangian Floer chain complex, the differential is given by

$$d(x)=\sum_y n(x,y)y$$ where $n(x,y)$ counts Maslov index $1$ holomorophic disks connecting $x$ to $y$ in the $\alpha, \beta$ complex. Now, suppose that we isotope the curve $\beta$ to a new curve $\beta'$ where $\beta'$ results from isotoping $\beta$ over a Whitney disk that crosses the $z$ basepoint and cancels the intersection points $x$ and $y$ of minimal filtration difference. Then by \cite{combfloer}*{equation 59}, the new holomorphic disk count in the $\alpha$ and $\beta'$ complex is given by

$$n'(a,b)=n(a,b)+n(a,y)n(x,b)$$

Where $a,b \in \alpha \cap \beta'$. This implies that $d'(a)=d(a)+d(a,y)d(x)$ for $a \in \CFKhat(\talpha,\tbeta')$. Indeed, we have
$$d'(a)=\sum_b n'(a,b)b=\sum_b n(a,b)b+n(a,y)\sum_b n(x,b)b=d(a)+d(a,y)d(x).$$

\begin{figure}[!tbp]
  \centering
  \begin{minipage}[b]{0.3\textwidth}
  \begin{tikzpicture}
\node[anchor=south west, inner sep=0] at (0,0) { \includegraphics[width=1.2\textwidth]{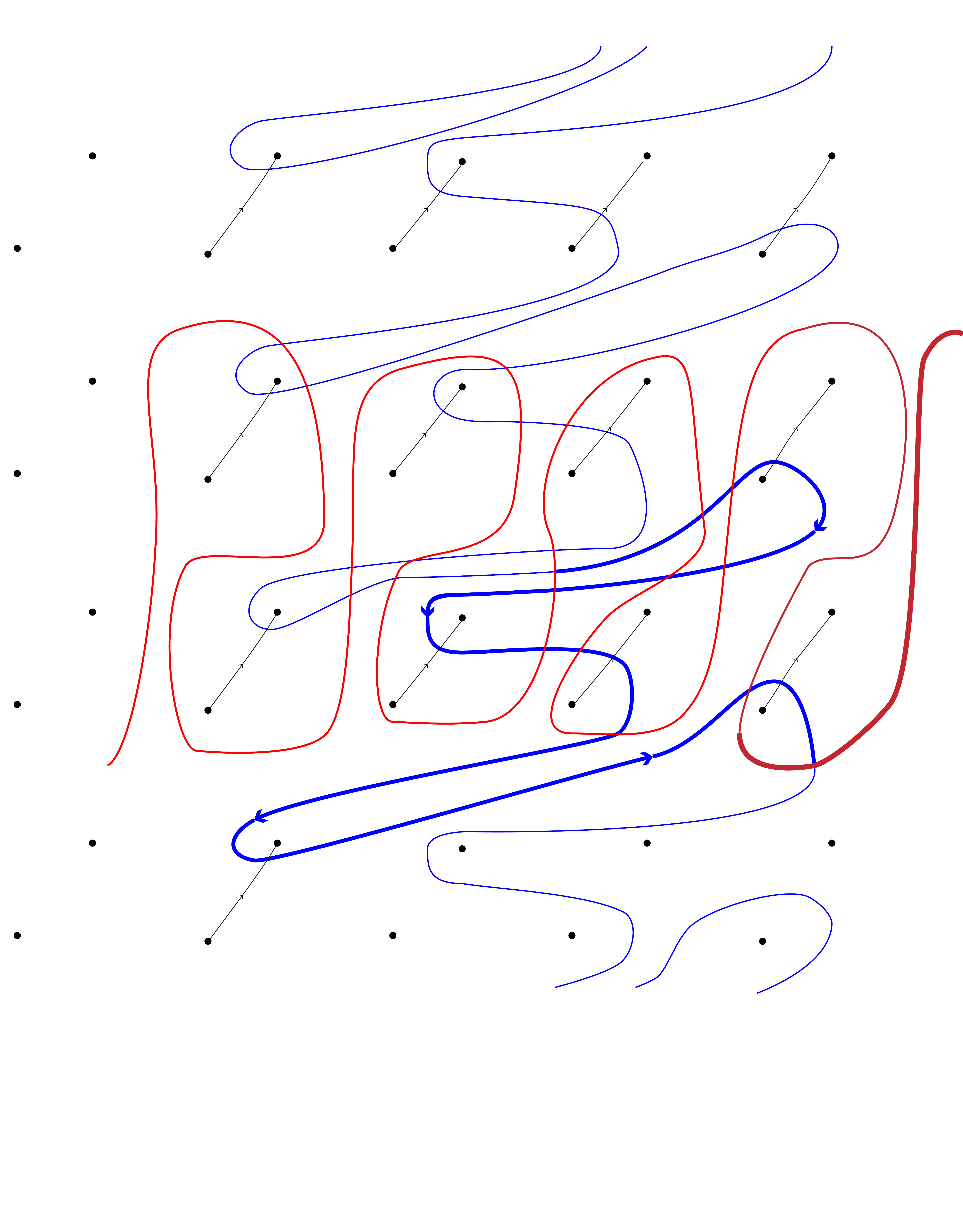}};
\node[font=\tiny] at (2.82,3.5) {$x$};
\node at (2.9,3.45) {$\tikzcirclee{1pt}$};
\node at (4.29,2.45) {$\tikzcirclee{1pt}$};
\node[font=\tiny] at (4.35,2.35) {$y$};

\node[font=\tiny] at (1,1.4) {$w$};
\node[font=\tiny] at (1,2.75) {$w$};
\node[font=\tiny] at (1,3.9) {$w$};
\node[font=\tiny] at (1,5.2) {$w$};

\node[font=\tiny] at (1.5,5.3) {$\delta_{w,z}$};
\node[font=\tiny] at (2.5,5.3) {$\delta_{w,z}$};
\node[font=\tiny] at (3.5,5.3) {$\delta_{w,z}$};
\node[font=\tiny] at (4.5,5.3) {$\delta_{w,z}$};

\node[font=\tiny] at (1.5,4) {$\delta_{w,z}$};
\node[font=\tiny] at (1.5,2.8) {$\delta_{w,z}$};
\node[font=\tiny] at (1.5,1.5) {$\delta_{w,z}$};

\node[font=\tiny] at (1.4,2.1) {$z$};
\node[font=\tiny] at (1.35,3.25) {$z$};
\node[font=\tiny] at (1.35,4.5) {$z$};
\node[font=\tiny] at (1.35,5.6) {$z$};

\node[font=\tiny] at (2.4,2) {$z$};
\node[font=\tiny] at (2.35,3.25) {$z$};
\node[font=\tiny] at (2.35,4.45) {$z$};
\node[font=\tiny] at (2.35,5.6) {$z$};

\node[font=\tiny] at (3.5,2) {$z$};
\node[font=\tiny] at (3.45,3.25) {$z$};
\node[font=\tiny] at (3.45,4.45) {$z$};
\node[font=\tiny] at (3.45,5.6) {$z$};

\node[font=\tiny] at (4.5,2) {$z$};
\node[font=\tiny] at (4.45,3.25) {$z$};
\node[font=\tiny] at (4.45,4.45) {$z$};
\node[font=\tiny] at (4.45,5.6) {$z$};

\node[font=\tiny] at (2,1.4) {$w$};
\node[font=\tiny] at (2.2,2.75) {$w$};
\node[font=\tiny] at (2,3.9) {$w$};
\node[font=\tiny] at (2,5.2) {$w$};

\node[font=\tiny] at (3,1.4) {$w$};
\node[font=\tiny] at (3.2,2.75) {$w$};
\node[font=\tiny] at (3,3.9) {$w$};
\node[font=\tiny] at (3,5) {$w$};

\node[font=\tiny] at (4,1.4) {$w$};
\node[font=\tiny] at (4.05,2.5) {$w$};
\node[font=\tiny] at (4,3.8) {$w$};
\node[font=\tiny] at (4,5) {$w$};

\end{tikzpicture}
    \caption{Pairing diagram for $\HFKhat(S^3,P_{3,1}(T_{2,3}))$. Intersection points labelled $x$ and $y$ satisfy $A(y)-A(x)=5$ and $A(x)=0$.}\label{P_3trefoilpairingdiagram}
  \end{minipage}
  \hspace{2in}
  \begin{minipage}[b]{0.3\textwidth}
  \begin{tikzpicture}
\node[anchor=south west,inner sep=0] at (0,0)    {\includegraphics[width=1.2\textwidth]{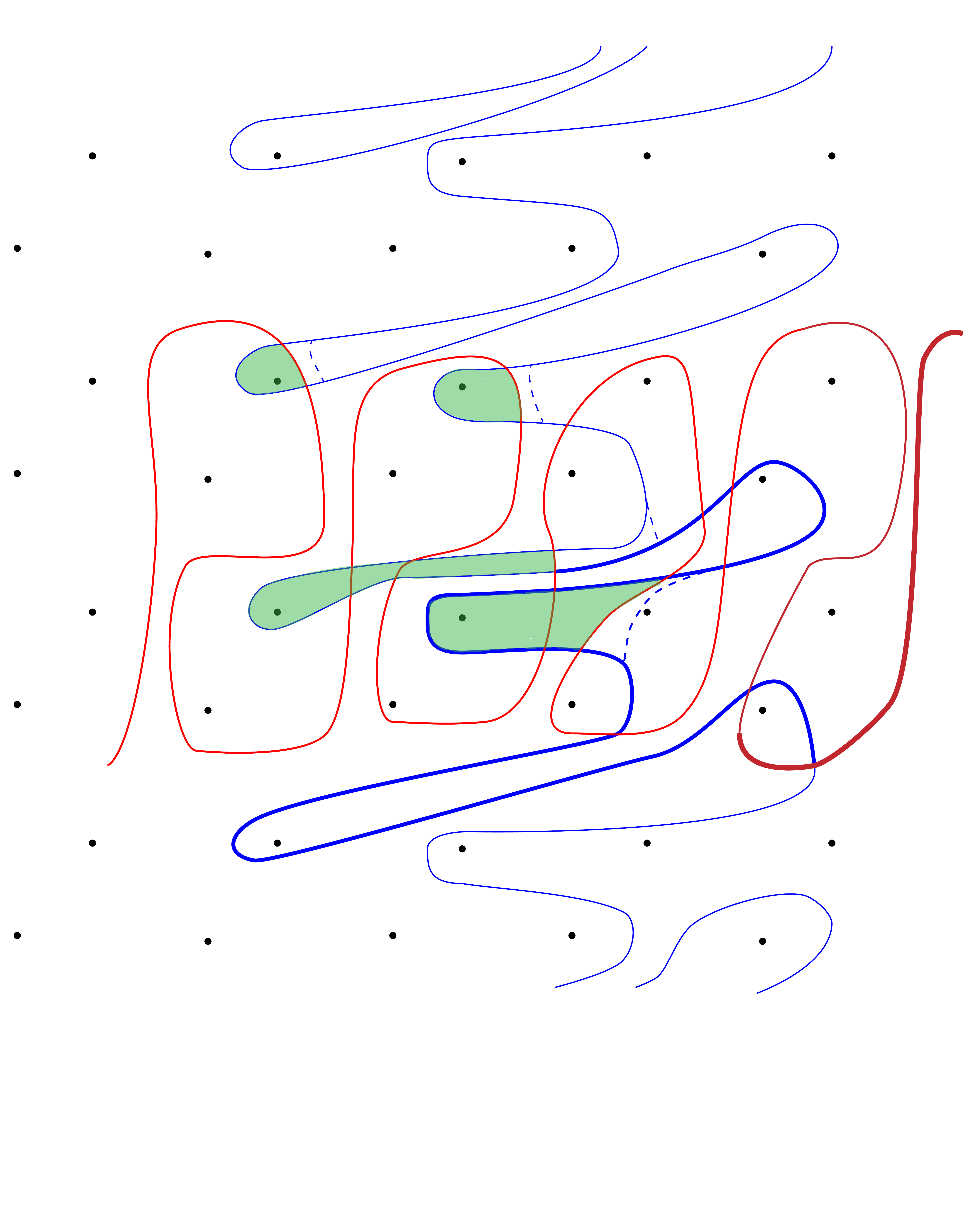}};
\end{tikzpicture}
    \caption{The disks shown represent all the differentials that lower filtration degree by one. Cancelling the disks by an isotopy, we end up with figure \ref{cancellengthtwo}}\label{calcellengthone}
  \end{minipage}
  \end{figure}
  
\begin{figure}[!tbp]
  \centering
  \begin{minipage}[b]{0.3\textwidth}
  \begin{tikzpicture}
\node[anchor=south west, inner sep=0] at (0,0)  {\includegraphics[width=1.2\textwidth]{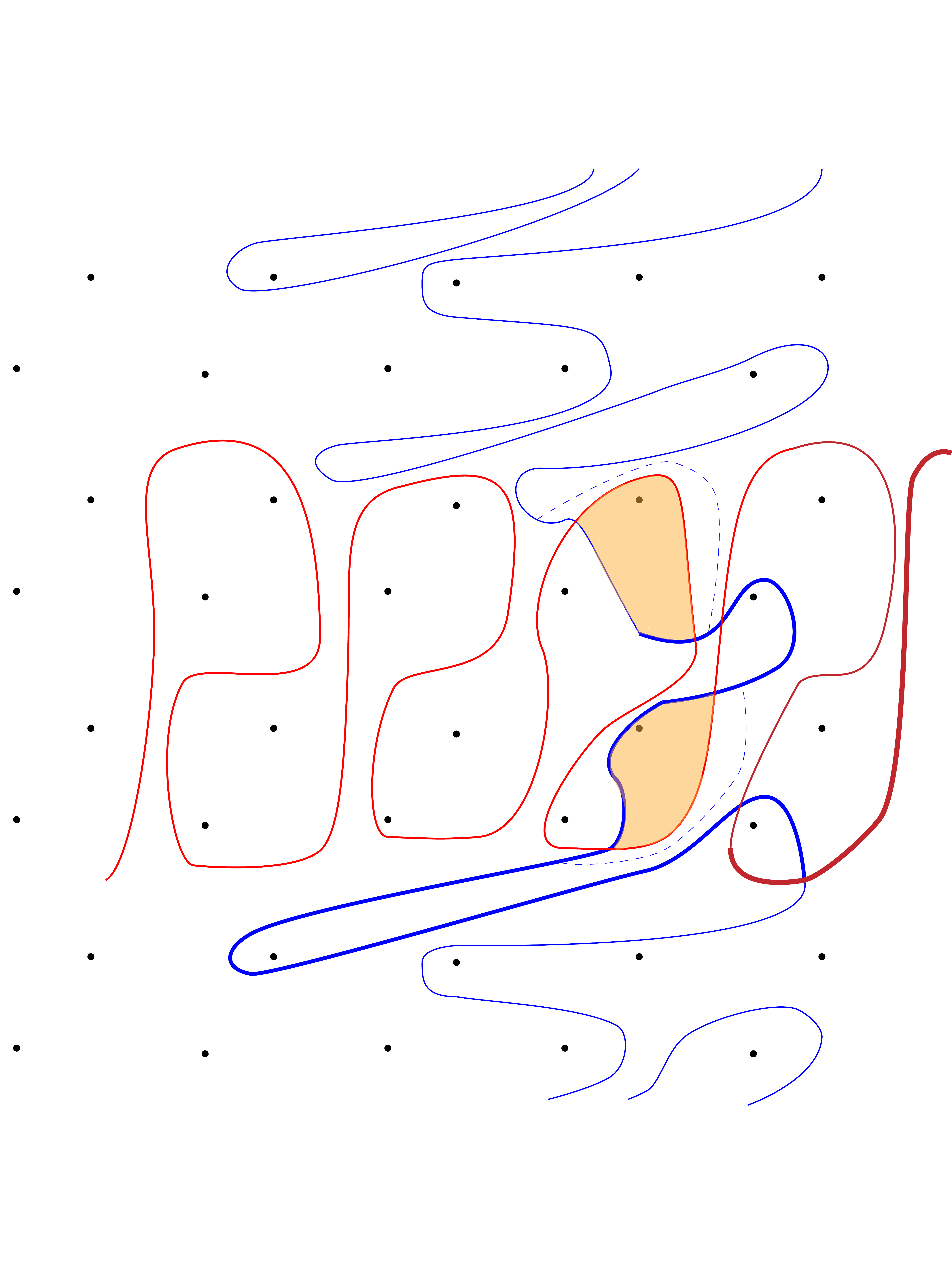}};
\end{tikzpicture}
    \caption{The result of cancelling the disks in figure \ref{calcellengthone}. There are two disks that connect generators of $\CFKhat(\talpha,\tbeta')$ of minimal filtration difference. Cancelling these disks we arrive at figure \ref{deepestcancel}}\label{cancellengthtwo}
  \end{minipage}
  \hspace{2in}
  \begin{minipage}[b]{0.3\textwidth}
  \begin{tikzpicture}
\node[anchor=south west,inner sep=0] at (0,0)    {\includegraphics[width=1.2\textwidth]{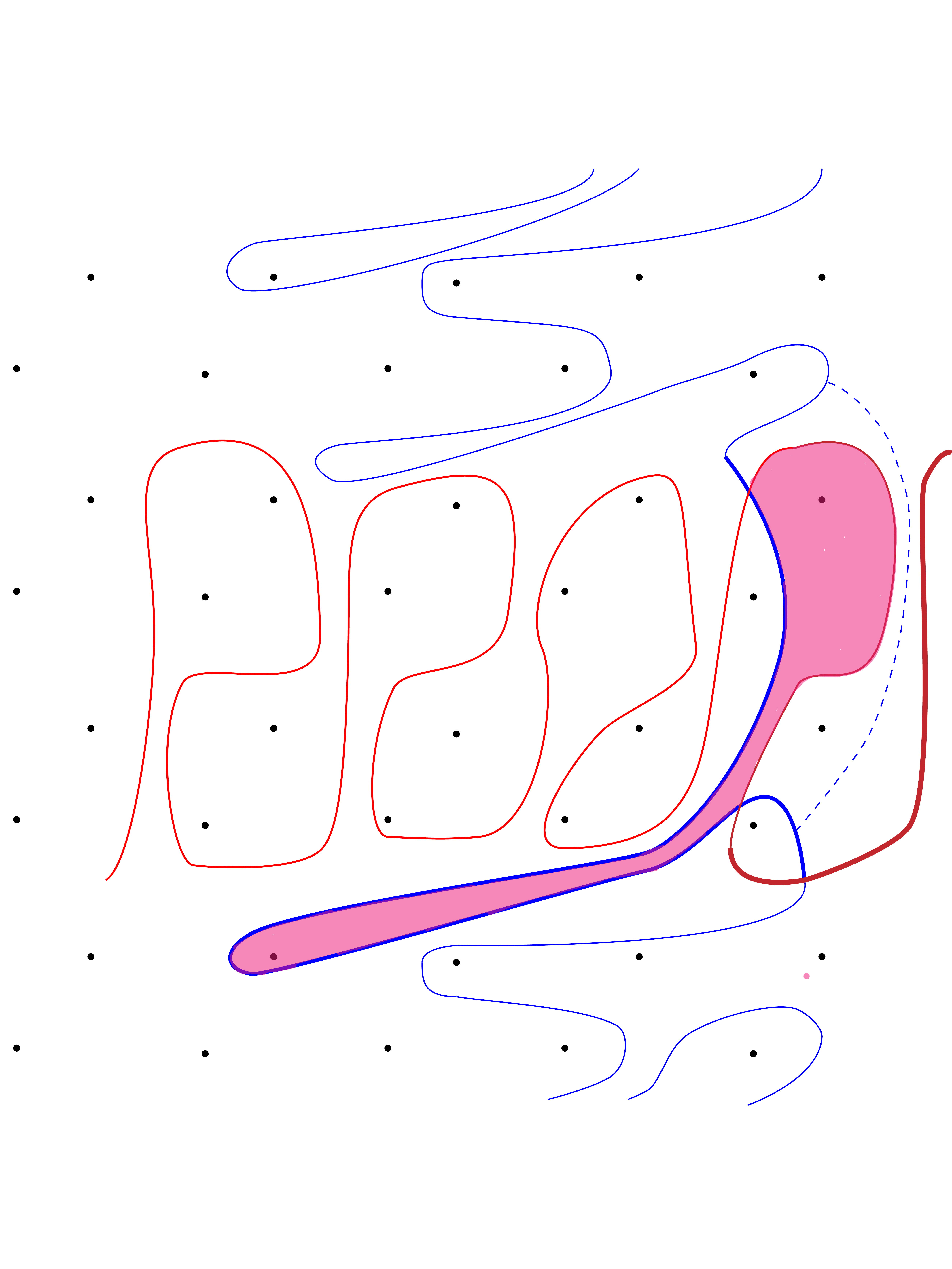}};
\end{tikzpicture}
    \caption{The result of isotoping $\beta'$ in figure \ref{cancellengthtwo}, we arrive at a complex with three generators and one differential connecting two generators of minimal filtration difference}\label{deepestcancel}
  \end{minipage}
\end{figure}

This gives a diagramatic way to run the Alexander filtration spectral sequence in a pairing diagram. For example, consider figures \ref{P_3trefoilpairingdiagram}-\ref{deepestcancel}. In that sequence of figures we first see the pairing diagram for $\HFKhat(S^3,P_{3,1}(T_{2,3}))$ in figure \ref{P_3trefoilpairingdiagram}. In figure \ref{calcellengthone}, we have indicated all of the Whitney disks that connect two intersection points of filtration difference one. When we cancel these disks by isotoping the $\beta$ curve over these disks, we arrive at figure \ref{cancellengthtwo}. In that figure, we have indicated the disks that connect intersection points of minimal filtration difference. Cancelling these, we arrive at figure \ref{deepestcancel}, where we see three intersection points, two of which are connected by a Whitney disk, shown in the figure in purple. If we cancel these two generators we arrive at a pairing diagram with one intersection point. The Alexander grading of this intersection point is then $\tau(P_{3,1}(T_{2,3}))$ by the discussion above.

\begin{figure}
  \begin{tikzpicture}
\node[anchor=south west,inner sep=0] at (0,0)    {\includegraphics[width=\textwidth]{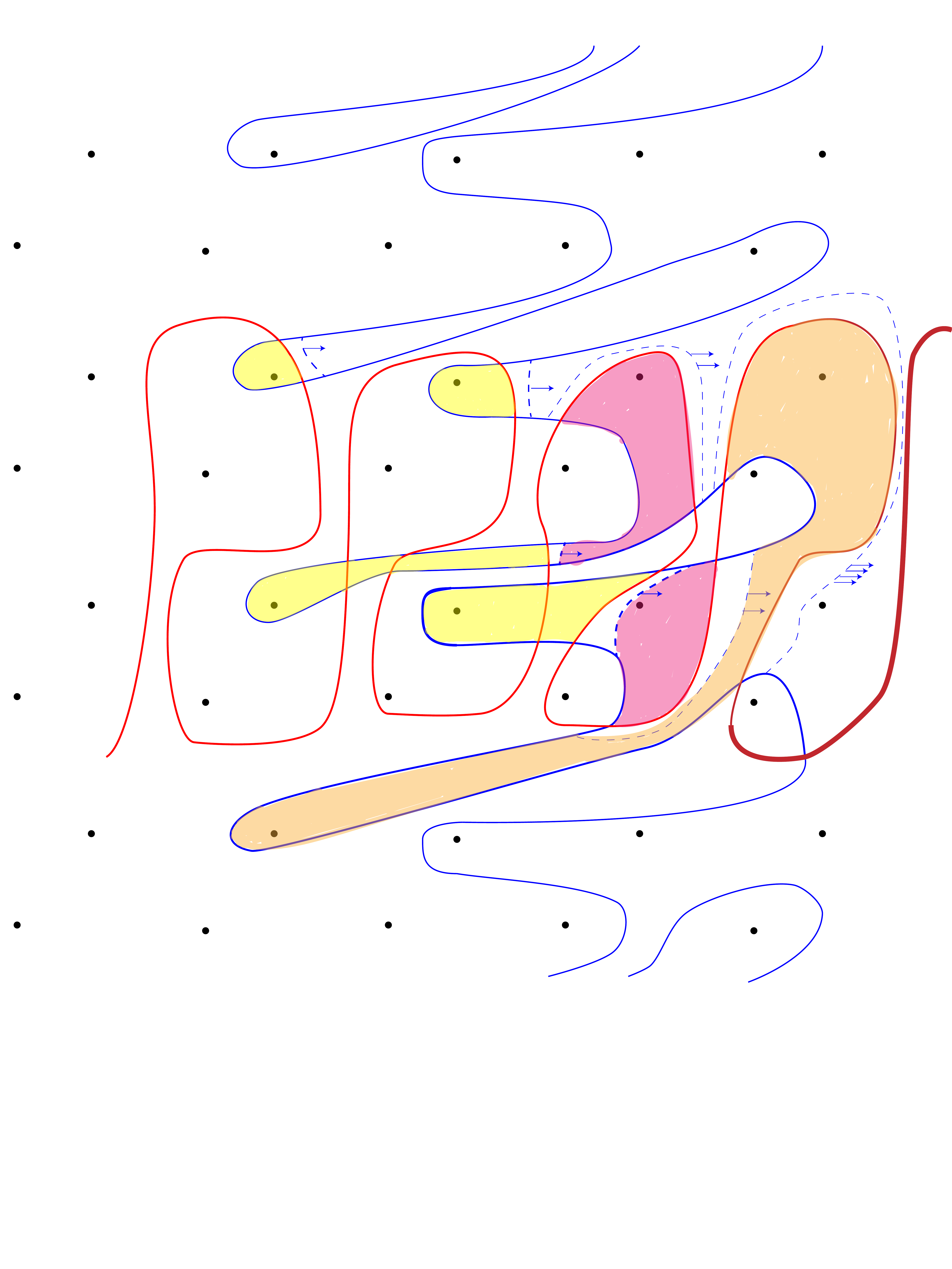}};
\node[font=\footnotesize] at (11.75,7.5) {$\tikzcirclee{3pt}$};
\node[font=\small] at (12,7.3) {$\textcolor{green}{a}$};
\node at (10.9,8.6) {$\tikzcirclee{3pt}$};
\node[font=\small] at (11.1,8.5) {$\textcolor{green}{b}$};
\node at (10.7,11.6) {$\tikzcirclee{3pt}$};
\node[font=\small] at (10.9,11.6) {$\textcolor{green}{c}$};
\node at (8.05,10.35) {$\tikzcircle{3pt}$};
\node[font=\small] at (7.9,10.2) {$x$};
\end{tikzpicture}\caption{Cancelling all intersection points with filtration difference one (disk in yellow) and intersection points with filtration difference two (disks in pink) There are three intersection points remaining.}\label{P3pairingRHTwithbouys}
\end{figure}
A convenient way to package the entire spectral sequence is shown in figure \ref{P3pairingRHTwithbouys}. Here we see all of the disks we cancelled in the spectral sequence, and the A bouys that keep track of the Alexander filtration from the original complex in all of the subsequent pages. Note that if we draw it like this, we have to cancel all intersection points with filtration difference one before cancelling any with filtration difference two, etc. We can find the absolute Alexander grading of the generator labelled $a$ (the intersection point we found to survive the $z$-basepoint spectral sequence) as follows. By the symmetry under the elliptic involution, it is easy to see that $A(x)=0$. Then using Lemma \ref{Alex} we have $A(a)-A(x)=A(a)=5$, so $\tau(P_{3,1}(T_{2,3}))=5$


\section{Trefoil patterns}\label{trefoilpatterns}

In this section we will compute $\tau$ of satellite knots with arbitrary companion knot $K$ and pattern $P$ from a family of trefoil patterns that we will now describe. 

\subsection{Introducing the patterns}
The $(1,1)$-patterns studied in this paper are constructed by isotoping the $\beta$ curve on the doubly pointed bordered Heegaard diagram for the unknotted $(p,1)$-cable pattern so that we introduce only two extra intersection points between $\beta$ and $\lambda$. To describe the isotopy, consider first the case $p=3$. The unknotted $(3,1)$-cable pattern is shown in figure \ref{(3,1)cable}. Isotope the $\beta$ curve by taking the bottom-most horizontal strand and pushing it once across the longitude of the solid torus. Once we isotope $\beta$ over the longitude, we follow the pattern around the meridian until we end up inside the bygon that contains the $z$ basepoint, without crossing the longitude again. A intermediate stage of this isotopy is shown in figure \ref{fingermove}. If we push the $\beta$ curve over the $z$ basepoint, we arrive at the pattern shown in figure \ref{trefoilpattern $P_3$}, which we will denote by $P_{3,1}$. 

In general, we take the bottom most horizontal strand of the $\beta$ curve in the genus-1 doubly-pointed bordered Heegaard diagram for the $(p,1)$-cable pattern, push it once over the longitude, and then follow the pattern around the meridian until we end up inside the bygon that contains the $z$ basepoint. If we push the $\beta$ curve over the basepoint, we arrive at a $(1,1)$ diagram for a pattern that we denote $P_{p,1}$. The lift of the pattern $P_{p,1}$ is shown in figure \ref{tauepsilonposgeneral}, where we see that it looks like the lift of the $(p,1)$ -cable pattern with one extra arm. By construction, since we only crossed the longitude $\lambda$ once in our isotopy, we increased the number of intersections with the longitude by two. Therefore $\rk(\HFKhat(S^3,P_{p,1}(U)))=3$ for all $p>1$. Alternatively, pairing this pattern with $\CFDhat(S^3\setminus U)$ (whose immersed curve is a horizontal line) results in three intersection points and no differentials. As the rank of knot Floer homology detects the trefoil knot \cite{geography}*{Corollary 8}, we know that $P_{p,1}(U)$ has the knot type of the trefoil in $S^3$. As mentioned in the introduction, we will call such a pattern $P \subset S^1 \times D^2$ a \textit{trefoil pattern}. In the next section we will use the procedure described in section \ref{tauofpairing} to prove Theorem \ref{tauoftrefoil}.

\begin{figure}[!tbp]
  \centering
  \begin{minipage}[b]{0.3\textwidth}
  \includegraphics[width=1.2\textwidth]{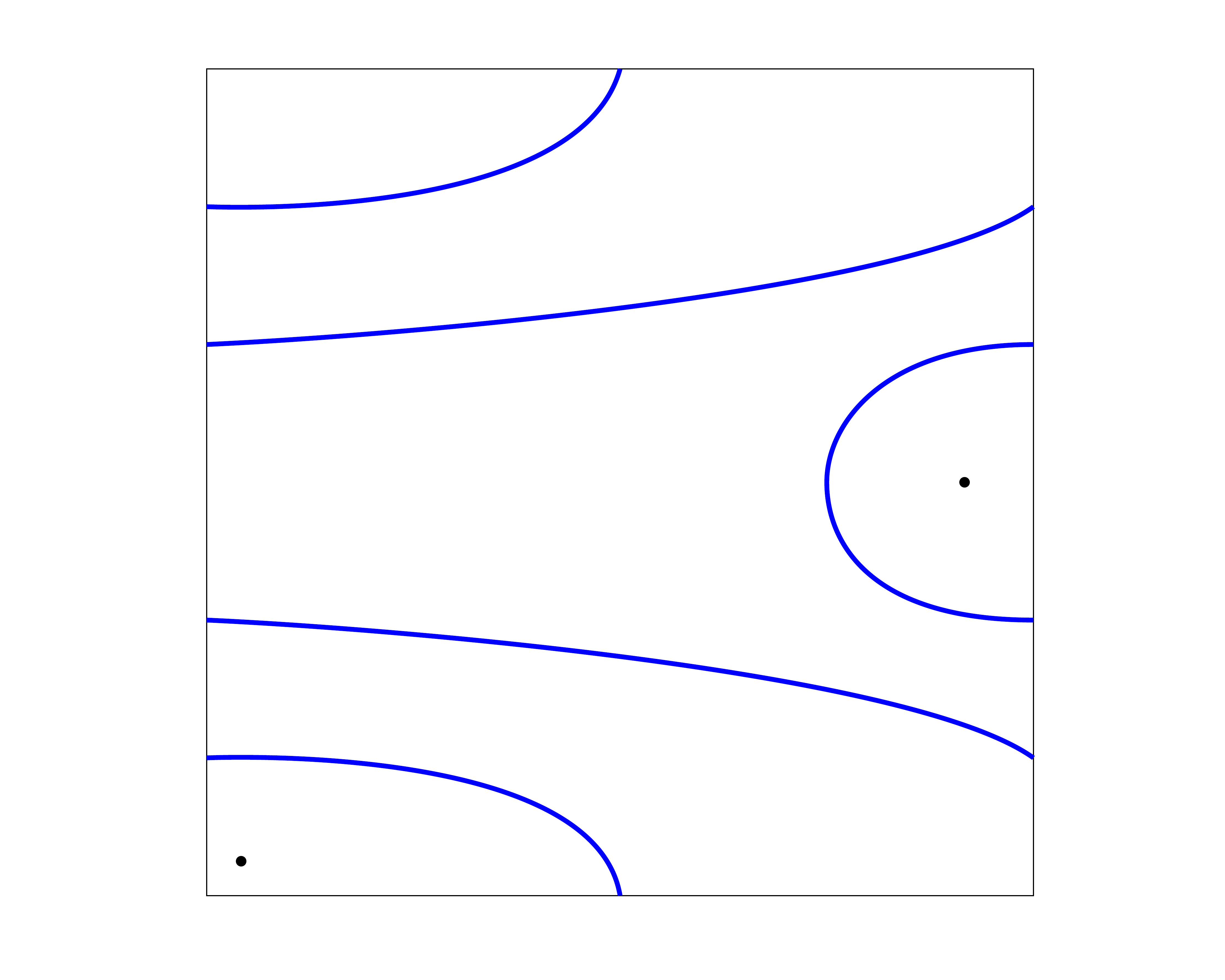}
    \caption{Doubly pointed Heegaard diagram for $(3,1)$ cable pattern}\label{(3,1)cable}
  \end{minipage}
  \hspace{2in}
  \begin{minipage}[b]{0.3\textwidth}
  \begin{tikzpicture}
\node[anchor=south west,inner sep=0] at (0,0)    {\includegraphics[width=1.2\textwidth]{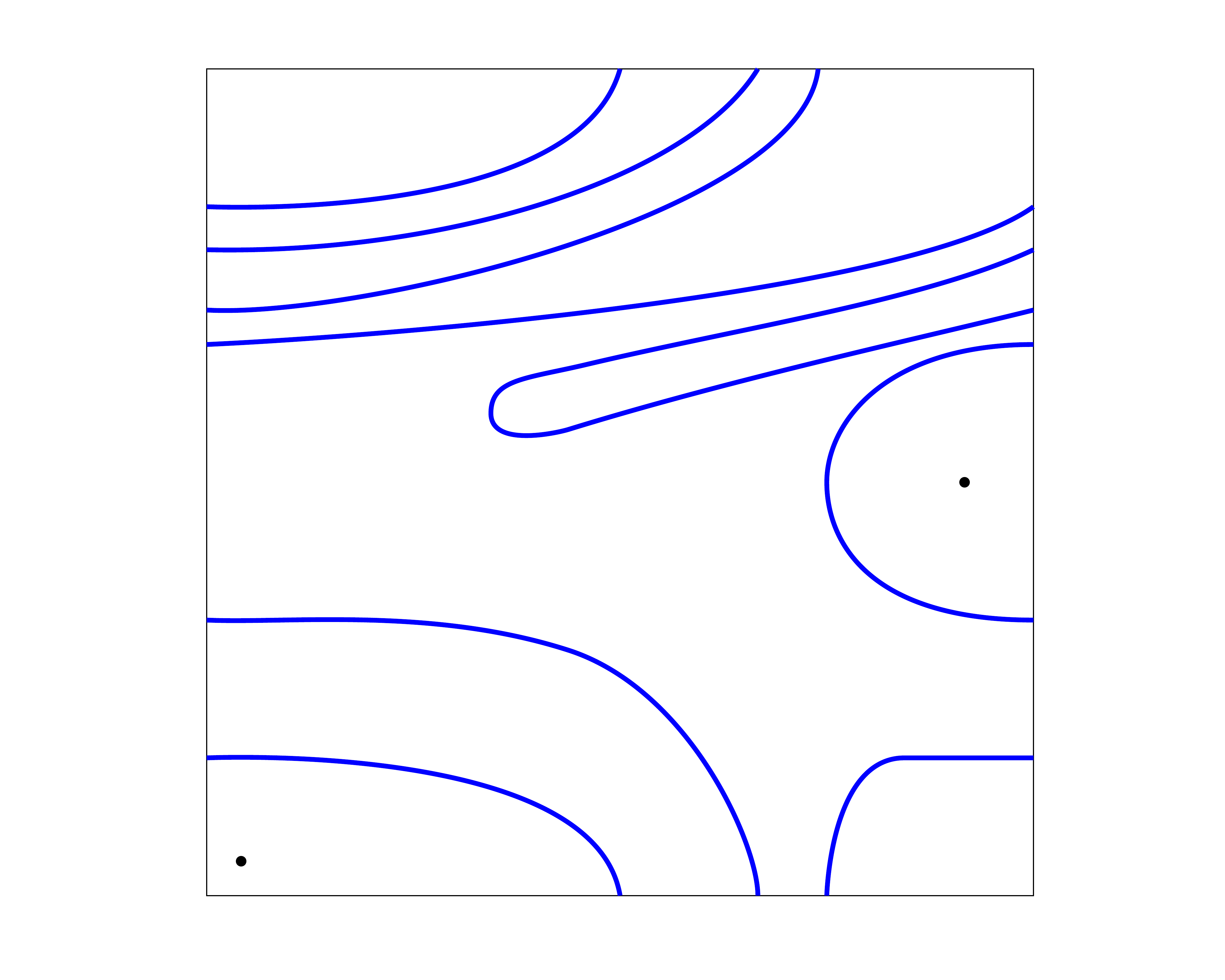}};
\end{tikzpicture}
    \caption{Midway through the isotopy}\label{fingermove}
  \end{minipage}
  \end{figure}
  
\begin{figure}[!tbp]
  \centering
  \begin{minipage}[b]{0.3\textwidth}
  \begin{tikzpicture}
\node[anchor=south west, inner sep=0] at (0,0)  {\includegraphics[width=1.2\textwidth]{trefoilfrom31cable}};
\node at (3,4) {$\lambda$};
\node at (.5,2) {$\mu$};
\node at (1.2,.5) {$w$};
\node at (4.15,2) {$z$};
\end{tikzpicture}
    \caption{Doubly pointed bordered Heegaard diagram for the trefoil pattern $P_{3,1}$}\label{trefoilpattern $P_3$}
  \end{minipage}
  \hspace{2in}
  \begin{minipage}[b]{0.3\textwidth}
  \begin{tikzpicture}
\node[anchor=south west,inner sep=0] at (0,0)    {\includegraphics[width=1.2\textwidth]{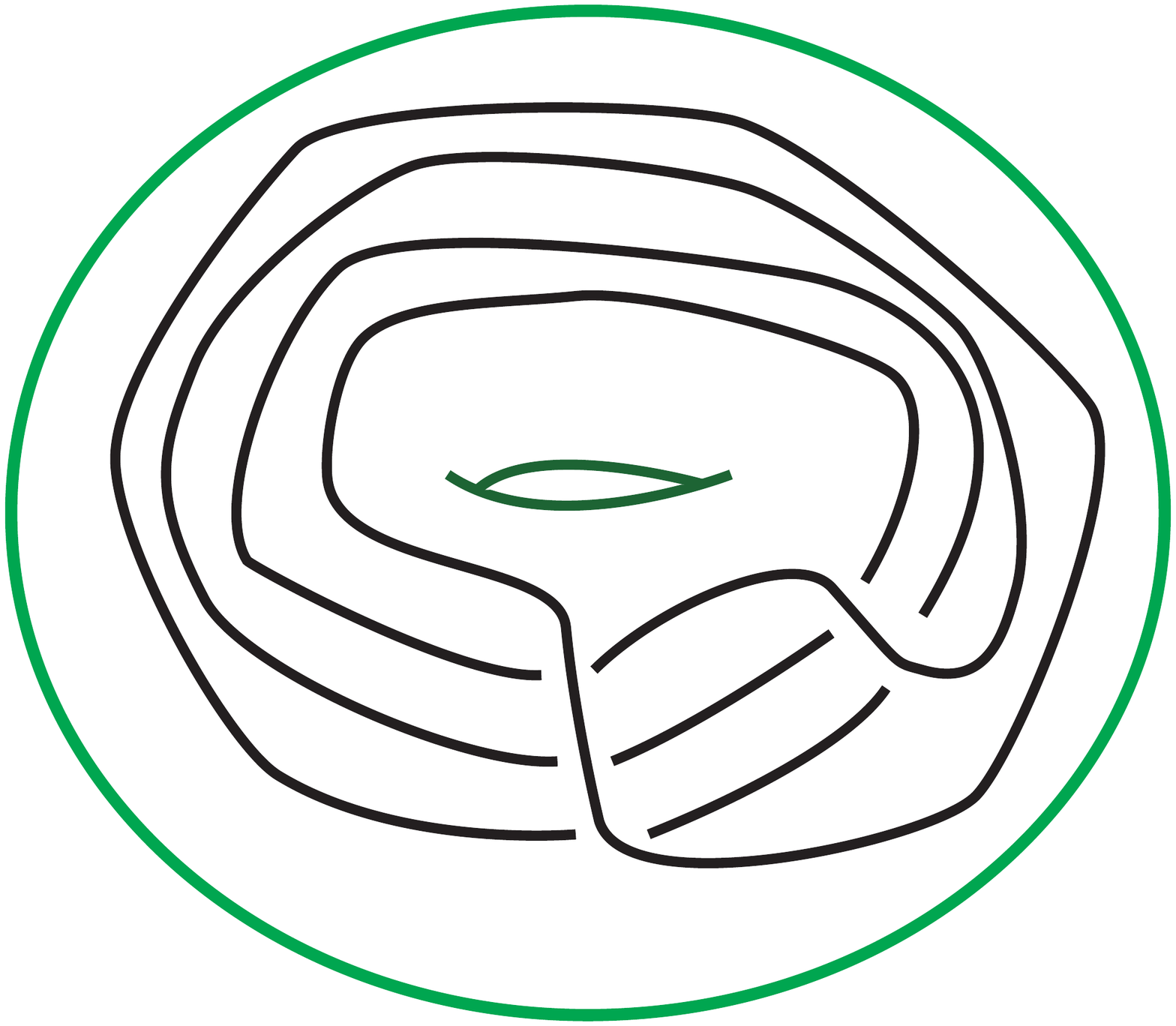}};
\end{tikzpicture}
\vspace{-.7in}
    \caption{The trefoil pattern $P_{3,1}$ in the solid torus}\label{trefoilpatternintorus}
  \end{minipage}
\end{figure}


\subsection{$\tau$ of $0$ framed satellites with arbitrary companions}
In the previous section we constructed, for each $p>1$, a trefoil pattern in the solid torus. It follows from \cite[Lemma 6.3]{Chen} that $w(P_{p,1})=p+1$. The pattern $P_{3,1}$ is shown in the solid torus in figure \ref{trefoilpatternintorus}. In this section we show how to compute $\tau(P_{p,1}(K))$ for $K$ an arbitrary knot in $S^3$. As we will see, the answer only depends on the values of $\tau(K)$ and $\epsilon(K)$.

\begin{proof}[Proof of Theorem \ref{tauoftrefoil}] 

We begin with a discussion of how to determine the absolute Alexander grading of intersection points representing generators of $\HFKhat(S^3,P_{p,1}(K))$ in the pairing diagrams in figures \ref{tauepsilonposgeneral} and \ref{taunegepsilongeneral}. For example, in figure \ref{tauepsilonposgeneral} we see a lift of the $\beta$ curve to the universal cover. The dotted portions of the $\beta$ curve represents that the $\beta$ curve crosses $p-3$ columns that are not drawn, and the $\beta$ curve is completely horizontal. If we focus in on one row, for example the row \textcolor{orange}{highlighted} in figure \ref{tauepsilonposgeneral}, we can determine the relative Alexander grading of all the intersection points by Lemma \ref{Alex}. We then determine the relative Alexander grading of all the other generators by noting that by \cite{Chen}*{Lemma 6.3}, if $x$ and $x'$ are intersection points that occur on arcs of the $\beta$ curve that differ by a meridional deck transformation (shifting the picture in the universal cover down a row), then their Alexander grading difference is $w(P)$, where $w(P)$ denotes the winding number of the pattern. For example in figure \ref{tauepsilonposgeneral} the intersection points $x$ and $x'$ lie on arcs of the $\beta$ curve that are related by a meridional deck transformation. It is easy to see that $A(x)-A(x')=p+1=w(P)$. Now, to determine the absolute Alexander grading, note that the conjugation symmetry of knot Floer homology is witnessed in the pairing diagram by the hyperelliptic involution. That is, if we rotate the entire picture by $\pi$, and exchange the $z$ and $w$ basepoints, we will get the same complex. Therefore, if any intersection is fixed under this involution then it must have Alexander grading zero. In particular, we can see that the intersection that is fixed will occur along the arc of the $\beta$ curve in figure \ref{tauepsilonposgeneral} that contains the point labelled $x$. Since all intersections along this arc will have Alexander grading zero by Lemma \ref{Alex} it is enough to compute Alexander grading relative to any intersection between $\alpha(K)$ and $\beta$ that lies on this arc. From now on, we assume that this has been done and the Alexander gradings that appear in figure \ref{tauepsilonposgeneral} are absolute and not relative. 

Now, we turn to discuss how we determine which intersection point survives the $z$ basepoint Alexander grading spectral sequence. By an isotopy of the $\beta$ curve only crossing $z$ basepoints, we can isotope $\beta$ to the light blue curve in figures \ref{tauepsilonposgeneral} and \ref{taunegepsilongeneral} in the far right of the diagram. From this observation, we see that there is a choice of cancelling disks in the pairing diagram so that when we run the spectral sequence, the last remaining intersection point lies in the far right column of the pairing diagram. This choice of cancelling disks echos the choice made in the example shown in figure \ref{P3pairingRHTwithbouys} above. Next, recall that the immersed curve $\alpha(K)$, for a general knot $K$, consists of two kinds of components. There is the essential curve component $\gamma_0$ with no non-trivial local systems which wraps around the longitude of the torus and there are (potentially) other components that are immersed with local systems which all lie in a neighborhood of the meridian. Since $\beta$ can be isotoped away from a neighborhood of the meridian by crossing only $z$ basepoints, the intersection point that survives the Alexander filtration spectral sequence is an intersection between the essential component $\gamma_0$ and $\beta(P)$. Therefore, since the essential curve component has the form described in Lemma \ref{essentialcomponent} and depends only on the values of $\tau(K)$ and $\epsilon(K)$, it remains to analyse the following cases to determine the absolute Alexander grading of the generator that survives. \\

\noindent\underline{\textbf{$\tau(K)>0, \epsilon(K)=1$}}: In this case, the part of the essential component of the immersed curve for $K$ coming from the unstable chain slopes upward for $2\tau(K)$ rows and turns down at the top and up at the bottom. See figure \ref{P3pairingRHT} for an example when $p=3$ and $K=T_{2,3}$ and figure \ref{tauepsilonposgeneral} for the general case, where in that figure, we pay attention to the piece of the essential component that is \textcolor{green}{dotted} and we only draw the portion of the essential component of $\alpha(K)$ that carries the intersecion that survives the spectral sequence. In this case we see that the surviving intersection point is the one labelled $a$ in figure \ref{tauepsilonposgeneral}. To compute what this Alexander grading is, we use Lemma \ref{Alex}. When we follow the $\beta$ curve from the generator with Alexander grading $0$, labelled $x$ in the figure, we travel down $\tau(K)$ rows and then cross one extra $\delta_{w,z}$ arc. The $\tau(K)$ rows results in a change in Alexander filtration by $w(P)\tau(K)=(p+1)\tau(K)$, and crossing one more $\delta_{w,z}$ arc gives the result:

$$\tau(P_{p,1}(K))=(p+1)\tau(K)+1.$$\\

For example, in figure \ref{P3pairingRHT}, we saw earlier that $\tau(P_{3,1}(T_{2,3}))=5$. 


\begin{figure}[!tbp]
  \centering
  \begin{minipage}[b]{0.3\textwidth}
  \begin{tikzpicture}
\node[anchor=south west,inner sep=0] at (0,0)    {\includegraphics[width=1.2\textwidth]{P3trefoilpatternpairing}};

\node[font=\tiny] at (2.82,3.5) {$x$};
\node at (2.9,3.45) {$\tikzcirclee{1pt}$};
\node at (4.29,2.45) {$\tikzcirclee{1pt}$};
\node[font=\tiny] at (4.35,2.35) {$y$};

\node[font=\tiny] at (1,1.4) {$w$};
\node[font=\tiny] at (1,2.75) {$w$};
\node[font=\tiny] at (1,3.9) {$w$};
\node[font=\tiny] at (1,5.2) {$w$};

\node[font=\tiny] at (1.5,5.3) {$\delta_{w,z}$};
\node[font=\tiny] at (2.5,5.3) {$\delta_{w,z}$};
\node[font=\tiny] at (3.5,5.3) {$\delta_{w,z}$};
\node[font=\tiny] at (4.5,5.3) {$\delta_{w,z}$};

\node[font=\tiny] at (1.5,4) {$\delta_{w,z}$};
\node[font=\tiny] at (1.5,2.8) {$\delta_{w,z}$};
\node[font=\tiny] at (1.5,1.5) {$\delta_{w,z}$};

\node[font=\tiny] at (1.4,2.1) {$z$};
\node[font=\tiny] at (1.35,3.25) {$z$};
\node[font=\tiny] at (1.35,4.5) {$z$};
\node[font=\tiny] at (1.35,5.6) {$z$};

\node[font=\tiny] at (2.4,2) {$z$};
\node[font=\tiny] at (2.35,3.25) {$z$};
\node[font=\tiny] at (2.35,4.45) {$z$};
\node[font=\tiny] at (2.35,5.6) {$z$};

\node[font=\tiny] at (3.5,2) {$z$};
\node[font=\tiny] at (3.45,3.25) {$z$};
\node[font=\tiny] at (3.45,4.45) {$z$};
\node[font=\tiny] at (3.45,5.6) {$z$};

\node[font=\tiny] at (4.5,2) {$z$};
\node[font=\tiny] at (4.45,3.25) {$z$};
\node[font=\tiny] at (4.45,4.45) {$z$};
\node[font=\tiny] at (4.45,5.6) {$z$};

\node[font=\tiny] at (2,1.4) {$w$};
\node[font=\tiny] at (2.2,2.75) {$w$};
\node[font=\tiny] at (2,3.9) {$w$};
\node[font=\tiny] at (2,5.2) {$w$};

\node[font=\tiny] at (3,1.4) {$w$};
\node[font=\tiny] at (3.2,2.75) {$w$};
\node[font=\tiny] at (3,3.9) {$w$};
\node[font=\tiny] at (3,5) {$w$};

\node[font=\tiny] at (4,1.4) {$w$};
\node[font=\tiny] at (4.05,2.5) {$w$};
\node[font=\tiny] at (4,3.8) {$w$};
\node[font=\tiny] at (4,5) {$w$};
\end{tikzpicture}
    \caption{The lift of the trefoil pattern $P_3$ shown in figure \ref{trefoilpattern $P_3$} paired with the right handed trefoil. We have $\tau(P_{3,1}(T_{2,3}))=A(y)=5$.}\label{P3pairingRHT}
  \end{minipage}
  \hspace{2in}
  \begin{minipage}[b]{0.3\textwidth}
  \begin{tikzpicture}
\node[anchor=south west,inner sep=0] at (0,0)    {\includegraphics[width=1.2\textwidth]{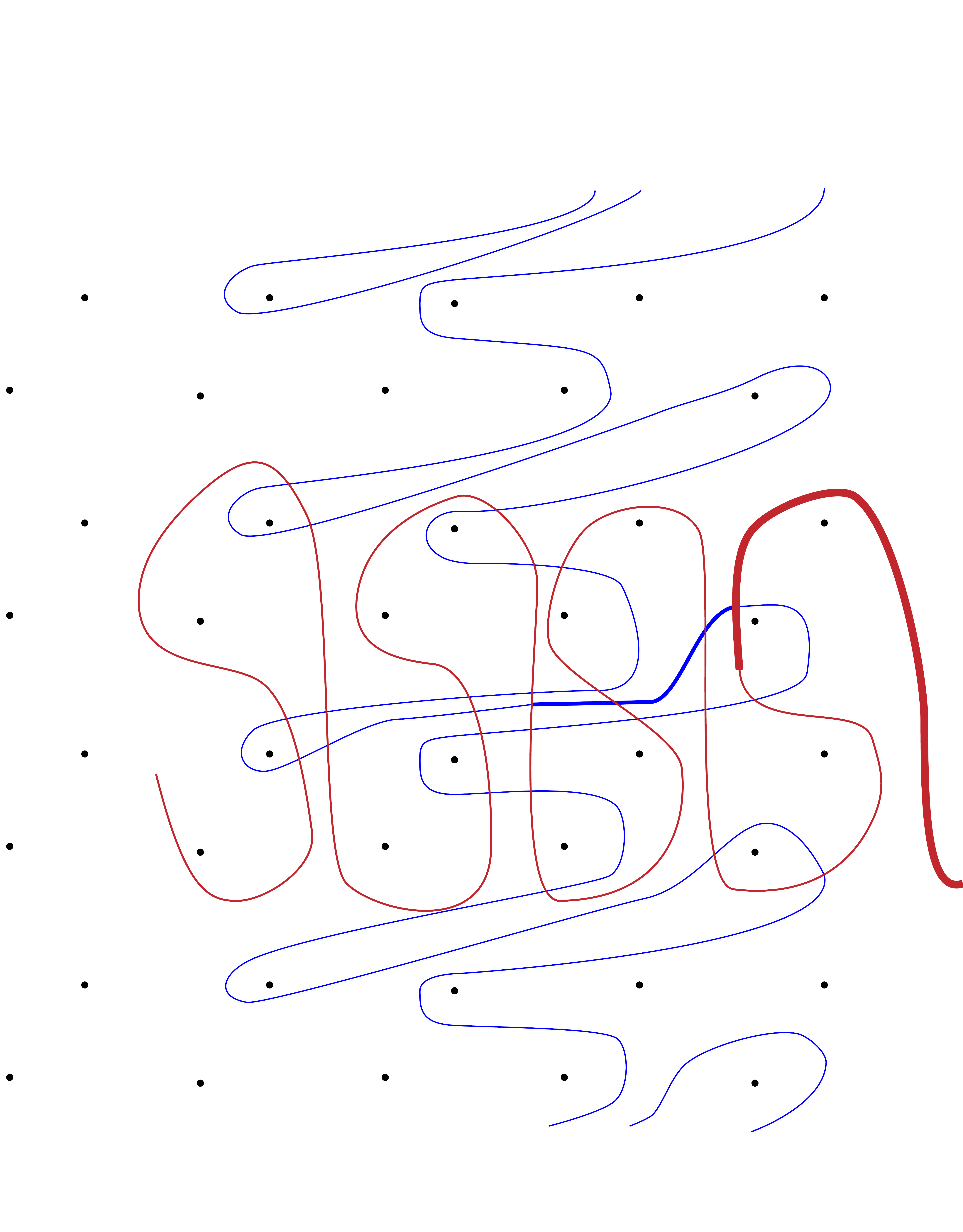}};
\node[font=\tiny] at (2.8,2.75) {$\tikzcirclee{1pt}$};
\node[font=\tiny] at (2.7,2.65) {$x$};
\node[font=\tiny] at (3.85,3.25) {$\tikzcirclee{1pt}$};
\node[font=\tiny] at (4,3.35) {$y$};
\end{tikzpicture}
    \caption{The lift of the trefoil pattern $P_3$ shown in figure \ref{trefoilpattern $P_3$} paired with the left handed trefoil. We find $\tau(P_{3,1}(T_{2,-3}))=A(y)=0$.}\label{P3pairingLHT}
  \end{minipage}
\end{figure}


\begin{figure}
\begin{center}
\begin{tikzpicture}
\node[anchor=south west, inner sep=0] at (0,0) {\includegraphics[scale=.2]{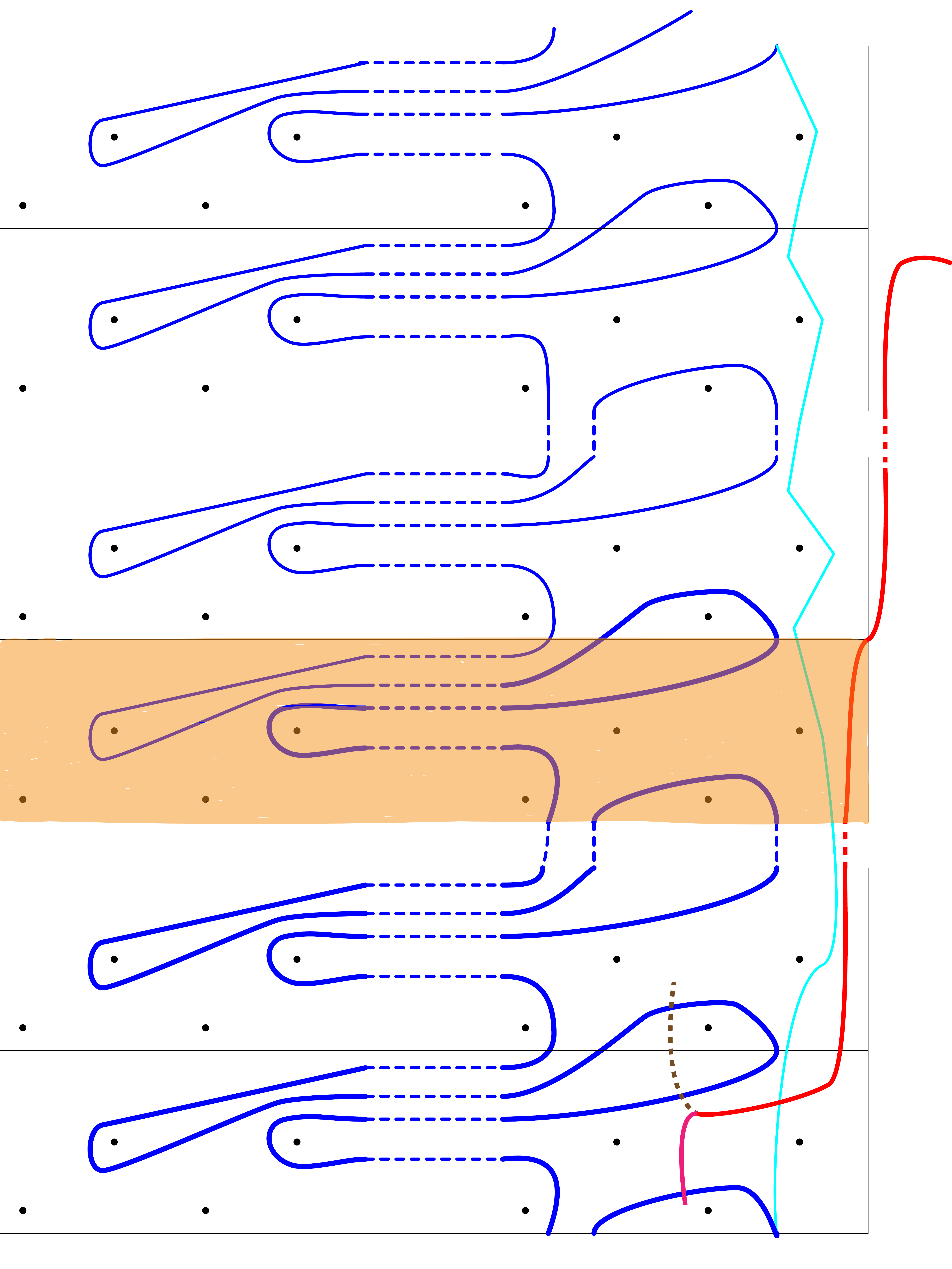}};
\node at (.2,.8) {$w$};
\node at (2.2,.8) {$w$};
\node at (5.9,.8) {$w$};
\node at (7.9,.8) {$w$};
\node at (.2,2.85) {$w$};
\node at (2.2,2.85) {$w$};
\node at (5.9,2.85) {$w$};
\node at(7.9,2.85){$w$};
\node at (.2,5.45) {$w$};
\node at (2.2,5.45){$w$};
\node at (5.9,5.45) {$w$};
\node at(7.9,5.45) {$w$};
\node at (.2,7.45) {$w$};
\node at (2.2,7.45){$w$};
\node at (5.9,7.45) {$w$};
\node at (7.9,7.45) {$w$};
\node at (.2,10) {$w$};
\node at (2.2,10) {$w$};
\node at (5.9,10) {$w$};
\node at (7.9,10) {$w$};
\node at (.2,12) {$w$};
\node at (2.2,12) {$w$};
\node at (5.9,12) {$w$};
\node at (7.9,12) {$w$};
\node at (1.5,1.5) {$z$};
\node at (3.5,1.5) {$z$};
\node at (7,1.5) {$z$};
\node at (9,1.5) {$z$};
\node at (1.5,3.5) {$z$};
\node at (3.5,3.5) {$z$};
\node at (7,3.5) {$z$};
\node at (9,3.5) {$z$};
\node at (1.5,6.1) {$z$};
\node at (3.5,6.1) {$z$};
\node at (7,6.1) {$z$};
\node at (9,6.1) {$z$};
\node at (1.5,8.1) {$z$};
\node at (3.5,8.1) {$z$};
\node at (7,8.1) {$z$};
\node at (9,8.1) {$z$};
\node at (1.5,10.6) {$z$};
\node at (3.5, 10.6) {$z$};
\node at (7,10.6) {$z$};
\node at (9,10.6){$z$};
\node at (1.5,12.6) {$z$};
\node at (3.5,12.6){$z$};
\node at (7,12.6) {$z$};
\node at (9,12.6) {$z$};

\node[font=\tiny] at (5,6.65) {$A=0$};
\node[font=\tiny] at (5.6,6.35) {$A=1$};
\node[font=\tiny] at (4.2,5.9) {$2$};
\node[font=\tiny] at (5.6,5.9) {$p-1$};
\node[font=\tiny] at (5,6.9) {$A=-1$};
\node[font=\tiny] at (5,8.7) {$-(p+1)$};
\node[font=\tiny] at (4.85,8.55) {$\tikzcirclee{2pt}$};
\node[font=\tiny] at (5,8.42) {$x'$};
\node[font=\tiny] at (4.8,4.1) {$(p+1)(\tau(K)-1)$};
\node[font=\tiny] at (4.9,2.1) {$(p+1)\tau(K)$};
\node[font=\tiny] at (5,11.2) {$-(p+1)\tau(K)$};

\node[font=\tiny] at (5,6.35) {$x$};
\node at (4.85,6.5) {$\tikzcirclee{2pt}$};

\node at (7.7,2.2) {$a$};
\node at (7.63,1.95) {$\tikzcirclee{2pt}$};

\node at (7.7,1.1) {$b$};
\node at (7.6,.9) {$\tikzcirclee{2pt}$};
\end{tikzpicture}
\caption{The general case with $\tau(K)>0$ and $\epsilon(K)=\pm1$. $\epsilon(K)=1$ is shown as a \textcolor{green}{dotted} arc, and $\epsilon(K)=-1$ is shown as a \textcolor{pink}{solid} arc}\label{tauepsilonposgeneral}
\end{center}
\end{figure}

\noindent\underline{\textbf{$\tau(K)>0, \epsilon(K)=-1$}}: In this case, that part of the essential component of the immersed curve for $K$ slopes upward for $2\tau(K)$ rows, but it turns down at the bottom and up at the top. See figure \ref{tauepsilonposgeneral}, where we pay attention to the \textcolor{pink}{solid} portion of the essential component of the immersed curve in the bottom right that turns down and contains the intersection point labelled $b$. Since $b$ is the only intersection point remaining after isotoping $\beta$ to the light blue curve, we see that $A(b)=\tau(P_{p,1}(K))$. Using figure \ref{tauepsilonposgeneral}, we see that this intersection point occurs exactly $\tau(K)+1$ rows below the generator with Alexander grading zero. Using Lemma \ref{Alex}, we see that

$$\tau((P_{p,1})(K))=(p+1)(\tau(K)+1)$$


\noindent\underline{\textbf{$\tau(K)<0, \epsilon(K)=1$}}: In this case, the essential component of the immersed curve $\alpha$ slopes downward for $2\tau(K)$ rows, and turns up at the top and down at the bottom, see figure \ref{taunegepsilongeneral} where in the case $\epsilon(K)=1$, we focus on the \textcolor{orange}{solid} portion of the essential component of $\alpha(K)$ in the upper right, which contains the intersection point labeled $a$. Just as in the previous cases we see that $a$ survives the $z$ basepoint spectral sequence and we find the Alexander grading of $a$ by counting how may rows above the generator with Alexander grading zero this intersection point lies. From figure \ref{taunegepsilongeneral} we see that the intersection point $y$ lives exactly $\tau(K)$ rows above the intersection point $x$ with $A(x)=0$. Therefore, by Lemma \ref{Alex} $A(y)=w(P)\tau(K)$. Then, we see that $A(a)-A(y)=1$, so we have

$$A(a)=\tau(P_{p,1}(K))=(p+1)\tau(K)+1.$$

\begin{figure}
\begin{center}
\begin{tikzpicture}
\node[anchor=south west, inner sep=0] at (0,0) {\includegraphics[scale=.2]{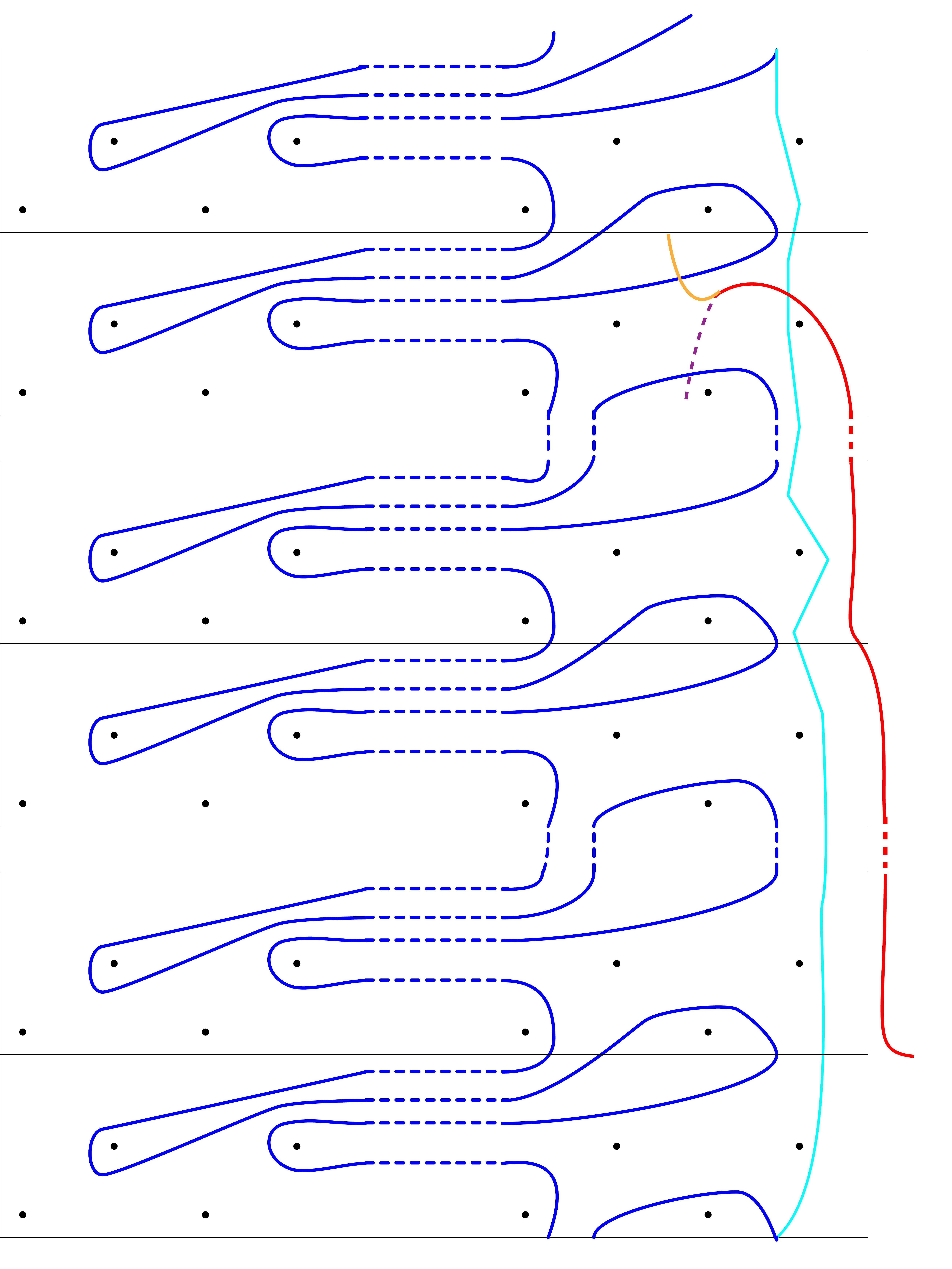}};
\node at (.2,.8) {$w$};
\node at (2.2,.8) {$w$};
\node at (5.9,.8) {$w$};
\node at (7.9,.8) {$w$};
\node at (.2,2.85) {$w$};
\node at (2.2,2.85) {$w$};
\node at (5.9,2.85) {$w$};
\node at(7.9,2.85){$w$};
\node at (.2,5.45) {$w$};
\node at (2.2,5.45){$w$};
\node at (5.9,5.45) {$w$};
\node at(7.9,5.45) {$w$};
\node at (.2,7.45) {$w$};
\node at (2.2,7.45){$w$};
\node at (5.9,7.45) {$w$};
\node at (7.9,7.45) {$w$};
\node at (.2,10) {$w$};
\node at (2.2,10) {$w$};
\node at (5.9,10) {$w$};
\node at (7.9,10) {$w$};
\node at (.2,12) {$w$};
\node at (2.2,12) {$w$};
\node at (5.9,12) {$w$};
\node at (7.9,12) {$w$};
\node at (1.5,1.5) {$z$};
\node at (3.5,1.5) {$z$};
\node at (7,1.5) {$z$};
\node at (9,1.5) {$z$};
\node at (1.5,3.5) {$z$};
\node at (3.5,3.5) {$z$};
\node at (7,3.5) {$z$};
\node at (9,3.5) {$z$};
\node at (1.5,6.1) {$z$};
\node at (3.5,6.1) {$z$};
\node at (7,6.1) {$z$};
\node at (9,6.1) {$z$};
\node at (1.5,8.1) {$z$};
\node at (3.5,8.1) {$z$};
\node at (7,8.1) {$z$};
\node at (9,8.1) {$z$};
\node at (1.5,10.6) {$z$};
\node at (3.5, 10.6) {$z$};
\node at (7,10.6) {$z$};
\node at (9,10.6){$z$};
\node at (1.5,12.6) {$z$};
\node at (3.5,12.6){$z$};
\node at (7,12.6) {$z$};
\node at (9,12.6) {$z$};

\node at (5,6.35) {$x$};
\node at (4.85,6.45) {$\tikzcirclee{2pt}$};
\node[font=\tiny] at (5,6.555) {$A=0$};

\node[font=\tiny] at (5,8.6) {$A=-(p+1)$};

\node[font=\tiny] at (4.9,4) {$A=(p+1)(|\tau(K)|-1)$};
\node[font=\tiny] at (5,2) {$A=(p+1)|\tau(K)|$};
\node[font=\tiny] at (5,11.2) {$A=(p+1)\tau(K)$};

\node at (7.55,11) {$\tikzcirclee{2pt}$};
\node at (7.6,11.2) {$a$};
\node at (5,10.85) {$y$};
\node at (4.85,11.05) {$\tikzcirclee{2pt}$};

\node at (7.5,10.1) {$b$};
\node at (7.72,10) {$\tikzcirclee{2pt}$};
\end{tikzpicture}
\caption{The general case with $\tau(K)<0$ and $\epsilon(K)=\pm1$. $\epsilon(K)=-1$ is shown as a \textcolor{purple}{dotted} arc and $\epsilon(K)=1$ is shown as a \textcolor{orange}{solid} arc}\label{taunegepsilongeneral}
\end{center}
\end{figure}

\noindent\underline{\textbf{$\tau(K)<0, \epsilon(K)=-1$}}: This case is similar to the previous cases. Here the relevant portion of the $\alpha$ immersed curve slopes downward and turns down at the top and up at the bottom, see figure \ref{taunegepsilongeneral} paying attention to the \textcolor{purple}{dotted} portion of the curve in the upper right. The intersection point labelled $b$ is the one that survives the $z$-basepoint spectral sequence. We count the number of rows above the central row that this intersection point occurs to compute $A(b)$. The result is

$$\tau(P_{p,1}(K))=(p+1)(\tau(K)+1).$$

For an example, consider figure \ref{P3pairingLHT}. We see that the intersection point that survives the $z$ basepoint spectral sequence lies on both the bold portion of the $\alpha$ curve and the bold portion of the $\beta$ curve. It is easy to see from the picture that $\tau(P_{3,1}(T_{2,-3}))=0$, since travelling along the bold potion of the $\beta$ curve, we do not cross any $\delta_{w,z}$ arcs.\\

\noindent\underline{\textbf{$\epsilon(K)=0$}}: In this case, we also have $\tau(K)=0$. Hence the essential curve component is horizontal. Therefore, the intersection point that survives the $z$ basepoint spectral sequence has alexander grading $1$, which is the same as $\tau(T_{2,3})$ as expected. \qedhere

\end{proof}

\section{Three genus and fiberedness}\label{threegenusandfibered}

In this section we will prove theorem \ref{fiberedpattern} from the introduction. Recall that the knot Floer homology detects both the three-genus and the fiberedness of a knot $K \subset S^3$ in the following sense. The genus of a knot is the largest Alexander grading supporting non-zero Floer homology by \cite{genusdetection}. Further, the knot is fibered if and only if the knot Floer homology is one dimensional in this top Alexander grading by \cite{surfacedecomp}. 

Recall from \cite{fiberedsatellite} that, for a non-trivial companion knot, the satellite knot $P(K)$ is fibered if and only in the companion knot $K$ is fibered in $S^3$ and the pattern is fibered in the solid torus. Therefore, to prove theorem \ref{fiberedpattern}, it is enough to show that $P_{p,1}(T_{2,3})$ is fibered. Furthermore, for a non trivial knot $K$, we have the classical genus of a satellite formula

\begin{equation}\label{genussatellite}g(P(K))=|w(P)|g(K)+g(P).\end{equation} So, to compute $g(P)$ it is enough to compute $g(P(T_{2,3}))$.

\begin{proof}[Proof of Lemma \ref{patterngenus}]

We will make use of the pairing diagram in figure \ref{generalP_pRHT} which computes $\HFKhat(S^3,P_{p,1}(T_{2,3}))$. In the diagram, we see that the generator $a$ has the largest Alexander grading of any intersection point, and we compute using Lemma \ref{Alex} that $A(a)=p+2$. Hence $g(P_{p,1}(T_{2,3}))=p+2$. Then using equation \ref{genussatellite} we have

$$p+2=g(P_{p,1}(T_{2,3}))=(p+1)g(T_{2,3})+g(P)=p+1+g(P),$$ 
which implies that $g(P)=1$. 

\end{proof}

With Lemma \ref{patterngenus} in hand, we can prove that the for all $p>1$, patterns $P_{p,1}$ are fibered. 

\begin{proof}[Proof of Theorem \ref{fiberedpattern}]

Since knot Floer homology detects fibered knots, and a satellite knot is fibered if and only if the pattern and comanion are fibered, to show that the pattern $P_{p,1}$ is fibered it is enough to show that $\rk(\HFKhat(S^3,P_{p,1}(T_{2,3}),p+2))=1$ for all $p>1$. To this end, consider the pairing diagram for $\HFKhat(S^3,P_{p,1}(T_{2,3}))$. By the proof of Lemma \ref{patterngenus}, we know that $a$ has the largest Alexander grading of any intersection point. To show that $P_{p,1}(K)$ is fibered, we will show that for any other intersection point $x$ in the pairing diagram, we have $A(x)<A(a)$. To this end, note that the Alexander grading is weakly decreasing as we travel up the pairing diagram on the $\beta$ curve. It follows from lemma \ref{Alex} that $A(a)-A(b)=1$ and that $A(x) \leq A(b)$ for any other intersection point $x$. Therefore $\HFKhat(S^3,P_{p,1}(T_{2,3}),p+2)$ is one dimensional for all $p>1$, and so the satellite knot $P_{p,1}(T_{2,3})$ is fibered for all $p>1$. Since a satellite knot with non trivial companion is fibered if and only if both the pattern is fibered and the companion is fibered \cite{fiberedsatellite}, it follows that $P$ is a fibered pattern.\qedhere

\end{proof}

\begin{figure}
\begin{center}
\begin{tikzpicture}
\node[anchor=south west, inner sep=0] at (0,0) {\includegraphics[scale=.3]{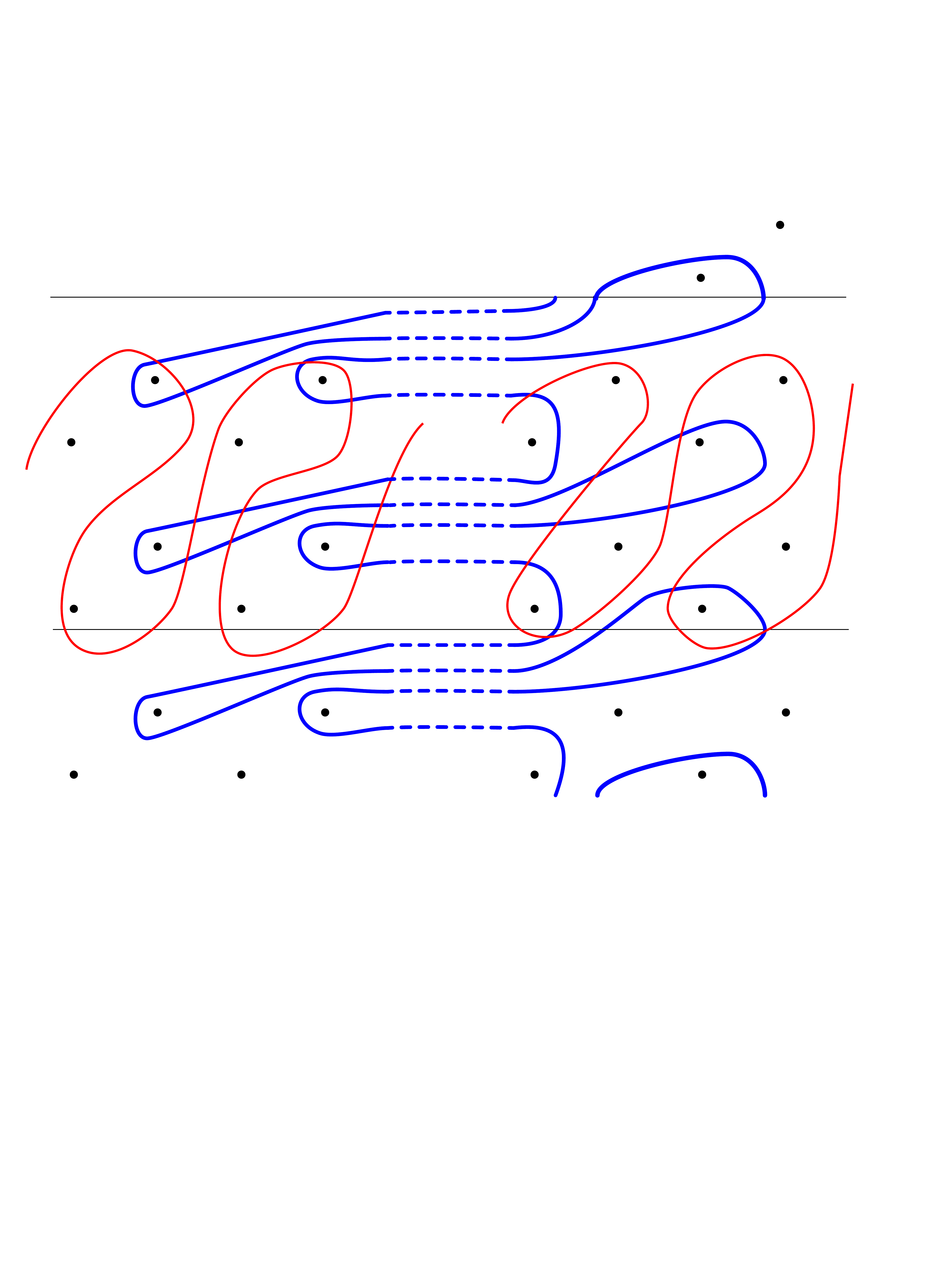}};
\node at (11,8.8) {$a$};
\node at (11,9.2) {$\tikzcirclee{2pt}$};
\node at (9.75,9.8) {$\tikzcirclee{2pt}$};
\node at (9.6,9.9) {$b$};
\node[font=\tiny] at (6.5,11.2) {$A=0$};
\node[font=\tiny] at (6.5,10.85) {$A=1$};
\node[font=\tiny] at (4.92,10.25) {$A=2$};
\node[font=\tiny] at (8.25,10.25) {$A=p-1$};
\node[font=\tiny] at (6.5,9.13) {$A=p$};
\node[font=\tiny] at (6.5,8.8) {$A=p+1$};
\node[font=\tiny] at (6.5,8.45) {$A=p+2$};
\end{tikzpicture}
\caption{The pairing diagram computing $\HFKhat(S^3,P_{p,1}(T_{2,3}))$}\label{generalP_pRHT}
\end{center}
\end{figure}

Recall from the introduction that fibered knots have unique minimal genus Seifert surfaces. Hence, for a fibered pattern $P$ and a fibered knot $K$, the satellite knot $P(K)$ also has a unique minimal genus Seifert surface. Given this, one might wonder when the operation of taking a satellite of a non-trivial knot can increase or decrease the number of non-isotopic Seifert surfaces in the knot complement. In this direction, we prove Propositions \ref{Seifertsurface} and \ref{taut} from the introduction, which imply that for knots with small rank knot Floer homology in the top Alexander grading the process of taking a satellite with a fibered pattern preserves the property of having a unique minimal genus Seifert surface as well as the property of having a depth at most one codimension one taut folitation of the complement. 

\begin{proof}[Proof of Proposition \ref{Seifertsurface}]
Suppose $K$ is a knot with $\rk(\HFKhat(S^3,K,g(K)))<4$ and $P$ is a fibered pattern. Then by \cite{Seifertsurface}*{Theorem 2.3} it follows that $K$ has a unique minimal genus Seifert surface up to isotopy. By equation \ref{nifiberedpattern} we have $\rk(\HFKhat(S^3,P(K),g(P(K))))<4$. Hence $P(K)$ also has a unique minimal genus Seifert surface by \cite{Seifertsurface}*{Theorem 2.3}. Repeating the above argument, we see that $P^i(K)$ has a unique minimal genus Seifert surface up to isotopy for all $i \geq 1$. \qedhere

\end{proof}

\begin{proof}[Proof of Proposition \ref{taut}]

Suppose $K$ is a knot with $\rk(\HFKhat(S^3,K,g(K)))=3$. So $a_{g(K)}$, the coefficient of $(t^g+t^{-g})$ in the symmetrised Alexander polynomial for $K$, is equal to $\chi(\HFKhat(S^3,K,g(K)))$ so is non zero. It follows from \cite{surfacedecomp}*{Theorem 1.8} that $S^3\setminus\nu(K)$ has a depth $\leq 1$ taut foliation transverse to $\partial(\nu(K))$. Then, equation \ref{nifiberedpattern} implies that $\rk(\HFKhat(S^3,P(K),g(P(K))))=3$, and so $a_{g(P(K))}=\chi(\HFKhat(S^3,P(K),g(P(K))))\neq 0$, where $a_{g(P(K))}$ is the analogous coefficient of the Alexnader polynomial for $P(K)$. So $S^3\setminus P(K)$ has a depth $\leq 1$ taut foliation transverse to $\partial\nu(K)$ again by \cite{surfacedecomp}*{Theorem 1.8}. Ths argument can be repeated to show that $S^3\setminus P^i(K)$ also has a depth $\leq 1$ taut foliation transverse to $\partial \nu(P^i(K))$ for all $i \geq 1$. \qedhere

\end{proof}

\section{Next to top Alexander grading}\label{nexttotop}

In this section we prove Theorem \ref{monodromy} from the introduction. First, we recall the notion of right and left veering monodromy following \cite{baldwinveering}. Suppose that $\Sigma$ is a surface with non-empty boundary and $a$ and $b$ are two properly embedded arcs in $\Sigma$. We say that $a$ is to the right of $b$ at $p$, denoted $a \geq_p b$ if $p$ is a common endpoint of both arcs and either $a$ is isotopic to $b$ rel boundary, or after isotoping $a$ rel boundary so that it intersects $b$ minimally, $a$ is to the right of $b$ in a neighborhood of $p$.
Now, suppose that $\phi: \Sigma \to \Sigma$ is a homeomorphism of $\Sigma$ which restricts to the identity on a boundary component $B$ of $\Sigma$. Then we say that $\phi$ is right veering at $B$ if
$$\phi(a)\geq_p a$$ for every properly embedded arc $a \subset \Sigma$ and every $p \in \partial a \cap B$. A map $\phi$ is called right veering if it is right veering at every boundary component of $\Sigma$. We call a map $\phi$ left veering if its inverse is right veering. 






Recall from Theorem \ref{veering} that, for a fibered knot, we can detect when the monodromy of a fibration is right or left veering by computing the next to top Alexander graded piece of the knot floer homology to be one dimensional.

\begin{figure}[!tbp]
  \centering
  \begin{minipage}[b]{0.3\textwidth}
  \begin{tikzpicture}
\node[anchor=south west,inner sep=0] at (0,0)    {\includegraphics[width=1.2\textwidth]{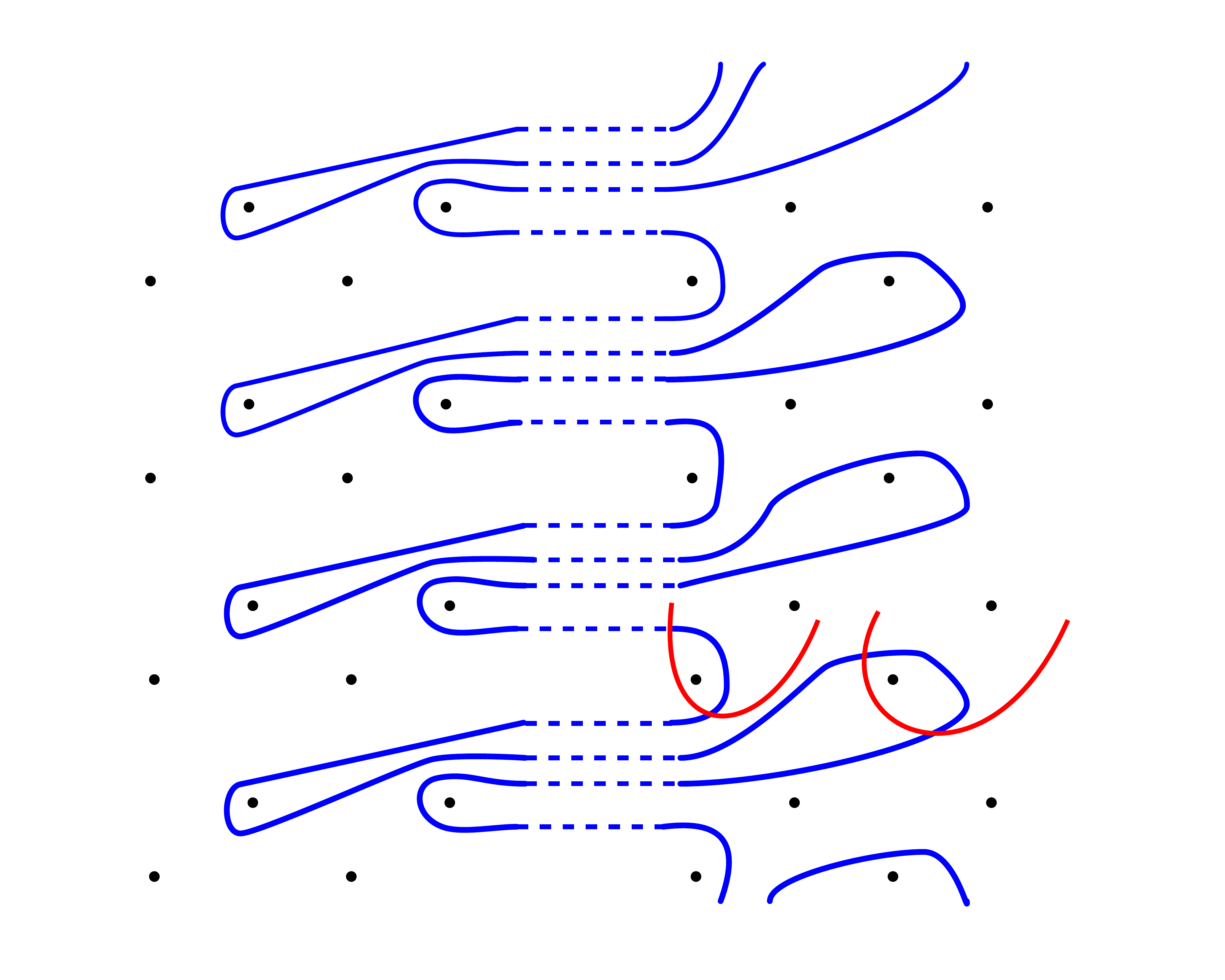}};
\node[font=\tiny] at (3.8,1) {$\tikzcirclee{1pt}$};
\node[font=\tiny] at (4,.9) {$a$};
\node[font=\tiny] at (3.55,1.35) {$\tikzcirclee{1pt}$};
\node[font=\tiny] at (3.65,1.4) {$b$};

\end{tikzpicture}
    \caption{$P_p$ paired with a fibered knot with $\tau(K)=g(K)$}\label{taupos}
  \end{minipage}
  \hspace{2in}
  \begin{minipage}[b]{0.3\textwidth}
  \begin{tikzpicture}
\node[anchor=south west,inner sep=0] at (0,0)    {\includegraphics[width=1.2\textwidth]{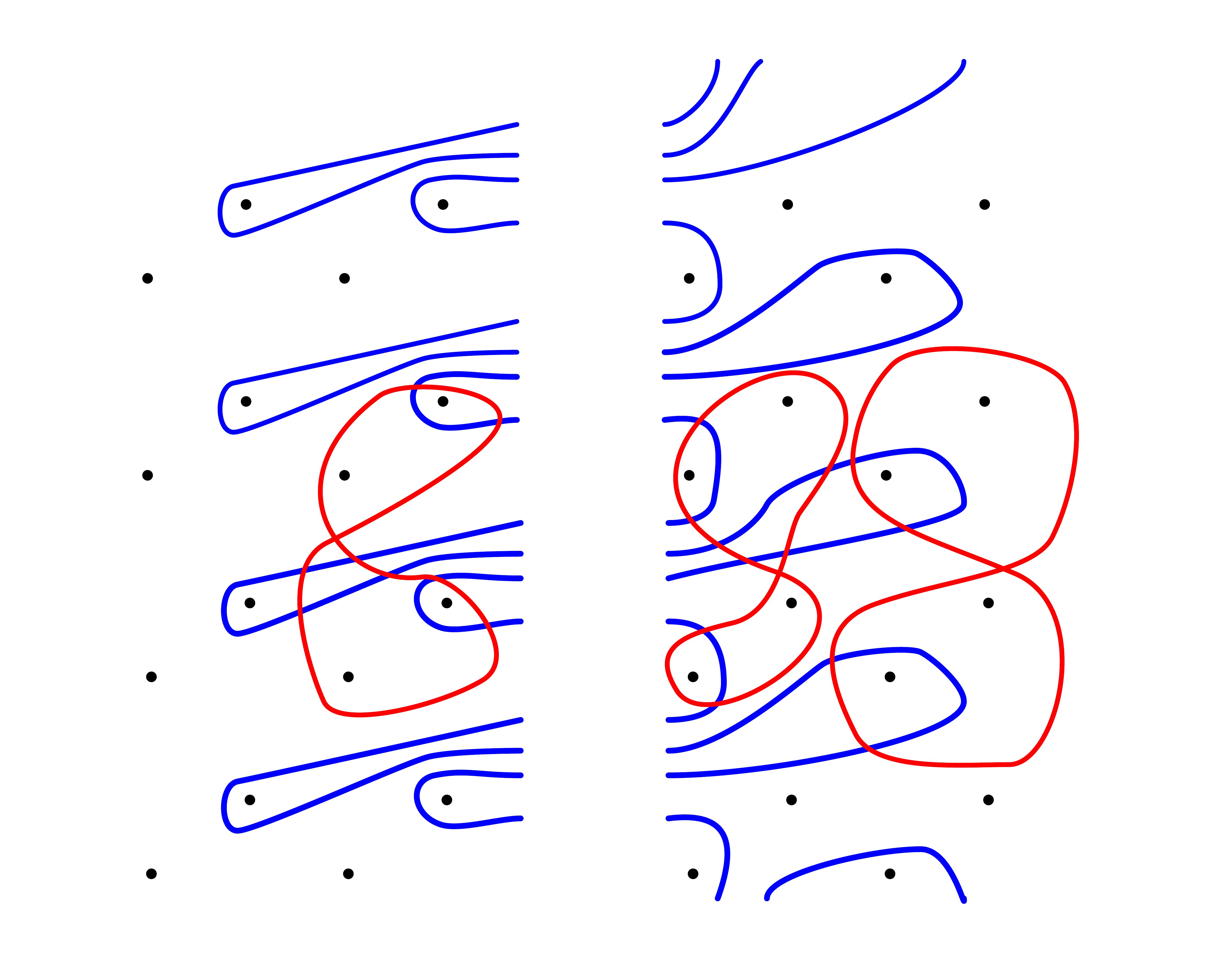}};
\node[font=\tiny] at (3.55,.95) {$\tikzcirclee{1pt}$};
\node[font=\tiny] at (3.5,.8) {$a$};
\node[font=\tiny] at (3.4,1.3) {$\tikzcirclee{1pt}$};
\node[font=\tiny] at (3.5,1.2) {$b$};

\end{tikzpicture}
    \caption{$P_p$ paired with a fibered thin knot $K$ with $|\tau(K)|<g(K)$}\label{Pppairingthin}
  \end{minipage}
\end{figure}
\begin{lemma}\label{fiberedtau=g}
If $K$ is a fibered knot with $\tau(K)=\pm g(K)$, then $$\rk(\HFKhat(S^3,P_{p,1}(K),g(P_{p,1}(K))-1))=1.$$

\end{lemma}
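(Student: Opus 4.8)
The plan is to compute $\HFKhat(S^3,P_{p,1}(K))$ in the next-to-top Alexander grading directly from a pairing diagram, using Chen's immersed-curve pairing theorem together with the explicit description of the essential component $\gamma_0$ of $\alpha(K)$ from Lemma \ref{essentialcomponent}. Since $K$ is fibered, we have $\rk(\HFKhat(S^3,K,g(K)))=1$, and since $\tau(K)=\pm g(K)$ the invariant $\epsilon(K)=\pm 1$ with $|\tau(K)|=g(K)$; by the elliptic-involution symmetry it suffices to treat, say, $\tau(K)=g(K)>0$, $\epsilon(K)=1$ (the case $\tau(K)=-g(K)$ follows by mirroring). In this regime the essential component $\gamma_0$ slopes upward for exactly $2g(K)$ rows and turns down at the top, up at the bottom, and the peg-board representative attains its extreme heights with the maximal number of pegs dictating $g(K)$; all other components of $\alpha(K)$ and the non-essential part of $\gamma_0$ sit in a neighborhood of the meridian, which $\beta(P_{p,1})$ can be pushed off by an isotopy crossing only $z$-basepoints. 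The figure to have in mind is Figure \ref{taupos}, which shows $\beta(P_{p,1})$ paired with such a $K$.

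First I would identify the top Alexander grading. By Corollary \ref{satellitegenus}, $g(P_{p,1}(K))=(p+1)g(K)+1$, so I must show $\HFKhat$ in grading $(p+1)g(K)$ has rank one. From the pairing diagram, the intersection points of $\beta$ with the outermost "arm" of $\gamma_0$ near its top turn — the region responsible for the generators in the two highest Alexander gradings — consist of exactly two special points (the analogues of $a$ and $b$ in Figure \ref{taupos} and in Figure \ref{generalP_pRHT}), with $A(a)-A(b)=1$ by Lemma \ref{Alex}, together with possibly some intersection points supported on the $p-1$ "middle arms" of $\beta(P_{p,1})$ coming from the cabling part of the pattern. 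The key local computation, entirely analogous to the proof of Lemma \ref{patterngenus} and Theorem \ref{fiberedpattern}, is that walking up the $\beta$ curve the Alexander grading is weakly decreasing, that only one intersection point ($a$) achieves the maximum $(p+1)g(K)+1$ and only one ($b$) achieves the next value $(p+1)g(K)$, while every intersection point on the middle arms of the pattern has Alexander grading at most $(p+1)g(K)-1$. For the last point one uses that those arms are identical to the arms of the $(p,1)$-cable pattern, whose top-graded contributions near the turn of $\gamma_0$ are forced down by the fact that $\rk\HFKhat(S^3,K,g(K))=1$; equivalently, one inputs the fibered companion's immersed curve which, near its top, is a single strand, so pairing produces a single generator per relevant arm and the Alexander gradings separate as claimed.

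Then I would run the $z$-basepoint argument: after cancelling all differentials of filtration difference one and two via isotopies recording $A$-buoys, the only surviving generator in grading $(p+1)g(K)$ is $b$ (it has no available Whitney disk to $a$ that crosses $z$ but not $w$, since $A(a)-A(b)=1$ and that disk must be one of the cancelled length-one disks in the full filtration spectral sequence, but here we only need the single-graded piece), giving $\rk\HFKhat(S^3,P_{p,1}(K),(p+1)g(K)-1)=1$ once we re-index: the next-to-top grading is $g(P_{p,1}(K))-1=(p+1)g(K)$, and the generator witnessing it is exactly $b$. I expect the main obstacle to be verifying carefully that none of the middle cable arms of $\beta(P_{p,1})$ contribute any intersection point of Alexander grading equal to the next-to-top value — i.e., that the "extra arm" construction of $P_{p,1}$ does not create spurious generators two steps below the top — and that the differentials behave so that no cancellation in the $z$-filtration spectral sequence reintroduces rank in that grading; both reduce to the grading bookkeeping via Lemma \ref{Alex} and the winding-number shift from \cite{Chen}*{Lemma 6.3}, as in Figure \ref{generalP_pRHT}, but the count must be done with the general fibered companion curve rather than the trefoil.
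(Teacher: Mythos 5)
Your second paragraph matches the paper's proof essentially point for point: you use Lemma \ref{essentialcomponent} to pin down the shape of $\gamma_0$ near height $\pm g(K)$, use fiberedness of $K$ to conclude no other component of $\alpha(K)$ is present at that extreme height, identify the two intersection points $a$ and $b$ of Figure \ref{taupos}, compute $A(a)-A(b)=1$ by Lemma \ref{Alex}, and invoke the weakly decreasing behaviour of the Alexander grading along $\beta$ to conclude no other intersection achieves the next-to-top value. That is the argument, and it is complete at that stage.

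Your final paragraph, however, is a conceptual misstep. Running the $z$-basepoint filtration spectral sequence (cancelling $z$-crossing disks, tracking $A$-buoys) is the machinery in Section \ref{tauofpairing} for extracting $\tau(P(K))$; it plays no role in computing $\rk\HFKhat(S^3,P(K),j)$ for a fixed Alexander grading $j$. The rank of $\HFKhat$ in grading $j$ is simply the number of intersection points with that Alexander grading, once the curves have been isotoped so that no two intersection points are connected by a Whitney disk missing both $w$ and $z$ (the paper assumes this normalization when setting up the pairing diagram). No cancellation of $z$-crossing disks is called for, and speaking of ``the only surviving generator in grading $(p+1)g(K)$'' after such cancellations conflates $\HFKhat$ with $\HFhat(S^3)$. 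Dropping that paragraph leaves a proof that is correct and coincides with the paper's.
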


\begin{proof}

Since $K$ is a fibered knot with $\tau(K)=\pm g(K)$, we must have $\epsilon(K)=sgn(\tau(K))$ and the essential curve component has the form described in Lemma \ref{essentialcomponent}. Since the knot is fibered, there are no other components of the immersed curve that pass through at height $-g(K)$, so the red arcs shown in figure \ref{taupos} are representative of what the immersed curve of a general fibered knot with $\tau(K)=\pm g(K)$ looks like near the bottom row of the lifted pairing diagram. As we showed in the proof of Theorem \ref{tauoftrefoil}, the knot $P_{p,1}(K)$ has one dimensional Floer homology in the top most Alexander grading, and the intersection point labelled $a$ carries this Alexander grading. Continuing with this reasoning we have that $A(b)=g(P_{p,1}(K))-1$ by Lemma \ref{Alex}, since starting from $a$, we encounter one $\delta_{w,z}$ arc before we reach the intersection point labelled $b$. Now, no other intersections between the $\beta$ curve and $\alpha(K)$ occur before we reach another $\delta_{w,z}$ arc while traversing $\beta$ up the diagram. Since the Alexander grading is weakly decreasing as we travel up the $\beta$ curve, it follows that $b$ is the unique intersection point with Alexander grading $g(P_{p,1}(K))-1$, hence $\HFKhat(S^3,P_{p,1}(K),g(P_{p,1}(K))-1)$ is one dimensional, as desired. \qedhere

\end{proof}

\begin{lemma}\label{thinmonodromy}
Suppose $K$ is a fibered thin knot with $|\tau(K)|<g(K)$. Then $$\rk(\HFKhat(S^3,P_{p,1}(K),g(P_{p,1}(K))-1))=1$$
\end{lemma}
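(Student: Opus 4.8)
The plan is to mimic closely the argument given in the proof of Lemma \ref{fiberedtau=g}, replacing the shape of the essential component when $\tau(K)=\pm g(K)$ by the shape forced on a thin fibered knot with $|\tau(K)|<g(K)$. Since $K$ is fibered, $\rk(\HFKhat(S^3,K,g(K)))=1$, so the peg-board representative of $\alpha(K)$ attains its extreme heights (at $\pm g(K)$) exactly once on each side, and by Lemma \ref{essentialcomponent} the essential component $\gamma_0$ accounts for the behavior near height $-g(K)$ while all remaining pieces of $\alpha(K)$ sit in a neighborhood of the meridian. First I would record what the thin hypothesis buys us: for a thin knot the full immersed multicurve $\alpha(K)$ is (up to homotopy) a collection of embedded curves whose local pictures are "staircases" of unit steps with turnbacks, so near the minimum height the essential component of $\alpha(K)$ looks like a single unit-width zig-zag turning back at height $-g(K)$, with the adjacent turnback at height $-g(K)+1$. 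This is the picture drawn in red in figure \ref{Pppairingthin}, and the key point I would emphasize is that between the turnback at the very bottom and the next feature of $\alpha(K)$ as one travels up $\beta$, no further intersection points with $\beta$ occur until a $\delta_{w,z}$ arc is crossed.

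Given this, the counting is identical to that in Lemma \ref{fiberedtau=g}. As established in the proof of Theorem \ref{tauoftrefoil} and in the proof of Lemma \ref{patterngenus}, $\HFKhat(S^3,P_{p,1}(K))$ is one-dimensional in its top Alexander grading $g(P_{p,1}(K))$, carried by the intersection point $a$ in figure \ref{Pppairingthin} lying on the bottom-most relevant arc of the pairing diagram. Traversing $\beta$ upward from $a$ we cross exactly one $\delta_{w,z}$ arc before reaching the intersection point $b$, so by Lemma \ref{Alex} we get $A(b)=g(P_{p,1}(K))-1$. I would then argue that no other intersection point of $\beta$ with $\alpha(K)$ has this Alexander grading: since the Alexander grading is weakly decreasing as we move up $\beta$, and since the next intersection along $\beta$ after $b$ only occurs after crossing another $\delta_{w,z}$ arc (using that the non-essential components of $\alpha(K)$ lie in a neighborhood of the meridian, away from which $\beta$ can be isotoped by crossing only $z$ basepoints, exactly as in the proof of Theorem \ref{monodromy}), every intersection point other than $a$ and $b$ has Alexander grading $\le g(P_{p,1}(K))-2$. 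Hence $\HFKhat(S^3,P_{p,1}(K),g(P_{p,1}(K))-1)$ is one-dimensional.

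The main obstacle I anticipate is justifying rigorously that the thin hypothesis forces the essential component of $\alpha(K)$ to have exactly the unit-step turnback shape near height $-g(K)$ claimed in figure \ref{Pppairingthin}, rather than, say, a wider turnback that would introduce an extra intersection with $\beta$ at Alexander grading $g(P_{p,1}(K))-1$. This amounts to knowing the local structure of $\widehat{\mathrm{HFK}}$ (equivalently $\mathrm{CFK}^\infty$) of a thin knot near its top Alexander grading; the cleanest route is to invoke the standard fact that the $\mathrm{CFK}^\infty$ of a thin knot is, after a filtered change of basis, a direct sum of a single "staircase" of step length one determined by $\tau(K)$ together with "box" summands, and that the box summands contribute only components of $\alpha(K)$ supported in a neighborhood of the meridian (so they never reach height $\pm g(K)$ once $|\tau(K)|<g(K)$, because a box of maximal Alexander width would force $\rk(\HFKhat(S^3,K,g(K)))\ge 2$, contradicting fiberedness). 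Once this structural input is in place, the rest of the argument is the same row-counting in the pairing diagram used in Lemma \ref{fiberedtau=g}, and Theorem \ref{monodromy} follows by combining Lemma \ref{fiberedtau=g} and Lemma \ref{thinmonodromy}.
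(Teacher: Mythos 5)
Your proposal has the right overall architecture (invoke the Petkova thin-knot decomposition, then repeat the row-counting from Lemma \ref{fiberedtau=g}), but there is a genuine error in the middle that the paper's proof resolves differently. You attribute the behavior at height $\pm g(K)$ to the essential component $\gamma_0$: you write that ``the essential component $\gamma_0$ accounts for the behavior near height $-g(K)$'' and that near the bottom ``the essential component of $\alpha(K)$ looks like a single unit-width zig-zag turning back at height $-g(K)$.'' This is exactly backwards for the case $|\tau(K)|<g(K)$: by Lemma \ref{essentialcomponent}, the essential component's turnbacks occur at heights $\pm\tau(K)$, which are strictly inside $\pm g(K)$, so $\gamma_0$ never reaches the extreme rows at all. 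What occupies the row at height $-g(K)$ is instead a single figure-eight component coming from a box summand of $\CFK^{\infty}(K)$ (using that $K$ is thin, so all boxes have length one); this is the content of the sentence in the paper's proof invoking \cite{petkovathin} and the shape drawn in figure \ref{Pppairingthin}.

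Your attempted justification in the last paragraph actually pushes the error further: you claim the box components ``never reach height $\pm g(K)$ \ldots because a box of maximal Alexander width would force $\rk(\HFKhat(S^3,K,g(K)))\ge 2$.'' This is not so --- a single length-one box with its top corner at Alexander grading $g(K)$ contributes exactly rank $1$ to $\HFKhat(S^3,K,g(K))$, so fiberedness is perfectly consistent with (indeed, in the $|\tau(K)|<g(K)$ case, forces) exactly one box reaching $g(K)$. Moreover, if neither the staircase nor any box reached $\pm g(K)$, as your reasoning would imply, then nothing would realize the top Alexander grading, contradicting $g(K)$ being the genus. The upshot is that the geometric input you feed into the pairing-diagram count is the wrong local picture; the row-counting must be carried out for the figure-eight shape (which has a crossing and two strands near the bottom, unlike the single turnback of $\gamma_0$), which is what the paper does against figure \ref{Pppairingthin}. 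The rest of the argument --- that $a$ is the unique intersection at the top grading, that $b$ sits one $\delta_{w,z}$ arc away, and that all other intersections lie at strictly lower Alexander grading --- is the right conclusion, but it needs to be justified against the figure-eight picture rather than the essential-component picture.
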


\begin{proof}

First, recall from \cite{petkovathin} that $\CFK^{\infty}$ has a simultaneously vertically and horizontally simplified basis with repect to which it decomposes as a direct sum of a staircase summand and boxes, and all horizontal and vertical differentials have length one. In this case the immersed curve $\alpha(K)$ consists of the essential component together with figure eight components, as shown in figure \ref{Pppairingthin}. Since $|\tau(K)|<g(K)$, the essential component of the immersed curve doesn't pass through at height $\pm g(K)$ and the portion that does consists of a single figure eight component, as shown in figure \ref{Pppairingthin}. We know that the intersection point $a$ represents the sole generator with Alexander grading $g(P_{p,1}(K))$, and the generator $b$ has Alexander grading $g(P_{p,1}(K))-1$. Similar to the proof of Lemma \ref{fiberedtau=g}, we see from the diagram that any other intersection point has Alexander grading $< g(P_{p,1}(K))-1$. Therefore $b$ is the sole intersection point with Alexander grading $g(P_{p,1}(K))-1$ so $\HFKhat(S^3,P_{p,1}(K),g(P_{p,1}(K))-1)$ is one dimensional, as desired. \qedhere.

\end{proof}

\begin{proof}[Proof of Theorem \ref{monodromy}]
If $K$ is any fibered knot such that $\tau(K)=\pm g(K)$, then Lemma \ref{fiberedtau=g} implies that $P_{p,1}(K)$ has left or right veering monodromy. If $K$ is any fibered thin knot such that $|\tau(K)|<g(K)$, then Lemma \ref{thinmonodromy} implies that $P_{p,1}(K)$ has left or right veering monodromy. \qedhere

\end{proof}

Finally, we prove Proposition \ref{thinsatellite} from the introduction. 
\begin{proof}[Proof of Proposition \ref{thinsatellite}]
Suppose $K$ is a non-trivial fibered thin knot with $|\tau(K)|<g(K)$. By Theorem \ref{monodromy}, the fibered knot $P_{p,1}(K)$ has right or left veering monodromy. Therefore, by \cite{baldwinveering}*{Corollary 1.7} to show that $P_{p,1}(K)$ is not thin it is enough to show for each $p>1$ that $|\tau(P_{p,1}(K)|<g(P_{p,1}(K))$. By Corollary \ref{satellitegenus} we know that $g(P_{p,1}(K))=(p+1)g(K)+1$. 
In the case $\epsilon(K)=0$, Theorem \ref{tauoftrefoil} implies $\tau(P_{p,1}(K))=1<(p+1)g(K)+1=g(P_{p,1}(K)),$ since $g(K)>0$. In the case $\epsilon(K)=1$, Theorem \ref{tauoftrefoil} implies $|\tau(P_{p,1}(K))|\leq (p+1)|\tau(K)|+1<(p+1)g(K)+1=g(P_{p,1}(K)),$ where the strict inequality is by assumption. In the case $\epsilon(K)=-1$ we have that $-g < \tau(K)<g$. Then $|\tau(P_{p,1}(K))|=(p+1)|\tau(K)+1|<(p+1)g(K)<(p+1)g(K)+1=g(P_{p,1}(K)).$\qedhere

\end{proof}

\bibliographystyle{plain}
\bibliography{trefoilpatterns}

@article {HRW,
    AUTHOR = {Hanselman, Jonathan and Rasmussen, Jacob and Watson, Liam},
     TITLE = {Heegaard {F}loer homology for manifolds with torus boundary:
              properties and examples},
   JOURNAL = {Proc. Lond. Math. Soc. (3)},
  FJOURNAL = {Proceedings of the London Mathematical Society. Third Series},
    VOLUME = {125},
      YEAR = {2022},
    NUMBER = {4},
     PAGES = {879--967},
      ISSN = {0024-6115},
   MRCLASS = {57K31 (57K18 57R58)},
  MRNUMBER = {4500201},
       DOI = {10.1112/plms.12473},
       URL = {https://doi.org/10.1112/plms.12473},
}

@misc{Chen,
  doi = {10.48550/ARXIV.1912.07914},
  url = {https://arxiv.org/abs/1912.07914},
  author = {Chen, Wenzhao},
  keywords = {Geometric Topology (math.GT), FOS: Mathematics, FOS: Mathematics, 57M25, 57R58},
  title = {Knot {F}loer Homology of satellite knots with (1,1)-patterns},
  publisher = {arXiv},
  year = {2019},
  copyright = {arXiv.org perpetual, non-exclusive license}
}

@article{LOT,
    AUTHOR = {Lipshitz, Robert and Ozsvath, Peter S. and Thurston, Dylan P.},
     TITLE = {Bordered {H}eegaard {F}loer homology},
   JOURNAL = {Mem. Amer. Math. Soc.},
  FJOURNAL = {Memoirs of the American Mathematical Society},
    VOLUME = {254},
      YEAR = {2018},
    NUMBER = {1216},
     PAGES = {viii+279},
      ISSN = {0065-9266},
      ISBN = {978-1-4704-2888-4; 978-1-4704-4748-9},
   MRCLASS = {57R58 (53D40 57M27 57R57)},
  MRNUMBER = {3827056},
MRREVIEWER = {Paolo Ghiggini},
       DOI = {10.1090/memo/1216},
       URL = {https://doi.org/10.1090/memo/1216},
}

@article {Hom,
    AUTHOR = {Hom, Jennifer},
     TITLE = {Bordered {H}eegaard {F}loer homology and the tau-invariant of
              cable knots},
   JOURNAL = {J. Topol.},
  FJOURNAL = {Journal of Topology},
    VOLUME = {7},
      YEAR = {2014},
    NUMBER = {2},
     PAGES = {287--326},
      ISSN = {1753-8416},
   MRCLASS = {57M27 (57M25 57R58)},
  MRNUMBER = {3217622},
MRREVIEWER = {Arunima Ray},
       DOI = {10.1112/jtopol/jtt030},
       URL = {https://doi.org/10.1112/jtopol/jtt030},
}

@article {surfacedecomp,
    AUTHOR = {Juh\'{a}sz, Andr\'{a}s},
     TITLE = {{F}loer homology and surface decompositions},
   JOURNAL = {Geom. Topol.},
  FJOURNAL = {Geometry \& Topology},
    VOLUME = {12},
      YEAR = {2008},
    NUMBER = {1},
     PAGES = {299--350},
      ISSN = {1465-3060},
   MRCLASS = {57M27 (57R58)},
  MRNUMBER = {2390347},
MRREVIEWER = {Stanislav Jabuka},
       DOI = {10.2140/gt.2008.12.299},
       URL = {https://doi.org/10.2140/gt.2008.12.299},
}

@article {genusdetection,
    AUTHOR = {Ozsv\'{a}th, Peter and Szab\'{o}, Zolt\'{a}n},
     TITLE = {Holomorphic disks and genus bounds},
   JOURNAL = {Geom. Topol.},
  FJOURNAL = {Geometry and Topology},
    VOLUME = {8},
      YEAR = {2004},
     PAGES = {311--334},
      ISSN = {1465-3060},
   MRCLASS = {57M27 (53D35 57N10 57R58)},
  MRNUMBER = {2023281},
MRREVIEWER = {Jacob Andrew Rasmussen},
       DOI = {10.2140/gt.2004.8.311},
       URL = {https://doi.org/10.2140/gt.2004.8.311},
}

@article {Nifibered,
    AUTHOR = {Ni, Yi},
     TITLE = {Knot {F}loer homology detects fibred knots},
   JOURNAL = {Invent. Math.},
  FJOURNAL = {Inventiones Mathematicae},
    VOLUME = {170},
      YEAR = {2007},
    NUMBER = {3},
     PAGES = {577--608},
      ISSN = {0020-9910},
   MRCLASS = {57R58 (57M27 57R30)},
  MRNUMBER = {2357503},
MRREVIEWER = {Stanislav Jabuka},
       DOI = {10.1007/s00222-007-0075-9},
       URL = {https://doi.org/10.1007/s00222-007-0075-9},
}

@article {schubert,
    AUTHOR = {Schubert, Horst},
     TITLE = {Knoten und {V}ollringe},
   JOURNAL = {Acta Math.},
  FJOURNAL = {Acta Mathematica},
    VOLUME = {90},
      YEAR = {1953},
     PAGES = {131--286},
      ISSN = {0001-5962},
   MRCLASS = {55.0X},
  MRNUMBER = {72482},
MRREVIEWER = {R. H. Fox},
       DOI = {10.1007/BF02392437},
       URL = {https://doi.org/10.1007/BF02392437},
}

@article {Seifertsurface,
    AUTHOR = {Juhasz, Andras},
     TITLE = {Knot {F}loer homology and {S}eifert surfaces},
   JOURNAL = {Algebr. Geom. Topol.},
  FJOURNAL = {Algebraic \& Geometric Topology},
    VOLUME = {8},
      YEAR = {2008},
    NUMBER = {1},
     PAGES = {603--608},
      ISSN = {1472-2747},
   MRCLASS = {57M27 (57R58)},
  MRNUMBER = {2443240},
MRREVIEWER = {Tam\'{a}s K\'{a}lm\'{a}n},
       DOI = {10.2140/agt.2008.8.603},
       URL = {https://doi.org/10.2140/agt.2008.8.603},
}

@article {Levinemazur,
    AUTHOR = {Levine, Adam Simon},
     TITLE = {Nonsurjective satellite operators and piecewise-linear
              concordance},
   JOURNAL = {Forum Math. Sigma},
  FJOURNAL = {Forum of Mathematics. Sigma},
    VOLUME = {4},
      YEAR = {2016},
     PAGES = {Paper No. e34, 47},
   MRCLASS = {57M27 (57Q60 57R58)},
  MRNUMBER = {3589337},
MRREVIEWER = {Micha\l  Rafa\l  Jab\l onowski},
       DOI = {10.1017/fms.2016.31},
       URL = {https://doi.org/10.1017/fms.2016.31},
}

@misc{cabling,
      title={Cabling in terms of immersed curves}, 
      author={Jonathan Hanselman and Liam Watson},
      year={2019},
      eprint={1908.04397},
      archivePrefix={arXiv},
      primaryClass={math.GT}
}

@article {fiberedsatellite,
    AUTHOR = {Hirasawa, Mikami and Murasugi, Kunio and Silver, Daniel S.},
     TITLE = {When does a satellite knot fiber?},
   JOURNAL = {Hiroshima Math. J.},
  FJOURNAL = {Hiroshima Mathematical Journal},
    VOLUME = {38},
      YEAR = {2008},
    NUMBER = {3},
     PAGES = {411--423},
      ISSN = {0018-2079},
   MRCLASS = {57M25},
  MRNUMBER = {2477750},
MRREVIEWER = {G. Burde},
       URL = {http://projecteuclid.org/euclid.hmj/1233152778},
}

@article {Nisutured,
    AUTHOR = {Ni, Yi},
     TITLE = {Sutured {H}eegaard diagrams for knots},
   JOURNAL = {Algebr. Geom. Topol.},
  FJOURNAL = {Algebraic \& Geometric Topology},
    VOLUME = {6},
      YEAR = {2006},
     PAGES = {513--537},
      ISSN = {1472-2747},
   MRCLASS = {57M25 (57M27 57N10 57R58)},
  MRNUMBER = {2220687},
MRREVIEWER = {Stanislav Jabuka},
       DOI = {10.2140/agt.2006.6.513},
       URL = {https://doi.org/10.2140/agt.2006.6.513},
}

@misc{Baldwinveering,
  doi = {10.48550/ARXIV.2204.04093},
  
  url = {https://arxiv.org/abs/2204.04093},
  
  author = {Baldwin, John A. and Ni, Yi and Sivek, Steven},
  
  keywords = {Geometric Topology (math.GT), Symplectic Geometry (math.SG), FOS: Mathematics, FOS: Mathematics},
  
  title = {Floer homology and right-veering monodromy},
  
  publisher = {arXiv},
  
  year = {2022},
  
  copyright = {Creative Commons Attribution 4.0 International}
}

@misc{Niveering,
  doi = {10.48550/ARXIV.2007.11774},
  
  url = {https://arxiv.org/abs/2007.11774},
  
  author = {Ni, Yi},
  
  keywords = {Geometric Topology (math.GT), FOS: Mathematics, FOS: Mathematics},
  
  title = {Exceptional surgeries on hyperbolic fibered knots},
  
  publisher = {arXiv},
  
  year = {2020},
  
  copyright = {arXiv.org perpetual, non-exclusive license}
}

@misc{HRW1,
      title={Bordered {F}loer homology for manifolds with torus boundary via immersed curves}, 
      author={Jonathan Hanselman and Jacob Rasmussen and Liam Watson},
      year={2017},
      eprint={1604.03466},
      archivePrefix={arXiv},
      primaryClass={math.GT}
}

@article {OStau,
    AUTHOR = {Ozsv\'{a}th, Peter and Szab\'{o}, Zolt\'{a}n},
     TITLE = {Knot {F}loer homology and the four-ball genus},
   JOURNAL = {Geom. Topol.},
  FJOURNAL = {Geometry and Topology},
    VOLUME = {7},
      YEAR = {2003},
     PAGES = {615--639},
      ISSN = {1465-3060},
   MRCLASS = {57R58 (57M25 57M27)},
  MRNUMBER = {2026543},
MRREVIEWER = {Stanislav Jabuka},
       DOI = {10.2140/gt.2003.7.615},
       URL = {https://doi.org/10.2140/gt.2003.7.615},
}

@article {combfloer,
    AUTHOR = {de Silva, Vin and Robbin, Joel W. and Salamon, Dietmar A.},
     TITLE = {Combinatorial {F}loer homology},
   JOURNAL = {Mem. Amer. Math. Soc.},
  FJOURNAL = {Memoirs of the American Mathematical Society},
    VOLUME = {230},
      YEAR = {2014},
    NUMBER = {1080},
     PAGES = {v+114},
      ISSN = {0065-9266},
      ISBN = {978-0-8218-9886-4},
   MRCLASS = {57R58},
  MRNUMBER = {3205426},
}

@article {petkovathin,
    AUTHOR = {Petkova, Ina},
     TITLE = {Cables of thin knots and bordered {H}eegaard {F}loer homology},
   JOURNAL = {Quantum Topol.},
  FJOURNAL = {Quantum Topology},
    VOLUME = {4},
      YEAR = {2013},
    NUMBER = {4},
     PAGES = {377--409},
      ISSN = {1663-487X},
   MRCLASS = {57M27},
  MRNUMBER = {3134023},
       DOI = {10.4171/QT/43},
       URL = {https://doi.org/10.4171/QT/43},
}

@article {Zhang,
    AUTHOR = {Zhang, Melissa},
     TITLE = {A rank inequality for the annular {K}hovanov homology of
              2-periodic links},
   JOURNAL = {Algebr. Geom. Topol.},
  FJOURNAL = {Algebraic \& Geometric Topology},
    VOLUME = {18},
      YEAR = {2018},
    NUMBER = {2},
     PAGES = {1147--1194},
      ISSN = {1472-2747},
   MRCLASS = {57M27 (57M25 57M60)},
  MRNUMBER = {3773751},
MRREVIEWER = {Mark Clifford Hughes},
       DOI = {10.2140/agt.2018.18.1147},
       URL = {https://doi.org/10.2140/agt.2018.18.1147},
}

@article {OSknot,
    AUTHOR = {Ozsv\'{a}th, Peter and Szab\'{o}, Zolt\'{a}n},
     TITLE = {Holomorphic disks and knot invariants},
   JOURNAL = {Adv. Math.},
  FJOURNAL = {Advances in Mathematics},
    VOLUME = {186},
      YEAR = {2004},
    NUMBER = {1},
     PAGES = {58--116},
      ISSN = {0001-8708},
   MRCLASS = {57M27 (57R58)},
  MRNUMBER = {2065507},
MRREVIEWER = {Stanislav Jabuka},
       DOI = {10.1016/j.aim.2003.05.001},
       URL = {https://doi.org/10.1016/j.aim.2003.05.001},
}

@book {Rasknot,
    AUTHOR = {Rasmussen, Jacob Andrew},
     TITLE = {Floer homology and knot complements},
      NOTE = {Thesis (Ph.D.)--Harvard University},
 PUBLISHER = {ProQuest LLC, Ann Arbor, MI},
      YEAR = {2003},
     PAGES = {126},
      ISBN = {978-0496-39374-9},
   MRCLASS = {Thesis},
  MRNUMBER = {2704683},
       URL =
              {http://gateway.proquest.com/openurl?url_ver=Z39.88-2004&rft_val_fmt=info:ofi/fmt:kev:mtx:dissertation&res_dat=xri:pqdiss&rft_dat=xri:pqdiss:3091665},
}

@article {BLH,
    AUTHOR = {Baldwin, John A. and Hedden, Matthew and Lobb, Andrew},
     TITLE = {On the functoriality of {K}hovanov-{F}loer theories},
   JOURNAL = {Adv. Math.},
  FJOURNAL = {Advances in Mathematics},
    VOLUME = {345},
      YEAR = {2019},
     PAGES = {1162--1205},
      ISSN = {0001-8708},
   MRCLASS = {57M27 (18G60 57R58)},
  MRNUMBER = {3903915},
MRREVIEWER = {Nikolai N. Saveliev},
       DOI = {10.1016/j.aim.2019.01.026},
       URL = {https://doi.org/10.1016/j.aim.2019.01.026},
}

@inproceedings {quasialt,
    AUTHOR = {Manolescu, Ciprian and Ozsv\'{a}th, Peter},
     TITLE = {On the {K}hovanov and knot {F}loer homologies of
              quasi-alternating links},
 BOOKTITLE = {Proceedings of {G}\"{o}kova {G}eometry-{T}opology {C}onference
              2007},
     PAGES = {60--81},
 PUBLISHER = {G\"{o}kova Geometry/Topology Conference (GGT), G\"{o}kova},
      YEAR = {2008},
   MRCLASS = {57M27},
  MRNUMBER = {2509750},
}

@misc{petkovamazur,
      title={Twisted {M}azur pattern satellite knots and bordered {F}loer theory}, 
      author={Ina Petkova and Biji Wong},
      year={2021},
      eprint={2005.12795},
      archivePrefix={arXiv},
      primaryClass={math.GT}
}

@article {Heddenwhitehead,
    AUTHOR = {Hedden, Matthew},
     TITLE = {Knot {F}loer homology of {W}hitehead doubles},
   JOURNAL = {Geom. Topol.},
  FJOURNAL = {Geometry \& Topology},
    VOLUME = {11},
      YEAR = {2007},
     PAGES = {2277--2338},
      ISSN = {1465-3060},
   MRCLASS = {57M27 (57R58)},
  MRNUMBER = {2372849},
MRREVIEWER = {Thomas E. Mark},
       DOI = {10.2140/gt.2007.11.2277},
       URL = {https://doi.org/10.2140/gt.2007.11.2277},
}

@article {geography,
    AUTHOR = {Hedden, Matthew and Watson, Liam},
     TITLE = {On the geography and botany of knot {F}loer homology},
   JOURNAL = {Selecta Math. (N.S.)},
  FJOURNAL = {Selecta Mathematica. New Series},
    VOLUME = {24},
      YEAR = {2018},
    NUMBER = {2},
     PAGES = {997--1037},
      ISSN = {1022-1824},
   MRCLASS = {57M27 (57R58)},
  MRNUMBER = {3782416},
MRREVIEWER = {David Shea Vela-Vick},
       DOI = {10.1007/s00029-017-0351-5},
       URL = {https://doi.org/10.1007/s00029-017-0351-5},
}

\end{document}